\theoremstyle{plain}
\newtheorem{theorem}{Theorem}[section]
\newtheorem{proposition}[theorem]{Proposition}
\newtheorem{lemma}[theorem]{Lemma}
\theoremstyle{remark}
\newtheorem{rem}[theorem]{Remark}
\theoremstyle{definition}
\newtheorem{assm}{Assumption}
\newcommand{\E}[2][]{\mathbb{E}_{#1} \left[ #2 \right]}
\newcommand{\abs}[1]{\left| #1 \right|}
\newcommand{\Prob}[2][]{\mathbb{P}_{#1} \! \left( #2 \right)}
\newcommand{\ind}[1]{\mathbbm{1}_{#1}}
\newcommand{\rb}[1]{\!\left( #1 \right)}
\newcommand{\cb}[1]{\! \left\{ #1 \right\}}
\newcommand{\sqb}[1]{\! \left[ #1 \right]}
\newcommand{\cost}[1]{\mathrm{Cost}\!\left(#1\right)}
\DeclareMathOperator{\tr}{tr}
\newcommand{\bN}{\mathbb{N}}
\newcommand{\bP}{\mathbb{P}}
\newcommand{\bR}{\mathbb{R}}
\newcommand{\cF}{\mathcal{F}}
\newcommand{\cI}{\mathcal{I}}
\newcommand{\cL}{\mathcal{L}}
\newcommand{\cO}{\mathcal{O}}
\newcommand{\cT}{\mathcal{T}}
\newcommand{\Dt}{\Delta t}
\newcommand{\mesh}[1]{\cT^{#1}}
\newcommand{\barX}{\overline{X}}
\newcommand{\tildeW}{\widetilde{W}}
\newcommand{\barW}{\overline{W}}
\newcommand{\rdt}{\mathrm{d}t}
\newcommand{\rdW}{\mathrm{d}W}
\newcommand{\rdX}{\mathrm{d}X}
\newcommand{\reach}{\mathrm{reach}}
\title[Higher-order adaptive methods for exit times]{Higher-order adaptive methods for exit times of It\^o diffusions} 
\author[H. Hoel]{H{\aa}kon Hoel${}^*$}
\thanks{$^*$Corresponding author: H. Hoel (haakonah@math.uio.no)}
\address{Department of Mathematics, University of Oslo, Oslo, Norway} 
\email{haakonah@math.uio.no}
\author[S. Ragunathan]{Sankarasubramanian Ragunathan}
\address{Chair of Mathematics for Uncertainty Quantification,
RWTH Aachen University, Aachen, Germany}
\email{sankarasubramanian.ragunathan@rwth-aachen.de}
\keywords{exit time, SDE, adaptive time-stepping, Itô--Taylor scheme, strong approximation, Feynman--Kac}
\subjclass[2020]{60H10, 60H30, 60H35, 65C30}
\begin{document}

    \begin{abstract}
            
        We construct a higher-order adaptive method for strong approximations of 
        exit times of Itô stochastic differential equations (SDE). The method employs a strong Itô--Taylor scheme for simulating SDE paths, and adaptively decreases the step size in the numerical integration as the solution approaches the boundary of the domain. These techniques turn out to complement each other nicely: adaptive time-stepping improves the accuracy of the exit time by reducing the magnitude of the overshoot of the numerical solution when it exits the domain, and higher-order 
        schemes improve the approximation of the state of the diffusion process. 
        
        We present two versions of the higher-order adaptive method. The first one uses 
        the Milstein scheme as numerical integrator and two step sizes for adaptive time-stepping: $h$ when far away from the boundary and $h^2$ when close to the boundary. 
        The second method is an extension of the first one using the strong It\^o--Taylor scheme of order 1.5 as numerical integrator and three step sizes for adaptive time-stepping. For any $\xi>0$, we prove that the strong error is $\cO(h^{1-\xi})$ and $\cO(h^{3/2-\xi})$ for the first and second method, respectively, and the expected computational cost for both methods is $\cO(h^{-1} \log(h^{-1}))$. 
        Theoretical results are supported by numerical examples, and we discuss the potential for extensions that improve the strong convergence rate even further.  
    \end{abstract}

    \maketitle

    \section{Introduction}\label{sec:introduction}

    For a bounded non-empty domain $D \subset \bR^d$ and a $d-$dimensional It\^o diffusion $(X_t)_{t\ge 0}$ 
    with $X(0) \in D$, the goal of this work is to construct efficient higher-order adaptive 
    numerical methods for strong approximations of the exit time 
    \begin{equation}\label{eq:exit-time}
    \tau := \inf\{t \ge 0 \mid X(t) \notin D \} \wedge T,
    \end{equation}
    where $T>0$ is given. The dynamics of the diffusion process is  
    governed by the autonomous It\^o stochastic differential equation (SDE)
    \begin{equation}\label{eq:SDE}
        \begin{split}
            \rdX &= a(X)\rdt+ b(X)\rdW\\
            X(0) &= x_0,
        \end{split}
    \end{equation}
    where $a \colon \mathbb{R}^{d} \to \mathbb{R}^{d}$ and $b \colon \mathbb{R}^{d} \to \mathbb{R}^{d \times m}$, $x_0$ is a deterministic 
    point in $D$, and $W$ is an $m$-dimensional standard Wiener process on a filtered probability space $(\Omega, \cF, (\cF_t)_{t \ge 0}, \bP)$. Further details on the regularity of the domain $D$ and the coefficients of the SDE are provided in Section~\ref{sec:notation-and-methods}.
   
    Our method employs a strong Itô--Taylor scheme for simulating SDE paths and carefully decreases the step size in the numerical integration as the solution approaches the boundary of the domain.  We present two versions: the order 1 method and the order 1.5 method. The order 1 method uses the Milstein scheme as numerical integrator and two step sizes for the adaptive time-stepping depending on the proximity of the state the boundary of $D$: a larger time-step when far away and a smaller time-step when close to $\partial D$. The order 1.5 method uses the strong It\^o--Taylor scheme of order 1.5 as numerical integrator and, as an extension of the previous method, three different step sizes for the adaptive time-stepping. 
    When adaptive time-stepping is properly balanced against the order of the scheme
    the following results hold for any $\xi>0$: the 
    order 1 method achieves the strong convergence rate $\cO(h^{1-\xi})$, cf. Theorem~\ref{thm:error-order1}; and the order 1.5 method achieves the strong convergence rate $\cO(h^{3/2-\xi})$, cf.~ Theorem~\ref{thm:error-order1.5}.
    Theorems~\ref{thm:cost-order1} and~\ref{thm:cost-order1.5} further show that
    both of the higher-order adaptive methods have an expected computational cost of $\cO(h^{-1} \log(h^{-1}))$.
   
    Exit times describe critical times in controlled dynamical systems, and they have applications in pricing of barrier options~\cite{alsmeyer1994markov}
    and also in American options~\cite{bayer2020pricing}, as the latter may be formulated as a control problem where exit times determine when to execute the option.
    They also appear in physics, for instance when studying the transition time between pesudo-stable states of molecular systems in the canonical ensemble~\cite{weinan2021applied}. 
    
    The mean exit time problem may be solved by the Monte Carlo approach of direct simulation of SDE paths or by solving the associated partial differential equation; the Feynman--Kac equation~\cite{friedmanVol1}. In high-dimensional state space, the former approach is more tractable than the latter, due to the curse of dimensionality. 
    A Monte Carlo method using the Euler--Maruyama scheme with a uniform timestep to simulated SDE paths was shown to produce the weak convergence rate 1/2 for approximating the mean exit time in~\cite{Gobet2000-rf}. An improved weak order 1 method was achieved  by reducing the overshoot error through careful shifting of the boundary, cf.~\cite{broadie1997continuity,Gobet2001-pr,gobet_exact_2004}. The weak order 1 method was extended to problems with time-dependent boundaries in~\cite{gobet_stopped_2010}, and~\cite{bouchard_first_2017} showed that in $L^1(\Omega)$-norm, the Euler--Maruyama scheme has order 1/2 convergence rate. The contributions of this work may be viewed as extensions of this strong convergence study to higher-order It\^o--Taylor schemes with adaptive time-stepping. 
    
    Multilevel Monte Carlo (MLMC) methods for mean exit times have been developed in~\cite{higham_mean_2013,giles2018multilevel}. Using the Euler--Maruyama scheme with boundary shifting and conditional sampling of paths near the boundary, the method~\cite{giles2018multilevel} can reach the mean-square approximation error $\cO(\epsilon^2)$ at the near-optimal computational cost $\cO(\epsilon^{-2} \log(\epsilon)^2 )$. Since our methods efficiently approximate the strong exit time and admit a pairwise coupling of simulations on different resolutions, they are also suitable for MLMC. See Section~\ref{sec:conclusion}, for an outline of the extension to MLMC. 
    
    Adaptive methods for SDE with discontinuous drift coefficients have been considered in~\cite{neuenkirch2019adaptive,yaroslavtseva2022adaptive}.
    Therein, the discontinuity regions of the drift coefficient are associated to hypersurfaces, and when one is close to such hypersurfaces, a small time-step is used, and otherwise a large time-step is used. This adaptive-time-stepping approach is similar to ours, as can be seen by viewing the boundary of $D$
    in our problem as a hypersurface, but the problem formulations differ, and our method is more general in the sense that it admits a sequence of time-step sizes and also higher-order numerical integrators. 
    For numerical testing of strong convergence rates, we have employed the idea of pairwise coupling of non-nested adaptive simulations of SDE developed in~\cite{gilesnonnested2016}, which is an approach that also could be useful for combining our method with MLMC in the future. See also~\cite{merleProhl2022} for a recent contribution on a posteriori adaptive methods for weak approximations of exit times and states of SDE, and
    ~\cite{hoel2014implementation,fang2020adaptive,katsiolides2018multilevel} for other partly state-dependent adaptive MLMC methods for weak approximations of SDE in finite- and infinite-time settings.

    As an alternative to simulating SDE paths, the Walk-On-Sphere (WoS) scheme~\cite{muller1956some} was devised to compute the solution to the Laplace equation on a bounded domain $D$ using Monte Carlo techniques. It was later extended to the walk on moving spheres method~\cite{deaconu2017walk} for simulating the exit time and position of a Wiener processes starting inside a bounded domain. 
    WoS requires closed-form expression or a tractable approximation of exit time distributions of spheres so that the exit time of the process can be generated from walking on spheres. But for more generic SDE, such closed-form expressions or tractable approximations do not exist, limiting the applicability of WoS.

    The rest of this paper is organized as follows: Section~\ref{sec:notation-and-methods} presents two versions of the higher-order adaptive method and states the main theoretical results on convergence and computational cost. Section~\ref{sec:theory} contains proofs of the theoretical results. Section~\ref{sec:numerics} presents a collection of numerical examples supporting our theoretical results. Lastly, we summarize our findings and discuss interesting open questions in Section~\ref{sec:conclusion}.

    \section{Notation and the adaptive numerical methods}\label{sec:notation-and-methods}
    In this section, we first introduce necessary notation and assumptions on the SDE coefficients and the domain $D$, and relate the exit time problem to the Feynman--Kac partial differential equation (PDE).  Thereafter, the higher-order adaptive methods of order 1 and 1.5 are respectively described in Sections~\ref{subsec:order_1_method} and~\ref{subsec:order_15_method}, with convergence and computational cost results. 
    
    \subsection{Notation and assumptions}\label{subsec:notAndAssumptions}
    
    For any $x,y \in \bR^d$, the Euclidean distance is denoted by $d(x,y) := |x-y|$, and 
    for any non-empty sets $A,B \subset \bR^d$ we define 
    \[
        d(A,B) := \inf_{(x,y) \in A \times B} d(x,y).
    \]  
    The following assumptions on the domain of the diffusion process will be needed to 
    relate the exit-time problem to a sufficiently smooth solution of a Feynman--Kac PDE.
    \begin{assm}\label{assm:domain}
          The domain $D \subset \bR^d$ is non-empty and bounded, and the boundary $\partial D$ is $C^{4}$ continuous. 
    \end{assm}
    Let $n_D :\partial D \to \bR^d$ denote the outward-pointing unit normal and for any $r>0$, let 
    \[
    D_r := \{ x \in \bR^d \mid  d(x,D) < r \}
    \]
    Due to the regularity of the boundary, it holds that $n_D \in C^{3}(\partial D,\bR^d)$ and $D$ satisfies the uniform ball property: there exists an $R>0$ such that for every $x \in \bR^n$ such that $d(x,\partial D)< R$, there is a unique nearest point $y$ on the boundary $\partial D$, and it holds that $x = y \pm d(x,y) n_D(y)$, where the sign $\pm$ depends on whether $x$ belongs to the exterior or interior of $D$. (That is, for every $x$ sufficiently close to the boundary, there is a unique point on $\partial D$ satisfying that
    $y = \arg \min_{z \in \partial D} d(x,z)$.) We refer to~\cite[Chapter 14.6]{gilbarg2015elliptic} for further details on the regularity of $\partial D$ and its unit normal $n_D$, and to~\cite[Theorem 1.9]{dalphin2014some} for the uniform ball property. For any domain $D$ satisfying the uniform ball property, let $\mathrm{reach}(D)> 0$ denote the supremum of all $R>0$ such that the property is satisfied. 
    The uniform ball property is equivalent to satisfying both the uniform exterior sphere- and interior shere condition.
    
    For any $r\in (0, \mathrm{reach}(D))$, the boundary of $D_r$ can be expressed by 
    \[
     \partial D_r = \bigcup_{y \in \partial D}  \, y + r n_D(y) ,
    \]
    and, since the mapping 
    \begin{equation}\label{eq:diffeomorphism}
    \partial D \ni y \mapsto \pi_r(y) = y + r n_D(y) \in \partial D_r
    \end{equation}
    is a $C^3$-diffeomorphism and $\partial D$ is $C^4$, it follows that $\partial D_r$ is at least $C^3$. 
    This implies that $D_r$ also satisfies the uniform ball property and since any point $z \in \partial D_r$
    can be expressed as $z= y + r n_D(y)$ for a unique $y \in \partial D$ and indeed $n_D(y) = n_{D_r}(z)$, it follows that
    the reach of $D_r$ is bounded from below by $\mathrm{reach}(D)-r$, which we state for later reference:
    \begin{equation}
     \mathrm{reach}(D_r) \ge \mathrm{reach}(D) -r \qquad \forall r \in [0,\reach(D))   
    \end{equation}
    A similar construction applies to the boundary of the subdomain
    $D_{-r} := \{x \in D| d(x, \partial D)>r \}$, where one can show that $\partial D_{-r}$ is at least $C^3$ for all $r \in (0,\mathrm{reach}(D))$.

    For any integer $k\ge 1$, let $C^{k}_b(\bR^d)$ denote the set of scalar-valued functions with continuous and uniformly bounded partial derivatives up to and including order $k$.
    We make the following assumptions on the coefficients of the SDE~\eqref{eq:SDE}.
    \begin{assm}\label{assm:coeffs}
        \hfill
        
        \begin{enumerate}[(\ref{assm:coeffs}.1)]
    
            \item For the SDE coefficients
            \[
            a(x) = \begin{bmatrix} 
            a_1(x) \\ \vdots \\ a_d(x) \end{bmatrix}
            \qquad \text{and} \qquad 
            b(x) = \begin{bmatrix} b_{11}(x) & \cdots & b_{1m}(x)\\
            \vdots & & \vdots \\
            b_{d1}(x)& \cdots & b_{dm}(x)
            \end{bmatrix}
            \]
            it holds that $a_i, b_{ij} \in C_b^{3}(\bR^d)$ for all $i \in \{1,\ldots,d\}$ and $j \in \{1,\ldots, m\}$.

            \item (Uniform ellipticity) For some $\bar R_D \in (0,\reach(D)/2)$, there exists a constant $\hat c_b>0$ such that
             \[
            \hat c_{b} \abs{\xi}^{2} \le \xi^{\top} (b(x) b^{\top}(x)) \xi \qquad \forall (x, \xi) \in  D_{\bar R_D} \times \bR^d. 
            \]
            
            \item There exists a constant $\widehat C_{b} > 0$ such that 
            \[ 
            \xi^{\top} \rb{b(x) b^{\top}(x)} \xi \le \widehat C_{b} \abs{\xi}^{2}, \qquad \forall (x,\xi)  \in \mathbb{R}^{d}\times \mathbb{R}^{d}. 
            \]
            
            
        \end{enumerate}
    \end{assm}
    \begin{rem}
        Assumption~\ref{assm:coeffs}.1 ensures well-posedness of 
        strong exact and numerical solutions of the SDE~\eqref{eq:SDE} and is sufficient to obtain the sought regularity for the solution of the Feynman--Kac PDE in Propositions~\ref{prop:fk1} and~\ref{prop:fk2}.

        \ref{assm:coeffs}.2 ensures that the diffusion process will, loosely speaking, eventually exit the domain, and it is used to obtain well-posedness of the related Feynman--Kac PDE in Propositions~\ref{prop:fk1} and~\ref{prop:fk2}.
        \ref{assm:coeffs}.3 is introduced to bound the magnitude of the overshoot of numerical paths of the diffusion process when they exit $D$.
     \end{rem}
     To simplify technical arguments, we will prove convergence results for our numerical methods under Assumption~\ref{assm:coeffs}, but the assumption can be relaxed considerably:
    \begin{rem}
        Since the quantity we seek to compute, $\tau$, only depends on the dynamics of the diffusion process until it exits $D$, Assumption~\ref{assm:coeffs}.1 can be relaxed to:
        \[
        \textbf{(B$'.1$)} \qquad a_i, b_{ij} \in C^{3}(\bR^d) \quad \text{for all} \quad  i \in \{1,\ldots,d\} \quad \text{and} \quad  j \in \{1,\ldots, m\}. 
        \]
        
        This is because any $C^{3}$-redefinition of all coefficients on $\overline D^C$ such that $a_i, b_{ij} \in C^{3}_b(\bR^d)$ will not change the exit time of the resulting diffusion process. 
        
        For instance, if 
        \[
        \max (|a_i(x)|, |\partial_{x_j} a_i(x)|,  |\partial_{x_jx_k} a_i(x)|, |\partial_{x_jx_k x_\ell} a_i(x)| )\le C(1+ |x|^n),
        \] 
        for some $n\ge 1$, then the redefinition 
        $\tilde a_i(x) = a_i(x) \exp(- d(x, \overline D^C)^4)$ satisfies 
        $\tilde a_i \in C_b^{3}(\bR^d)$.

        For any $R>0$, Assumption~\ref{assm:coeffs}.3 can be relaxed to: There exists a $\widehat C_b>0$ such that 
        \[ 
        \mspace{-120mu} 
        \textbf{(B$'.3$)} \qquad \xi^{\top} \rb{b(x) b^{\top}(x)} \xi \le \widehat C_{b} \abs{\xi}^{2}, \qquad \forall x \in \overline D_R, \; \xi \in \mathbb{R}^{d}. 
        \]
        The relaxation is achieved through a $C^{3}$-redefinition of $b$ on the exterior of $\overline D$ satisfying that
        \[ 
        \sup_{x \in \bR^d} \|b(x) b^{\top}(x)\|_2  = \max_{x \in \overline D_R} \|b(x) b^{\top}(x) \|_2 , 
        \]
        where $\| \cdot \|_2$ denotes the matrix $2$-norm.
    \end{rem}

    \subsection{Mean exit times and Feynman--Kac}
    Recalling that the exit time was defined by $\tau = \inf\{t \ge 0 \mid X(t) \notin D \} \wedge T$,
    we extend this notation to the time-adjusted exit time of a path going through the 
    point $(t,x)\in [0,T] \times \bR^d$:
    \begin{equation*}
        \tau^{t,x} \coloneqq \Big( \min\left\{s \geq t \; \big| \; X(s) \notin D \text{ and } X(t) = x\right\} \, - \, t \, \Big)\; \bigwedge \; \big(T - t\big) \, . 
    \end{equation*}
    Recalling that $X(0) = x_0$ for the diffusion process~\eqref{eq:SDE}, we note that $\tau = \tau^{0,x_0}$. Under sufficient regularity, the mean (time-adjusted) exit time 
    \begin{equation}\label{eq:mean-exit-time}
        u(t, x) \coloneqq \E{\tau^{t, x}}, \qquad (t,x) \in \sqb{0, T} \times \mathcal{D}
    \end{equation}
    is the unique solution of the following parabolic PDE:
    \begin{proposition}[Feynman--Kac]\label{prop:fk1}
    If Assumptions~\ref{assm:domain},~\ref{assm:coeffs}.1 and~\ref{assm:coeffs}.2 hold, then 
    the mean exit time~\eqref{eq:mean-exit-time} is the unique solution of the Feynman--Kac PDE
     \begin{align*}
         \partial_t u &=  - a \cdot \nabla u - \frac{1}{2} \tr\rb{bb^{\top} \nabla^{2} u} - 1   && \text{in}  \quad (0,T) \times D, \\
        u  &= 0  && \text{on} \quad  ([0,T) \times \partial D) \cup (\{T\} \times D) \, .
        \end{align*}
    Moreover, $u \in C^{1,2}([0,T]\times D) \cap C^{0,0}([0,T] \times \overline D)$.
    \end{proposition}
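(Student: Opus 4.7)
The plan is to first construct a classical solution $\tilde u$ of the Feynman--Kac PDE with the claimed regularity via standard parabolic theory, and then identify $\tilde u$ with $u$ through an Itô-formula argument along the stopped diffusion; uniqueness will then follow for free from the probabilistic representation.

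For existence and regularity, I would apply the time reversal $v(s,x):=\tilde u(T-s,x)$ to recast the PDE as the forward initial--boundary value problem
\begin{equation*}
\partial_s v = a\cdot\nabla v + \tfrac12 \tr(bb^\top \nabla^2 v) + 1 \;\;\text{in}\;\;(0,T)\times D,\quad v(0,\cdot)=0,\quad v|_{(0,T)\times\partial D}=0.
\end{equation*}
By Assumption~\ref{assm:coeffs}.1 the coefficients and source are H\"older continuous and bounded, by Assumption~\ref{assm:coeffs}.2 the elliptic operator is uniformly elliptic on a neighborhood of $\overline D$, by Assumption~\ref{assm:domain} the boundary $\partial D$ is $C^4$, and the homogeneous initial and Dirichlet data trivially match at the corner $\{0\}\times\partial D$. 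Classical parabolic Schauder theory (cf.~\cite{friedmanVol1}) then yields a unique classical solution $v\in C^{1,2}([0,T]\times D)\cap C^{0,0}([0,T]\times\overline D)$, and $\tilde u(t,x):=v(T-t,x)$ inherits the desired regularity.

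For the probabilistic identification, fix $(t,x)\in[0,T)\times D$, let $X=X^{t,x}$ solve the SDE with $X_t=x$, and introduce the localization
\begin{equation*}
\sigma_n := \inf\{s\ge t : X_s\notin D_{-1/n}\}\wedge(T-1/n)\wedge\tau^{t,x},
\end{equation*}
for each $n$ sufficiently large that $x\in D_{-1/n}$ and $t<T-1/n$. On $[t,\,t+\sigma_n]$ the path $(s,X_s)$ stays in a compact subset of $[0,T)\times D$ where $\tilde u$ is $C^{1,2}$ with bounded derivatives, so Itô's formula together with the PDE gives
\begin{equation*}
\tilde u(t+\sigma_n,X_{t+\sigma_n}) - \tilde u(t,x) = -\sigma_n + \int_t^{t+\sigma_n} \nabla\tilde u(s,X_s)^\top b(X_s)\,dW_s,
\end{equation*}
whose stochastic integral has zero expectation by the boundedness of $\nabla\tilde u$ and of $b$ on the localization set. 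Letting $n\to\infty$, path continuity gives $\sigma_n\nearrow\tau^{t,x}$ a.s., and continuity of $\tilde u$ on the compact set $[0,T]\times\overline D$ together with dominated convergence yields $\E{\tilde u(t+\tau^{t,x},X_{t+\tau^{t,x}})}=\tilde u(t,x)-\E{\tau^{t,x}}$. On $\{\tau^{t,x}<T-t\}$ the path exits through $\partial D$ where $\tilde u=0$, and on $\{\tau^{t,x}=T-t\}$ the terminal condition $\tilde u(T,\cdot)=0$ kills the remaining boundary term; hence $\tilde u(t,x)=\E{\tau^{t,x}}=u(t,x)$. Uniqueness in the stated regularity class follows by running the same argument against any competing classical solution.

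The main obstacle is the first step. Interior $C^{1,2}$ regularity is routine Schauder theory, but securing the $C^{0,0}$ extension up to the lateral boundary $[0,T]\times\partial D$ and the terminal face $\{T\}\times\overline D$ is more delicate: it relies on barrier constructions that exploit the exterior ball property implied by the $C^4$ smoothness of $\partial D$ (whence $\reach(D)>0$), together with the compatibility of the homogeneous data at the corner. A secondary technical nuisance in the second step is the passage to the limit in the localization: one must verify that $\nabla\tilde u$ is uniformly bounded on each compact subset $[0,T-1/n]\times\overline{D_{-1/n}}$ (which is automatic from $\tilde u\in C^{1,2}([0,T]\times D)$), so that the stochastic integral is a genuine martingale with vanishing mean along the localization sequence.
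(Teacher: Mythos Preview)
Your argument is correct and is the standard two-step Feynman--Kac proof (Schauder existence plus It\^o identification), but you should be aware that the paper does not prove this proposition at all: it simply records that the result is a direct consequence of Proposition~1 in~\cite{gobet_stopped_2010}. So your write-up is substantially more detailed than what the paper supplies, and there is nothing to compare against beyond that citation.

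One small notational slip worth fixing: as written, your localizing time $\sigma_n=\inf\{s\ge t:X_s\notin D_{-1/n}\}\wedge(T-1/n)\wedge\tau^{t,x}$ mixes an absolute time (the first two terms) with a duration (recall the paper defines $\tau^{t,x}$ as the exit time minus $t$), and you then evaluate at $t+\sigma_n$, which compounds the inconsistency. Either redefine $\sigma_n$ as a duration, $\sigma_n:=\big(\inf\{s\ge t:X_s\notin D_{-1/n}\}-t\big)\wedge(T-1/n-t)\wedge\tau^{t,x}$, or work throughout in absolute time and write $\tilde u(\sigma_n,X_{\sigma_n})$. This does not affect the substance of the argument.
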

    The result is a direct consequence of Proposition 1 in~\cite{gobet_stopped_2010}.

    To bound the overshoot of the exit time of numerical solutions, it is useful to study 
    time-adjusted exit times on domains $D_r$ for any $r \in [0,\bar R_D]$, where $\bar R_D$ 
    is defined in Assumption~\ref{assm:coeffs}.2. For any $(x,t) \in [0,T] \times D_r$, let 
    \[
     \tau^{t,x}_r \coloneqq \Big( \min\left\{s \geq t \; \big| \; X(s) \notin D_r \text{ and } X(t) = x\right\} \, - \, t \, \Big)\; \bigwedge \; \big(T - t\big) \, , 
    \]
    Similarly as for the exit time for the domain $D$, we introduce the shorthand $\tau_r := \tau_r^{0,x_0}$. The mean exit time for the enlarged domain is defined by 
    \begin{equation}\label{eq:mean-exit-time-enlarged-domain}
    u_r(t,x) := \E{\tau^{t,x}_r} \qquad (t,x) \in [0,T] \times \bR^d.
    \end{equation}
    The function $u_r(t,x)$ is also the unique solution of a Feynman--Kac equation:
    \begin{proposition}[Feynman--Kac on enlarged domains] \label{prop:fk2}
    Let Assumptions~\ref{assm:domain},~\ref{assm:coeffs}.1 and~\ref{assm:coeffs}.2 hold
    and let $r \in [0,\bar R_D]$. Then the mean exit time~\eqref{eq:mean-exit-time-enlarged-domain} for the enlarged domain $D_r$ is the unique solution of the Feynman--Kac PDE
     \begin{align*}
         \partial_t u_r &=  - a \cdot \nabla u_r - \frac{1}{2} \tr\rb{bb^{\top} \nabla^{2} u_r} - 1   && \text{in}  \quad (0,T) \times D_r, \\
        u_r  &= 0  && \text{on} \quad  ([0,T) \times \partial D_r) \cup (\{T\} \times D_r) \, ,
        \end{align*}
    and $u_r \in C^{1,2}([0,T]\times D_r) \cap C^{0,0}([0,T] \times \overline D_r)$.
    
    Moreover, 
    \begin{equation}\label{eq:ur-larger-u}
    u_r(t,x) \ge u(t,x) \qquad (t,x) \in [0,T] \times D.
    \end{equation}
    and there exists a uniform constant $L >0$ for all $r \in [0,\bar R_D]$ such that 
    \begin{equation}\label{eq:lipschitz-u}
        | u_r(t,x) - u_r(t,y) | \le L |x - y|  \quad \forall (t,x,y) \in [0,T] \times  D_r \times \partial D_r \, . 
    \end{equation}
    \end{proposition}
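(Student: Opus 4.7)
The plan is to prove the statement in three steps: parts (1) and (2), i.e., the PDE characterization and the regularity of $u_r$, follow directly from Proposition~\ref{prop:fk1} applied to the enlarged domain $D_r$; the monotonicity~\eqref{eq:ur-larger-u} is a pathwise comparison; and the uniform Lipschitz bound~\eqref{eq:lipschitz-u} is the main technical point, which I would establish via a classical barrier built from the signed distance to $\partial D_r$ whose parameters depend only on $\reach(D)$, $\bar R_D$ and the coefficient constants from Assumption~\ref{assm:coeffs}.

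For the PDE characterization and regularity, I would verify that the hypotheses of Proposition~\ref{prop:fk1} transfer to $D_r$ for every $r\in[0,\bar R_D]$. Boundedness of $D_r$ is clear; $\partial D_r$ inherits sufficient regularity via the $C^3$-diffeomorphism $\pi_r$ of~\eqref{eq:diffeomorphism}; Assumption~\ref{assm:coeffs}.1 and Assumption~\ref{assm:coeffs}.3 are unchanged; and since $\bar R_D < \reach(D)/2$ and $r\le\bar R_D$ we have $D_r\subset D_{\bar R_D}$, so Assumption~\ref{assm:coeffs}.2 yields uniform ellipticity on $D_r$ with the same constant $\hat c_b$. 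Applying Proposition~\ref{prop:fk1} to $D_r$ then gives both the PDE for $u_r$ and the regularity $u_r\in C^{1,2}([0,T]\times D_r)\cap C^{0,0}([0,T]\times \overline D_r)$. The monotonicity~\eqref{eq:ur-larger-u} is immediate: since $D\subset D_r$, any sample path exits $D_r$ no sooner than it exits $D$, so $\tau^{t,x}\le \tau^{t,x}_r$ pathwise for $(t,x)\in[0,T]\times D$, and taking expectation yields $u(t,x)\le u_r(t,x)$.

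The main obstacle is the uniform Lipschitz bound. Since $u_r(t,y)=0$ for $y\in\partial D_r$ and $|x-y|\ge d(x,\partial D_r)$, it suffices to show $u_r(t,x)\le L\, d(x,\partial D_r)$ for $(t,x)\in[0,T]\times D_r$ with $L$ independent of $r$. The crucial geometric input is that
\[
\reach(D_r)\ge \reach(D)-\bar R_D \ge \reach(D)/2 \qquad \forall\, r\in[0,\bar R_D],
\]
so the signed distance $\rho_r(x):=d(x,\partial D_r)$ is $C^2$ on the strip $S_r:=\{x\in D_r : \rho_r(x) < \reach(D)/4\}$, with $\|\rho_r\|_{C^2(S_r)}$ bounded by a constant depending only on $\reach(D)$. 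On $S_r$ I would construct the time-independent supersolution
\[
\phi(x) := C\bigl(1 - e^{-\alpha \rho_r(x)}\bigr),
\]
and, using $|\nabla\rho_r|=1$, the uniform ellipticity of $bb^\top$ and the uniform $C^2$-bound on $\rho_r$, choose $\alpha$ large enough that $-a\cdot\nabla\phi - \tfrac12\tr(bb^\top\nabla^2\phi) \ge 1$ on $S_r$, and then $C$ large enough that $\phi\ge T$ on $\{\rho_r = \reach(D)/4\}$; by construction $\phi = 0$ on $\partial D_r$. The parabolic comparison principle applied on $[0,T]\times S_r$ then yields $u_r(t,x)\le \phi(x) \le C\alpha\,\rho_r(x)$ on the strip, while on the complementary region $\{\rho_r \ge \reach(D)/4\}$ the trivial bound $u_r\le T\le (4T/\reach(D))\rho_r$ applies. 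Taking $L$ to be the maximum of the two constants gives~\eqref{eq:lipschitz-u}.

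The delicate part, and the point I expect to require care, is verifying that $C$ and $\alpha$ can be chosen \emph{independently of $r$}. This is exactly what the uniform lower bound on $\reach(D_r)$ provides: it guarantees that $\rho_r$ is $C^2$ on a strip of uniform width with $\|\nabla^2\rho_r\|_\infty$ bounded by a constant depending only on $\reach(D)$, so the barrier parameters can be fixed using only $\reach(D)$, $\hat c_b$, $\widehat C_b$, $\sup_{x\in\bR^d}|a(x)|$, and $T$, which yields a uniform Lipschitz constant $L$ as required.
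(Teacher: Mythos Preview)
Your proposal is correct. The PDE characterization, regularity, and the monotonicity~\eqref{eq:ur-larger-u} are handled exactly as in the paper. For the uniform Lipschitz bound~\eqref{eq:lipschitz-u}, however, the paper takes a different route: it first dominates the truncated mean exit time by the non-truncated one, $u_r(t,x)\le \bar u_r(x):=\E{\bar\tau_r^x}$, reduces to the \emph{elliptic} boundary-gradient estimate for $\bar u_r$, and then invokes \cite[Theorem~14.1]{gilbarg2015elliptic}, verifying that the constants $\mu$, $\chi_r$, $M_r$ appearing there are controlled uniformly in $r\in[0,\bar R_D]$ via $\reach(D_r)\ge\reach(D)/2$ and $M_r\le M_{\bar R_D}$. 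Your explicit exponential barrier built from the signed distance is essentially the mechanism behind that cited theorem, but you apply it directly to the parabolic problem without the elliptic detour. Your approach is more self-contained and makes the uniformity in $r$ transparent from the barrier parameters themselves; the paper's approach is shorter on the page but requires tracking constants through an external reference. (One minor slip: the proposition does not assume~\ref{assm:coeffs}.3, so your mention of it is superfluous; boundedness of $bb^\top$ already follows from~\ref{assm:coeffs}.1.)
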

    
    \begin{proof}
    Assumption~\ref{assm:domain} and the argument right below the assumption shows that 
    $\partial D_r$ is $C^3$. By~\cite[Proposition 1]{gobet_stopped_2010}, we then conclude that $u_r$ is the unique solution with the stated regularity. Inequality~\eqref{eq:ur-larger-u} follows from the observation
    that 
    \[
    \tau_r^{t,x} \ge \tau^{t,x} \qquad \forall (t,x) \in [0,T] \times \bR^d.
    \]
    
    For the last inequality, we note that for any $r \in [0,R_D]$, the non-truncated mean exit time 
    \[
    \bar u_r(x) := \E{\bar \tau_r^x}, 
    \]
    with $\bar \tau_r^x := \min\{t \ge 0 \mid X(s) \notin D_r \; \& \; X(0) = x \}$
    is the unique solution of the strongly elliptic PDE
    \begin{equation}\label{eq:familyPDE}
    \begin{split}
     a \cdot \nabla \bar u_r + \frac{1}{2} \tr\rb{bb^{\top} \nabla^{2} \bar u_r}  + 1 & = 0 \qquad \text{in} \quad D_r \\
     \bar u_r &= 0 \qquad \text{on} \quad \partial D_r,
        \end{split}
    \end{equation}
    and $\bar u_r \in C^2(\overline D_r)$, cf.~\cite[Theorem 6.5.1]{friedmanVol1} and~\cite[Theorem 6.14]{gilbarg2015elliptic}. 
    Since $u_r(t,y) = \bar u_r(y) = 0$ for all $y \in \partial D_r$ and  
    \[
    |u_r(t,x) - u_r(t,y)| = u_r(t,x) - \underbrace{u_r(t,y)}_{=0} \stackrel{\eqref{eq:ur-larger-u}}{\le} \bar u_r(x) - \underbrace{\bar u_r(y)}_{=0} = |\bar u_r(x) - \bar u_r(y)|,
    \]
    and it remains to bound the term in the last equation.   
    By the connection of modulus of continuity~\cite[Section~14.5]{gilbarg2015elliptic} with 
    gradient bounds on the boundary, \cite[Theorem 14.1]{gilbarg2015elliptic} applied to 
    the PDE~\eqref{eq:familyPDE} yields
    \[
        |\bar u_r(x) - \bar u_r(y)| \le \underbrace{\mu e^{\chi_r M_r}}_{=:L_r} |x - y|  \qquad \forall (x,y) \in D_r \times \partial D_r,
    \]
    where $M_r = \sup_{x \in \overline D_r} |\bar u_r(x)|$, 
    \[
    \mu = \frac{1+ \widehat C_b + \sup_{x \in D_{\bar R_D}} |a(x)|}{\hat c_b}, \quad \text{and} \quad   
    \chi_r = \left(1 + \frac{d-1}{\reach(D_r)} \right) \mu.
    \]
    For any $ 0 \le r< s\le \bar R_D$, it holds that $u_s \ge u_r \ge 0$ on $\overline D_r$, 
    which implies that $M_r \le M_{\bar R_D}$. 
    And from Section~\ref{subsec:notAndAssumptions} and Assumption~\ref{assm:coeffs}.2, we know that 
    that $\reach(D_r) \ge \reach(D)/2>0$ for all $r \in [0,\bar R_D]$, which implies that  
    \[
    \chi_r \le \left(1 + \frac{2(d-1)}{\reach(D)} \right) \mu =: \bar \chi \quad \forall r \in [0, \bar R_D].
    \]  
    We conclude that  $L_r \le \mu \exp( \bar \chi M_{\bar R_D}) =: L < \infty$ for all $r \in [0, \bar R_d]$.
    %
    %

\end{proof}

    \begin{rem}
    Existence, uniqueness and regularity of the solution to the elliptic PDE can be shown to hold under weaker regularity assumptions on coefficients, domain and boundary values \cite[Theorems 6.1, 6.2, 6.3]{gilbarg1980intermediate}.
    \end{rem}

\subsection[]{Higher-order adaptive numerical methods}\label{subsec:Ito-Taylor}
    
    Let $\gamma \in \{1,3/2\}$ denote the order of the strong It\^o--Taylor method used in the numerical integration of the SDE~\eqref{eq:SDE}, cf.~\cite[Chapter 10]{kloeden1992stochastic}.
    In abstract form, our numerical method simulates the SDE~\eqref{eq:SDE} by the scheme
    \begin{equation}\label{eq:ito-taylor-scheme}
    \barX(t_{n+1}) = \Psi_{\gamma}(\barX(t_n), \Delta t_n), \qquad n=0,1,\ldots  
    \end{equation}
    where $\barX(0) = x_0$ and $\Psi_\gamma: \bR^d \times [0,\infty) \times \Omega \to \bR^d$ denotes the higher-order It\^o--Taylor scheme. The timestep $\Delta t_n = \Delta t(\barX(t_n))$ is adaptively chosen as a function of the current state $\barX(t_n)$ so that the step size is small when $\barX(t_n)$ is near the boundary $\partial D$, and larger otherwise. Both 
    the integrator $\Psi_\gamma$ and the adaptive time-stepping depend on the order of $\gamma$, see Sections~\ref{subsec:order_1_method} and~\ref{subsec:order_15_method} below for further details. The purpose of the adaptive strategy is to reduce the magnitude of the overshoot when the numerical solution exits $D$, and the stochastic mesh
    \[
    t_0 = 0, \quad \text{and} \quad t_{n+1} = t_n + \Delta t_n \qquad n =0,1,\ldots
    \]
   is a sequence of stopping times with $t_{n+1}$ being  $\cF_{t_n}-$measurable for all $n=0,1,\ldots$.
   The exit time of the numerical method is defined as the first time $\barX(t_n)$ exits the domain $D$:
    \begin{equation}\label{eq:exit-time-numerical}
    \nu := \{t_n\ge 0 \mid \barX(t_n) \notin D\} \wedge T, 
    \end{equation}
    and we will use the following notation for the time mesh 
    of the numerical solution up to or just beyond $T$:
    \begin{equation}
        \label{eq:adpt_mesh}
        \mesh{\Dt} := \{t_0, t_1, \ldots, t_{N}\}
    \quad \text{where} \quad 
    N := \min\{n \in \bN \; | \; t_n \ge T \}.
    \end{equation}
    For later analysis, the domain of definition for the numerical solution is extended to a piecewise 
    constant solution over continuous time by
    \[
    \barX(t) := \barX(t_n) \qquad \text{for} \quad t \in [t_n,t_{n+1}) 
    \quad \text{and} \quad n \in \{0,1,\ldots, N-1\}.  
    \]
    Sections~\ref{subsec:order_1_method} and~\ref{subsec:order_15_method} below present the details for the order 1 and order 1.5 adaptive methods, respectively.

    \subsection{Order 1 method}
    \label{subsec:order_1_method}
    The $i$-th component of the strong It\^o--Taylor scheme~\eqref{eq:ito-taylor-scheme} of 
    order $\gamma=1$ is given by 
    \begin{equation}\label{eq:order1-scheme}
        \begin{split}
            \Big(\Psi_{1}(\barX(t_{n}), \Delta t_n ) \Big)_i  \coloneqq \barX_{i}(t_{n}) &+ a_{i}(\barX(t_{n})) \Delta t_n + \sum_{j=1}^{m} b_{ij}(\barX(t_{n})) \Delta W^j_n   \\ &+ \sum_{j_{1}=1}^m \sum_{j_2=1}^{m} \mathcal{L}^{j_{1}} b_{ij_{2}}(\barX(t_{n})) \mathcal{I}_{(j_{1}, j_{2})},
        \end{split}
    \end{equation}
    for $i \in \{1,\ldots, d\}$,
    where we have introduced the shorthand $\Delta W^j_n = W^j(t_{n+1}) - W^j(t_n)$
    for the $n$-th increment of the $j$-th component of the Wiener process,
    the operator
    \begin{equation}\label{eq:Lj-operator}
    \cL^j := \sum_{j=1}^d b_{ij} \partial_{x_i}
    \end{equation}
    and the iterated It\^o integral
    \begin{equation}\label{eq:iterated-ito}
         \mathcal{I}_{(j_{1}, j_2)} := \int_{t_{n}}^{t_{n+1}} \int_{t_{n}}^{s_{2}}\rdW^{j_{1}}(s_1)\rdW^{j_{2}}(s_2)
    \end{equation}
    of the components $(j_1,j_2) \in \{1,\ldots,m\} \times \{1,\ldots,m\}$. 
    
    \begin{rem}
    The iterated integrals $\cI_{(j_1, j_2)}$ do not have a closed-form expression when $j_1\neq j_2$, and numerical approximations of such terms impose a substantial 
    cost to each iteration of $\Psi_1$ in~\eqref{eq:order1-scheme}. The cost of evaluating
    $\Psi_1$ reduces to $\cO(1)$ in settings when the off-diagonal terms of $\cI_{(j_1, j_2)}$ cancel; for instance, when the \emph{first commutativity condition} holds~\cite[equation (10.3.13)]{kloeden1992stochastic}:
    \begin{align}
        \label{eq:first_comm}
          \cL^{j_1}  b_{ij_{2}} = \cL^{j_2} \partial_{x_i} b_{ij_{1}}, \qquad 
        \forall \big( \; j_{1}, j_{2} \in \cb{1, \ldots, m} \text{ and } i \in \{1, \ldots, d\} \; \big).
    \end{align}
    \end{rem}
    
    The size of $\Delta t_n$ is determined adaptively by the state of the numerical solution. A small step size is employed when $\barX(t_n)$ is close to the boundary $\partial D$, to reduce the magnitude of the overshoot of an exit of the domain and the likelihood of the numerical solution not capturing a true exit of the domain; and a larger step size is employed when $\barX(t_n)$ is farther away from the boundary. For any $b>a\ge0$, we introduce the following notation to describe the distance from the boundary:
    \begin{align}
        \label{eq:crit_reg_part}
        V_{\partial D}(a, b) \coloneqq \cb{x \in D \; \big| \; d(x, \partial D) \in (a, b]}.
    \end{align} 
    Introducing the step size parameter $h \in (0,1)$ and the threshold parameter
    \begin{equation}\label{eq:delta-order1}
    \delta(h) :=  \sqrt{8 \widehat C_b d h \log(h^{-1})},
    \end{equation}
    the "critical region" of points near the boundary is given by
    \[
        V_{\partial D}(0, \delta) \coloneqq \cb{x \in D \; \big| \; d(x, \partial D) \le \delta},
    \]
    and the adaptive time-stepping by
    \begin{equation}\label{eq:adaptive-order-1}
      \Delta t_n = \Delta t(\barX(t_n)) \coloneqq
      \begin{cases}
          h & \text{if} \quad d(\barX(t), \partial D) > \delta, \\
          h^2 & \text{otherwise.}  
      \end{cases}
  \end{equation}
  This means that a large timestep is used when the $\barX(t_n)$ is
  in the non-critical region $D\setminus V_{\partial D}(0, \delta)$
  and a small step size $h^{2}$ in the critical region near the
  boundary. (The step size used in $D^C$, whether $h$
    or $h^2$, is not of any practical importance for the output of the
    numerical method. But in our theoretical analysis we need to
    compute the numerical solution up to time $T$, and in case
    $\nu < T$, the step size for $\barX(t_n) \in D^C$ needs to be
    described to compute the solution for times in $(\nu,T]$.) The
    value of the parameter $\delta$ is chosen as a compromise between
    accuracy and computational cost: it should be sufficiently large
    so that the strong-error convergence rate in $h$ is kept at almost
    order 1, cf.~Theorem~\ref{thm:error-order1}, but it should also be
    as small as possible to keep the computational cost of the method
    low. From the proofs of Lemma~\ref{lem:strides-order1} and
    Theorem~\ref{thm:cost-order1}, it follows that the
    formula~\eqref{eq:delta-order1} is a suitable compromise for
    $\delta$.

    We are now ready to present the main results on the strong convergence rate and computational cost of the order 1 method.
    \begin{theorem}[Strong convergence rate for the order 1 method]\label{thm:error-order1}
    If Assumptions~\ref{assm:domain} and~\ref{assm:coeffs} hold, then for any for any $\xi >0$ there exists a constant $C_{\nu} >0$ such that 
    \[
    \E{ |\nu - \tau| }  \le C_{\nu} h^{1-\xi}.  
    \]
    holds for a sufficiently small $h>0$. 
    \end{theorem}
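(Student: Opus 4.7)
The plan is to split
\[
\E{|\nu - \tau|} \le \E{(\nu-\tau)_+} + \E{(\tau-\nu)_+}
\]
and bound each summand by a spatial-to-temporal transfer based on the Feynman--Kac representations (Propositions~\ref{prop:fk1} and~\ref{prop:fk2}). The three ingredients will be (i) an overshoot estimate for $\barX$ at the numerical exit time, (ii) the order-$1$ strong approximation of the Milstein scheme on the adaptive mesh $\mesh{\Dt}$, and (iii) the Lipschitz bound~\eqref{eq:lipschitz-u}.

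First I would establish the overshoot estimate. An exit via a large step of size $h$ must start from $D \setminus V_{\partial D}(0,\delta)$ and therefore requires a Milstein increment of magnitude at least $\delta$. The Gaussian tail of the diffusion component, the uniform bound $\widehat C_b$ from Assumption~\ref{assm:coeffs}.3, a union bound over the at most $\cO(h^{-1})$ large steps, and the calibrated choice $\delta = \sqrt{8 \widehat C_b d\, h \log(h^{-1})}$ together make the probability that the exiting step has length $h$ (a ``skip over the critical strip'') bounded by $\cO(h^p)$ for any $p\ge 1$. On the complementary typical event the exiting step has length $h^2$, with drift contribution $\cO(h^2)$ and Brownian contribution $\cO(h)$, so that $d(\barX(\nu), \partial D) = \cO(h)$ with overwhelming probability. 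Lemma~\ref{lem:strides-order1} provides a closely related tail estimate and also bounds the expected number of mesh points by $\cO(h^{-1}\log h^{-1})$.

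Next I would combine this with the strong convergence of $\barX$ to $X$: the one-step $L^2$ Milstein error of order $\Delta t_n$ per stride, followed by a discrete Gr\"onwall argument along $\mesh{\Dt}$, yields
\[
\E{|X(t) - \barX(t)|^2}^{1/2} \le C h^{1-\xi/2} \qquad \text{for } t \in \mesh{\Dt},\; t \le \nu \wedge \tau,
\]
where the $h^{-\xi/2}$ absorbs polylogarithmic contributions from $\delta$ and from the random mesh cardinality. To transfer spatial to temporal closeness, on the event $\cb{\nu < \tau}$ I would condition on $\cF_\nu$ and use the strong Markov property to obtain $\E{\tau - \nu \mid \cF_\nu} = u(\nu, X(\nu))$; since $X(\nu) \in D$ lies within $\cO(h^{1-\xi/2})$ of $\partial D$ (overshoot plus strong error), the Lipschitz bound~\eqref{eq:lipschitz-u} with $r = 0$ gives the required estimate. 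The event $\cb{\tau < \nu}$ is handled analogously on a slightly enlarged domain $D_r$ with $r$ of the order of the overshoot, using $u_r$ in place of $u$.

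The main obstacle I expect is the simultaneous control of (i) the Milstein strong error on a path-dependent random mesh and (ii) the rare ``skip over the critical strip'' event. The logarithmic correction in $\delta$ is precisely what converts the bare $\cO(h)$ overshoot scale into an $\cO(h^{1-\xi})$ rate for any $\xi>0$, at the cost of absorbing higher powers of $\log(h^{-1})$ into the constant $C_\nu$.
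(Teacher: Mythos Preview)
Your overall architecture --- split $\E{|\nu-\tau|}$ into the two one-sided pieces and convert spatial closeness to temporal closeness via the Feynman--Kac Lipschitz bound~\eqref{eq:lipschitz-u} --- is exactly the paper's approach. The treatment of $\{\nu<\tau\}$ is also essentially the same, though you work harder than necessary: since $\barX(\nu)\in D^C$ while $X(\nu)\in D$, some $y\in\partial D$ lies on the segment between them, so $|X(\nu)-y|\le|X(\nu)-\barX(\nu)|$ already gives $u(\nu,X(\nu))\le L\,\E{|X(\nu)-\barX(\nu)|}=\cO(h)$ by Proposition~\ref{prop:conv-rate-strong}. No separate overshoot bound for $\barX$ is needed here.

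The gap is in ``handled analogously'' for $\{\tau<\nu\}$. The two cases are \emph{not} symmetric: $u$ and $u_r$ are mean exit times of the \emph{exact} process, so conditioning on $\cF_\tau$ and invoking $u_r$ bounds only $\E{\tau_r-\tau}$, not $\E{\nu-\tau}$. You still need to show that $\nu\le\tau_r$ with high probability, and this is where the real work lies. The paper sets $r=h^{1-\xi}$, lets $t^*=\max\{t_k\in\mesh{\Dt}:t_k\le\tau_r\}$, and argues in two passes: first, Lemma~\ref{lem:strides-order1} (a stride bound for $X$, not for $\barX$) gives $|X(t^*)-X(\tau_r)|\le\delta$ on a set of probability $1-\cO(h)$, which together with $|\barX(t^*)-X(t^*)|<r$ forces $\barX(t^*)$ into the critical strip and hence $\Delta t(\barX(t^*))=h^2$; second, with the step at $t^*$ now known to be $h^2$, Proposition~\ref{prop:errorInterpolation} upgrades the stride to $|X(t^*)-X(\tau_r)|=\cO(h)$, giving $d(\barX(t^*),\partial D_r)<r$ and therefore $\barX(t^*)\notin D$, i.e.\ $\nu\le t^*\le\tau_r$. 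Your overshoot estimate for $\barX$ at its own exit time $\nu$ does not supply this; you need the stride control of the \emph{exact} process over \emph{every} mesh interval, which is precisely what Lemma~\ref{lem:strides-order1} delivers and what your sketch does not invoke in the right place.

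Two minor points: Lemma~\ref{lem:strides-order1} does not bound the mesh cardinality (that is Theorem~\ref{thm:cost-order1}); and Proposition~\ref{prop:conv-rate-strong} already gives order $h$ on the adaptive mesh, so there is no need to settle for $h^{1-\xi/2}$ in the strong error.
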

    We defer the proof to Section~\ref{subsec:proofs-order1}.

    To bound the computational cost, we first
    define the cost of a numerical solution in terms of the number time-steps used: 
    \begin{align*}
        \cost{\barX} \coloneqq \int_{0}^{\nu} \frac{1}{\Dt(\barX(t))}\rdt
    \end{align*}
    Recall that the exit time $\nu$ is different for each realization
    of the numerical solution. We make the implicit assumption that
    every evaluation of the distance of the numerical solution
    $\barX(t)$ to the boundary of the domain $\partial D$, required
    for adaptive refinement of time-step size, costs $\cO(1)$.
    
    \begin{theorem}[Computational cost for the order 1 method]\label{thm:cost-order1}
      Let Assumptions~\ref{assm:domain} and~\ref{assm:coeffs} hold and assume that one evaluation of $\Psi_1$ costs $\cO(1)$. 
      Then it holds that
      \[
        \E{\cost{\barX}} = \cO( h^{-1} \log(h^{-1}) ). 
      \]
    \end{theorem}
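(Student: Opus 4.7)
The plan is to rewrite $\cost{\barX}$ as a count of time-steps and then control the number of small-size steps via a bound on the expected occupation time of $\barX$ in the critical strip. On each mesh interval $[t_n, t_{n+1})$, the integrand $1/\Delta t(\barX(t))$ equals $1/\Delta t_n$, so the contribution of each full mesh interval to $\cost{\barX}$ is exactly one. Writing $N_{\mathrm{big}}$ and $N_{\mathrm{small}}$ for the number of steps of size $h$ and $h^2$, respectively, taken on $[0,\nu]$, we have $\cost{\barX} \le N_{\mathrm{big}} + N_{\mathrm{small}}$. Each big step advances time by $h$ and the total time spent on big steps is at most $\nu \le T$, so the deterministic bound $N_{\mathrm{big}} \le T/h = \cO(h^{-1})$ is immediate, reducing the theorem to an estimate of $\E{N_{\mathrm{small}}}$.

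Because small steps advance time by $h^2$ and are taken exactly when $\barX(t_n) \in V_{\partial D}(0,\delta)$, we have the identity
\[
h^2 N_{\mathrm{small}} = \int_0^\nu \ind{\barX(t) \in V_{\partial D}(0, \delta)}\, dt,
\]
so the target $\E{N_{\mathrm{small}}} = \cO(h^{-1}\log(h^{-1}))$ is equivalent to the occupation-time estimate
\[
\E{\int_0^\nu \ind{\barX(t) \in V_{\partial D}(0,\delta)}\, dt} = \cO(\delta^2) = \cO(h \log(h^{-1})).
\]
To establish this, I would introduce the Feynman--Kac-type function
\[
\phi(t,x) := \E{\int_t^{\tau^{t,x}_{\bar R_D}} \ind{X(s) \in V_{\partial D}(0, \delta)}\, ds}
\]
for the true diffusion $X$ on the enlarged domain $D_{\bar R_D}$, which, by an argument analogous to Proposition~\ref{prop:fk2}, solves a backward parabolic PDE with source $-\ind{V_{\partial D}(0,\delta)}$ and zero lateral/terminal data. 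Using uniform ellipticity of $bb^\top$ on $D_{\bar R_D}$ together with a barrier built from the signed distance $d(\cdot,\partial D)$, one obtains $\|\phi\|_\infty \le C\delta^2$; heuristically, the elliptic Green's function of $L$ at a point $y$ near $\partial D$ scales like $d(y,\partial D)$, so integrating over a strip of width $\delta$ yields the quadratic dependence. The bound is then transferred from $X$ to $\barX$ via a discrete Itô-type identity applied to $\phi(t_n, \barX(t_n))$: the Milstein scheme matches the generator of $X$ up to a local remainder of order $\Delta t_n^2$, and the accumulated remainder over the at most $\cO(h^{-1}\log(h^{-1}))$ mesh points is of lower order than $\delta^2$.

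The principal obstacle is the transfer step. Because the source $\ind{V_{\partial D}(0, \delta)}$ is discontinuous, $\phi$ belongs only to $W^{2,p}$ and not to $C^2$, so the standard Itô formula cannot be applied directly. A clean workaround is to mollify the indicator on a spatial scale comparable to $h$, expand the mollified PDE solution along the scheme by Taylor expansion, and control the mollification error using elementary Gaussian tail estimates for the Milstein increment; this is precisely where the choice $\delta = \sqrt{8 \widehat C_b d h \log(h^{-1})}$ from~\eqref{eq:delta-order1} enters, ensuring that the probability of misclassifying the critical region across a single mesh interval is summably small in $h$. A secondary technicality is that $\barX$ may briefly overshoot $\partial D$, which is why $\phi$ is posed on $D_{\bar R_D}$ rather than on $D$ itself, leveraging the uniform ellipticity guaranteed on $D_{\bar R_D}$ by Assumption~\ref{assm:coeffs}.2.
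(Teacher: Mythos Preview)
Your overall strategy---reduce to an occupation-time bound $\cO(\delta^2)$ for the critical strip---is exactly what the paper does. The two arguments diverge in how the bound is transferred between $\barX$ and $X$, and that is where your sketch has a real gap.

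The paper performs the transfer \emph{probabilistically}, before any PDE is invoked. On the high-probability event $A\cap B$, where $A$ is the maximal-stride set of Lemma~\ref{lem:strides-order1} and $B=\{\max_{t_k\in\mesh{\Dt}}|X(t_k)-\barX(t_k)|\le\delta\}$, one has both $\nu\le\tau_{2\delta}$ and $\{d(\barX(s_n),\partial D)\le\delta\}\subset\{d(X(s_n),\partial D)\le 2\delta\}$. This reduces everything to the probability that the \emph{true} process $X$, not yet absorbed at $\partial D_{2\delta}$, lies in the $2\delta$-strip around $\partial D$. That probability is read off from the forward absorbing Fokker--Planck density $p(t,\cdot)$: a boundary Lipschitz estimate (Lemma~\ref{lem:gradPBound}) gives $p\le C\delta$ on $\Gamma_{2\delta}$, and the co-area formula integrates this over a strip of width $4\delta$ to produce the $\cO(\delta^2)$ factor. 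No discrete It\^o formula and no regularity of a backward solution are needed.

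Your route---posing a backward Feynman--Kac function $\phi$ with source $-\ind{V_{\partial D}(0,\delta)}$ and expanding $\phi(t_n,\barX(t_n))$ along the scheme---runs into the difficulty you yourself flag, and the mollification fix is not sufficient as stated. The local Milstein remainder is $\cO(\Delta t_n^2)$ only up to a constant involving third derivatives of $\phi$; after mollifying the indicator on scale $h$, those derivatives are of order at least $h^{-1}$ near the strip boundary, so the accumulated remainder is no longer of lower order than $\delta^2$. (Separately, invoking ``at most $\cO(h^{-1}\log(h^{-1}))$ mesh points'' presupposes the conclusion; the only a priori bound is $T/h^2$, though $\sum_n\Delta t_n^2\le hT$ is available without any count.) The paper's probabilistic transfer sidesteps all of this: once you are on $A\cap B$ and working with $X$, the only PDE input required is a one-sided Lipschitz bound for the Fokker--Planck density at the absorbing boundary, which holds classically.
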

    We defer the proof to Section~\ref{subsec:proofs-order1}.
    
    In the special case where the It\^o-diffusion process is a standard one-dimensional Wiener process, the upper bound on the computational cost can be proven by a more fundamental approach that we outline in Appendix~\ref{appendix:a}.

    \begin{rem}
      For an SDE with low-regularity drift functions, the occupation-time on discontinuity sets is used to bound the
      computational cost of the adaptive method
      in~\cite{yaroslavtseva2022adaptive,neuenkirch2019adaptive}. When
      $d=1$, the approach~\cite{yaroslavtseva2022adaptive} carries
      over to our order 1 method, but it is an open question whether the
      approach~\cite{yaroslavtseva2022adaptive} extends to settings
      with $d>1$ for the order 1 method, and to the order 1.5 method
      in general. 
    \end{rem}
    
    \subsection{The order 1.5 method}
    \label{subsec:order_15_method}
    The general strong It\^o--Taylor scheme of order $\gamma=1.5$ is a complicated 
    expression that can be found 
    in~\cite[equation (10.4.6)]{kloeden1992stochastic}. When the so-called 
    \emph{second commutativity condition} holds: 
    \begin{equation}\label{eq:second-commutativity-condition}
        \mathcal{L}^{j_{1}} \mathcal{L}^{j_{2}} b_{i j_{3}} = \mathcal{L}^{j_{2}} \mathcal{L}^{j_{1}} b_{i j_{3}} \qquad \forall\Big( j_{1}, j_{2}, j_{3} \in \{1, \ldots, m\} \text{ and } i \in \{1, \ldots, d\}\Big),
    \end{equation}
    where the differential operator $\cL^j$ is defined in~\eqref{eq:Lj-operator},
    then the scheme simplifies to a practically useful form 
    where the computational cost of one iteration of $\Psi_{1.5}$ is $\cO(1)$, cf.~\cite[equation (10.4.15)]{kloeden1992stochastic}. 
    And in the special case of~\eqref{eq:second-commutativity-condition} when the
    diffusion coefficient is a diagonal matrix, the $i$-th component of the 
    scheme takes the form
    \begin{equation}\label{eq:order15-scheme}
        \begin{split}
            \Big(\Psi_{1.5}(\barX(t_{n}), \Delta t_n ) \Big)_i  \coloneqq &
            \barX_i(t_n) + a_i(\barX(t_n)) \Dt_n + b_{ii}(\barX(t_n)) \Delta W^i_n\\
            &+ \frac{1}{2} \cL^0 a_i(\barX(t_n)) \Dt_n^2 
            + \frac{1}{2} \cL^i b_{ii}(\barX(t_n)) \big( (\Delta W^i_n)^2 - \Dt_n \big)\\
            & + \cL^0 b_{ii}(\barX(t_n)) \big(\Delta W^i_n \Dt_n - \Delta Z^i_n\big)
            + \cL^i a_i(\barX(t_n)) \Delta Z_n^i \\
            &+ \frac{1}{2} \cL^i \cL^i b_{ii}(\barX(t_n))\Big(  \frac{(\Delta W_n^i)^2}{3} - \Delta t_n\Big)\Delta W_n^i
        \end{split}
    \end{equation}
    where we have introduced the differential operator
    \[
    \cL^0  := \sum_{i=1}^{d} a_{i} \partial_{x_{i}} + \frac{1}{2} \sum_{i,j=1}^{d} \sum_{k=1}^{m} b_{ik} b_{jk} \partial_{x_i x_j} 
    \]
    and
    \[
    \Delta Z^i_n := \int_{t_n}^{t_{n+1}} \int_{t_n}^{s_2}\rdW^i(s_1) \, \mathrm{d}s_2.  
    \]
    For computer implementations, let us add that the tuple of correlated random variables $(\Delta W^i_n, \Delta Z_n^i)$ can be generated by
    \[
    \Delta W^i_{n} = U_1 \sqrt{\Dt_n} \quad \text{and} \quad \Delta Z_n^i = \frac{1}{2} \Dt_n^{3/2} \Big( U_1 + \frac{1}{\sqrt{3}} U_2\Big), 
    \]
    where $U_1$ and $U_2$ are independent $N(0,1)$-distributed random variables.

    The step size $\Delta t_n$ for the order 1.5 method is determined adaptively by the state of the numerical solution, but with one more resolution than for the oder 1 method: A tiny step size is employed when $\barX(t_n)$ is very close to the boundary $\partial D$, a small step size is employed when $\barX(t_n)$ is slightly farther away from the boundary, and the largest step size is employed when it is far away from the boundary.
    
    To describe the adaptive time-stepping, we first introduce the step size parameter $h \in (0,1)$, the threshold parameters
    \begin{equation}\label{eq:delta-order1.5}
    \delta_1 :=  \sqrt{12 \widehat C_b d h \log(h^{-1})}, \quad \text{and} \quad 
    \delta_2 :=  \sqrt{16 \widehat C_b d h^2 \log(h^{-1})},
    \end{equation}
    and the critical regions
    \[
    V_{\partial D}(\delta_2, \delta_1) = \{x \in D \mid d(x,\partial D) \in (\delta_2, \delta_1]\}
    \]
    and 
    \[
    V_{\partial D}(0, \delta_2) = \{x \in D \mid d(x,\partial D) \le \delta_2\}.
    \]
    The time-stepping is then given by
    \begin{equation}\label{eq:adaptive-order-1.5}
      \Delta t_n = \Delta t(\barX(t_n)) \coloneqq
      \begin{cases}
          h & \text{if} \quad d(\barX(t_n), \partial D) > \delta_1 \\
          h^2 & \text{if} \quad d(\barX(t_n), \partial D) \in(\delta_2, \delta_1]\\ 
          h^3 & \text{if} \quad d(\barX(t_n), \partial D) \le \delta_2. 
      \end{cases}
  \end{equation}
  This means that the step size $h$ is used when $\barX(t_n)$
  is in the non-critical region $D\setminus V_{\partial D}(0, \delta_1)$,
  the small step size $h^2$ is used when $\barX(t_n)$ is in the critical region farthest from the boundary, and the tiny step size $h^3$ is used in when $\barX(t_n)$ is in the critical region nearest the boundary. (The step size used in $D^C$, whether $h$, $h^2$ or $h^3$ is not of any practical importance, but is needed in the theoretical analysis to extend the numerical solution up to time $T$ when $\nu < T$, similarly as for the order 1 method.) The values of the threshold parameters $(\delta_1, \delta_2)$ are chosen as a compromise between accuracy and computational cost: The critical regions should be sufficiently large so that the strong-error convergence rate in $h$ is kept at almost order 1.5, cf.~Theorem~\ref{thm:error-order1.5}, but they should also be kept as small as possible to keep the computational cost of the method low. It follows from the proofs of Lemma~\ref{lem:strides-order1.5} and Theorem~\ref{thm:cost-order1.5} that~\eqref{eq:delta-order1.5} is a suitable compromise.

    We are now ready to present the main results on the strong convergence rate and computational cost of the order 1.5 method.

    \begin{theorem}[Strong convergence rate for the order 1.5 method]\label{thm:error-order1.5}
    If Assumptions~\ref{assm:domain} and~\ref{assm:coeffs} hold, then for any 
    for any $\xi >0$ there exists a constant $C_{\nu} >0$ such that 
    \[
    \E{ |\nu - \tau| }  \le C_{\nu} h^{3/2-\xi}
    \]
    holds for sufficiently small $h>0$.
    \end{theorem}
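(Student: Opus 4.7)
The plan is to adapt the strategy used for Theorem~\ref{thm:error-order1}, exploiting the finer three-tier mesh~\eqref{eq:adaptive-order-1.5} together with the improved pathwise accuracy $\cO(h^{3/2})$ of the order $1.5$ It\^o--Taylor scheme to gain an extra half order. The argument rests on three pillars: a Gaussian tail bound that makes ``rare'' exit events contribute only $\cO(h^{p})$ for any $p$; direct control of the overshoot distance $d(\barX(\nu),\partial D)$ on the dominant event; and a Feynman--Kac transfer converting this spatial overshoot into a bound on the exit-time error.

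Classify the numerical paths according to the region from which $\barX$ exits $D$: let $E_2$ be the event that $\barX(t_n)\in V_{\partial D}(0,\delta_2)$ for some $t_n<\nu$, let $E_1$ be the event that $\barX$ exits from $V_{\partial D}(\delta_2,\delta_1)$ without first visiting $V_{\partial D}(0,\delta_2)$, and let $E_0$ be the complementary event of a direct exit from $D\setminus V_{\partial D}(0,\delta_1)$. On $E_0$ the exiting stride has length $h$, so by Assumption~\ref{assm:coeffs}.3 the corresponding Brownian increment has conditional covariance bounded by $\widehat C_b d\,h$ but must have magnitude at least $\delta_1=\sqrt{12\widehat C_b d h\log(h^{-1})}$; a Gaussian tail bound then gives $\Prob{E_0}=\cO(h^{5})$ per stride, and an analogous computation with stride $h^2$ and threshold $\delta_2$ yields $\Prob{E_1}=\cO(h^{p})$ for every $p$. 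Since $|\nu-\tau|\le T$ almost surely, the contributions of $E_0$ and $E_1$ to $\E{|\nu-\tau|}$ are negligible relative to $h^{3/2-\xi}$. On the dominant event $E_2$ the exiting stride has size $h^3$, so Assumption~\ref{assm:coeffs}.3 and Gaussian moment bounds give
\[
\E{\,d(\barX(\nu),\partial D)^{q}\,\ind{E_2}}^{1/q}=\cO(h^{3/2}),\qquad q\ge 1,
\]
and a parallel argument controls rare events where the exact path $X$ exits while $\barX$ remains in the non-critical region.

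To convert this $\cO(h^{3/2})$ spatial overshoot into a bound on $\E{|\nu-\tau|}$, compare both $\tau$ and $\nu$ to the exit time from an enlarged domain $D_r$ with $r$ of order $h^{3/2}$, so that $\barX(\nu)\in D_r$ holds with probability $1-\cO(h^{p})$. Apply It\^o's formula to $u_r(t,X(t))$ on $[0,\tau]$ and an It\^o--Taylor expansion to $u_r(t_n,\barX(t_n))$ stride by stride up to $\nu$; the PDE $\partial_t u_r+\cL^0 u_r=-1$ from Proposition~\ref{prop:fk2} forces the leading drift terms to cancel the accumulated $\tau$ and $\nu=\sum_n\Dt_n$, leaving (i) a martingale remainder whose second-moment contribution inherits the $\cO(\Dt_n^{2})$ local truncation error of the order $1.5$ scheme and sums to $\cO(h^{3/2})$ over the at most $\cO(h^{-1})$ non-critical strides, and (ii) a boundary term $u_r(\nu,\barX(\nu))-u_r(\tau,X(\tau))$ which by~\eqref{eq:lipschitz-u} is controlled by $L\cdot d(\barX(\nu),\partial D_r)+L\cdot |X(\tau)-\barX(\tau)|$ and is $\cO(h^{3/2})$ in $L^{1}$ by the overshoot estimate above and the strong convergence of the scheme. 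After absorbing the logarithmic factors from $\delta_1,\delta_2$ into $h^{-\xi}$, these combine to $\E{|\nu-\tau|}\le C_\nu h^{3/2-\xi}$.

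The main obstacle is closing this last step without losing the half-order gain: each error source---local truncation, overshoot across $\partial D_r$, and the $r$-dependence of $u_r$---must be shown to be of size $h^{3/2}$ up to logarithms, and the martingale remainder $\int\nabla u_r\cdot b\,\rdW$ must be controlled uniformly at the stopping time $\nu$. The uniform Lipschitz constant $L$ from Proposition~\ref{prop:fk2}, which does not degenerate as $r\to 0$, is what permits this, and the third-tier step size $h^{3}$ together with its calibration against $\delta_2$ is what makes the overshoot match the $\cO(h^{3/2})$ scheme accuracy and thereby preserves the extra half order relative to the order $1$ method.
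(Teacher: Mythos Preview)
Your outline shares some ingredients with the paper's proof --- the enlarged domain $D_r$ with $r\asymp h^{3/2}$, the uniform Lipschitz bound~\eqref{eq:lipschitz-u} on $u_r$, and Gaussian tail bounds to make long strides negligible --- but the central step, converting spatial error into $\E{|\nu-\tau|}$, is attempted by a route that does not close under the stated assumptions.

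The gap lies in your ``It\^o--Taylor expansion of $u_r(t_n,\barX(t_n))$ stride by stride'' and the accompanying martingale-remainder claim. For this to work you would need, for each stride, an expansion of $u_r(t_{n+1},\Psi_{1.5}(\barX(t_n),\Dt_n))-u_r(t_n,\barX(t_n))$ whose drift matches $-\Dt_n$ up to an error summable to $\cO(h^{3/2})$. That is a \emph{weak} local-error statement about the scheme acting on the test function $u_r$, and it requires more smoothness of $u_r$ than the $C^{1,2}$ regularity provided by Proposition~\ref{prop:fk2}; in particular, the order-$1.5$ expansion touches $\cL^i\cL^i$ applied to $\nabla u_r$, i.e.\ third spatial derivatives of $u_r$, which are not available. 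The assertion that the remaining ``martingale remainder'' has second moment summing to $\cO(h^{3/2})$ is not justified and conflates the scheme's \emph{strong} local error with a weak expansion of $u_r$. Finally, your boundary term involves $|X(\tau)-\barX(\tau)|$, but $\tau$ is not a mesh point, so Proposition~\ref{prop:conv-rate-strong} does not apply; with the piecewise-constant extension of $\barX$ this quantity is only $\cO(h^{1/2})$ in general.

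The paper avoids all of this. It never expands $u_r$ along $\barX$; the only use of $u_r$ is the Lipschitz bound~\eqref{eq:lipschitz-u}, which immediately gives $I\le L\,\E{|X(\nu)-\barX(\nu)|}=\cO(h^{3/2})$ and $II_1=\E{\tau_r-\tau}\le Lr$. The entire difficulty is pushed into showing $II_2=\E{(\nu-\tau_r)\ind{\nu>\tau}}\le \cO(h^{3/2})$, and this is done \emph{without} any Dynkin-type identity: one sets $t^*=\max\{t_k\le\tau_r\}$ and proves that, on an event of probability $1-\cO(h^{3/2})$, the inequality $|\barX(t^*)-X(\tau_r)|<r$ holds, hence $\barX(t^*)\notin D$ and $\nu\le t^*\le\tau_r$. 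The key technical point --- and the place where the three-tier mesh is actually exploited --- is a short \emph{recursive} argument using the maximal-stride set $A$ of Lemma~\ref{lem:strides-order1.5} (for the \emph{exact} process $X$, not for $\barX$): first $\Dt(\barX(t^*))\le h^2$ because $\barX(t^*)$ lies within $\delta_1+r$ of $\partial D_r$, and then, since on $A$ the stride of $X$ over any $h^2$-interval is at most $\delta_2$, one bootstraps to $\Dt(\barX(t^*))=h^3$. Only then does Proposition~\ref{prop:errorInterpolation} with window $h^3$ give $|X(t^*)-X(\tau_r)|=\cO(h^{3/2})$, which combined with Proposition~\ref{prop:conv-rate-strong} closes the estimate. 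Your exit-region classification $E_0,E_1,E_2$ is a reasonable surrogate for the first half of this, but it does not by itself deliver the comparison $\nu\le\tau_r$ that the paper extracts from the recursive step-size argument.
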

    We defer the proof to Section \ref{subsec:proofs-order1.5}.    
    
    \begin{theorem}\label{thm:cost-order1.5}
      Let Assumptions~\ref{assm:domain} and~\ref{assm:coeffs} hold 
      and assume that one evaluation of $\Psi_{1.5}$ costs $\cO(1)$.
      Then it holds that 
      \[
        \E{\cost{\barX}} = \cO\big( h^{-1}  \log(h^{-1}) \big). 
      \]
    \end{theorem}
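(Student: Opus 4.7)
The plan is to adapt the proof strategy of Theorem~\ref{thm:cost-order1} to the refined three-regime decomposition of the order 1.5 method. Since $\Dt(\barX(t)) \in \{h,h^2,h^3\}$ depending on which of the three regions $\barX(t)$ occupies, the cost integral splits as
\begin{equation*}
\cost{\barX} = \frac{T_1}{h} + \frac{T_2}{h^2} + \frac{T_3}{h^3},
\end{equation*}
where $T_1,T_2,T_3$ are the occupation times of $\barX$ in the non-critical region $D\setminus V_{\partial D}(0,\delta_1)$, the intermediate layer $V_{\partial D}(\delta_2,\delta_1]$, and the innermost layer $V_{\partial D}(0,\delta_2]$, respectively. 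The bound $T_1 \le \nu \le T$ contributes $T/h$ trivially to $\E{\cost{\barX}}$.

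For $T_2$ and $T_3$, the key task is to establish an occupation-time bound of the form
\begin{equation*}
\E{\int_0^\nu \ind{\barX(t) \in V_{\partial D}(0,\delta)}\rdt} \le C\delta^2 \qquad \text{for } \delta \in \{\delta_1,\delta_2\},
\end{equation*}
with a constant $C>0$ independent of $h$; this is the order 1.5 analogue of Lemma~\ref{lem:strides-order1}, namely Lemma~\ref{lem:strides-order1.5}. Granting it, the definitions~\eqref{eq:delta-order1.5} give $\delta_1^2 = \cO(h\log(h^{-1}))$ and $\delta_2^2 = \cO(h^2\log(h^{-1}))$, so that $\E{T_2}/h^2$ and $\E{T_3}/h^3$ are each $\cO(h^{-1}\log(h^{-1}))$, and the theorem follows by summing the three contributions together with the trivial $T/h$ bound.

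The main obstacle is proving the occupation-time bound. For the exact diffusion, the Feynman--Kac representation identifies $v_\delta(x) := \E[x]{\int_0^\tau \ind{X(t) \in V_{\partial D}(0,\delta)}\rdt}$ as the classical solution of an elliptic PDE of the same form as~\eqref{eq:familyPDE} but with source $\ind{V_{\partial D}(0,\delta)}$ in place of $1$; uniform ellipticity of $bb^\top$ near $\partial D$ together with the $C^3$-regularity of $\partial D$ established in Section~\ref{subsec:notAndAssumptions} allow one to construct a local boundary barrier scaling as $\cO(\delta^2)$, giving $\sup_{x\in D} v_\delta(x) = \cO(\delta^2)$. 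Transferring this bound from $X$ to the numerical process $\barX$ is precisely where the logarithmic widening in~\eqref{eq:delta-order1.5} pays off: the one-step sub-Gaussian increments of $\Psi_{1.5}$ concentrate on scale $\sqrt{\Dt_n}$, so a numerical path sitting inside a layer of width $\sqrt{C h\log(h^{-1})}$ cannot, except with polynomially small probability in $h$, traverse the layer in a single step or leave $D$ undetected. Combining this concentration estimate with the strong convergence of $\Psi_{1.5}$ shows that the numerical occupation time is controlled by the exact one up to terms negligible in $h$, and the remaining bookkeeping (truncation at $T$, and steps with $\barX(t_n) \in D^C$ needed to extend the solution to $[\nu,T]$) is routine and parallels the order 1 case.
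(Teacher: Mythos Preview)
Your overall structure---split the cost into three pieces according to which step size is active, bound the non-critical piece trivially by $T/h$, and show the expected occupation time of $\barX$ in a $\delta$-layer is $\cO(\delta^2)$---matches the paper's proof. However, you have misidentified what Lemma~\ref{lem:strides-order1.5} says: it is the maximal-stride lemma (the exact path moves by more than $\delta_i$ on some interval of the mesh only with probability $\cO(h^2)$), not an occupation-time bound. The $\cO(\delta^2)$ occupation estimate is not a standalone lemma in the paper; it is the main work of the cost proof, and you cannot simply ``grant it''.

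The paper and you handle this main step differently. The paper first transfers from $\barX$ to $X$: using the stride set $A$ of Lemma~\ref{lem:strides-order1.5} together with $B_i := \{\max_{t_k}|X(t_k)-\barX(t_k)| \le \delta_i\}$, it shows that on an event of probability $1-\cO(h^2)$ one has $\{\nu > s_n\}\cap\{\Dt(\barX(s_n))=h^{i+1}\} \subset \{\tau_{2\delta_i}>s_n\}\cap\{d(X(s_n),\partial D)\le 2\delta_i\}$. It then bounds this last probability by $\cO(\delta_i^2)$ via the absorbing-boundary Fokker--Planck density and the boundary gradient estimate of Lemma~\ref{lem:gradPBound}: the density is $\cO(\delta_i)$ within distance $\cO(\delta_i)$ of $\partial D_{2\delta_i}$, and integrating over the tube gives $\cO(\delta_i^2)$. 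Summing over $h^{-3}$ time slices, the complementary $\cO(h^2)$ event contributes $\cO(h^{-1})$, which is just tight enough.

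Your route---solve the elliptic problem $\cL v_\delta = -\ind{V_{\partial D}(0,\delta)}$ with zero boundary data and use a barrier/Green's-function bound to get $\sup v_\delta = \cO(\delta^2)$---is a legitimate alternative for the \emph{exact} process and avoids the parabolic machinery. But the transfer from $X$ to $\barX$ that you sketch (``concentration \ldots\ strong convergence \ldots\ routine bookkeeping'') hides exactly the non-routine part: you must dominate $\nu$ by an exit time of $X$ (so the numerical path cannot linger after the exact one has left) and translate $\barX\in V_{\partial D}(0,\delta_i)$ into $X\in V_{\partial D}(0,c\delta_i)$ on a high-probability event. This is precisely where the paper invokes the stride lemma and strong convergence explicitly via $A\cap B_i$, and the $\cO(h^2)$ failure probability summed over $\cO(h^{-3})$ slices is only marginally small enough. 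Your proposal is fixable along these lines, but that step should be made explicit rather than declared routine.
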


\section{Theory and proofs} \label{sec:theory} This section proves
    theoretical properties of the adaptive time-stepping methods of
    order 1 and 1.5. We first describe how critical regions combined
    with adaptive time-stepping can bound the overshoot of the
    diffusion process with high probability, and thereafter use this
    property to prove the strong convergence of the method given by
    Theorems~\ref{thm:error-order1}
    and~\ref{thm:error-order1.5}. Lastly, we prove upper bounds for
    the expected computational cost of the methods.

Let us first state a few useful theoretical results. 

\begin{proposition}\label{prop:conv-rate-strong}
    Let Assumption~\ref{assm:coeffs} hold. Recall that $X$ denotes the exact solution of the SDE~\eqref{eq:SDE} and that 
    $\barX$ denotes the numerical solution computed on the adaptive mesh $\mesh{\Dt}$ with the strong It\^o--Taylor scheme of order $\gamma \in \{1, 1.5\}$. Then for any $p \ge 1$, the following bound holds:
    \[
      \E{ \sup_{t_k \in \mesh{\Dt}} |X(t_k) - \barX(t_k)|^p } \le C h^{ \gamma p}, 
    \]
    where $C>0$ depends on $p$. 
\end{proposition}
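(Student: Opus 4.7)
The plan is to extend the classical strong convergence proof for fixed-step Itô--Taylor schemes (see~\cite[Theorem 10.6.3]{kloeden1992stochastic} for $\gamma=1$ and the corresponding order-1.5 result) to the random, adaptive mesh generated by~\eqref{eq:adaptive-order-1} or~\eqref{eq:adaptive-order-1.5}, exploiting the two structural facts that $\Delta t_n \le h$ almost surely and that each $t_n$ is an $(\cF_t)$-stopping time with $\Delta t_n$ being $\cF_{t_n}$-measurable. With these in hand, the local truncation error bounds and the Grönwall--BDG machinery from the classical proof go through, yielding a global strong error of order $h^{\gamma}$ in $L^p$, from which $\E{\sup_{t_k \in \mesh{\Dt}} |X(t_k)-\barX(t_k)|^p}\le C h^{\gamma p}$ follows.

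First I would establish conditional local error estimates. Let $Y_{n+1}$ denote the exact flow of~\eqref{eq:SDE} started from $\barX(t_n)$ at time $t_n$ and evaluated at time $t_{n+1}$. Under Assumption~\ref{assm:coeffs}.1, the $C_b^3$-regularity of $a$ and $b$ together with the stochastic Itô--Taylor expansion and standard $L^{2p}$-moment bounds for iterated Wiener integrals yield, conditional on $\cF_{t_n}$,
\[
\bigl\| Y_{n+1} - \Psi_\gamma(\barX(t_n), \Delta t_n) \bigr\|_{L^{2p}(\Omega\mid\cF_{t_n})} \le C_1 \Delta t_n^{\gamma+1/2},
\]
\[
\bigl\| \E[Y_{n+1} - \Psi_\gamma(\barX(t_n), \Delta t_n)\mid \cF_{t_n}] \bigr\|_{L^{2p}(\Omega\mid\cF_{t_n})} \le C_2 \Delta t_n^{\gamma+1},
\]
with deterministic constants depending only on $p$ and the $C_b^3$-norms of $a,b$. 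These are the usual mean-square and mean local error bounds of strong order $\gamma$, and the randomness of $\Delta t_n$ does not spoil them since $\Delta t_n$ is $\cF_{t_n}$-measurable.

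Next I would set up the global error recursion. Writing $E_n := X(t_n)-\barX(t_n)$, insert and subtract $\Psi_\gamma(X(t_n),\Delta t_n)$ and $Y_{n+1}$ to obtain
\[
E_{n+1} \;=\; \bigl( \Psi_\gamma(X(t_n),\Delta t_n) - \Psi_\gamma(\barX(t_n),\Delta t_n) \bigr) + \bigl(X(t_{n+1}) - \Psi_\gamma(X(t_n),\Delta t_n)\bigr).
\]
The first bracket is Lipschitz in $E_n$ with Lipschitz constant $1 + C\Delta t_n$ plus a martingale increment proportional to $E_n$, controlled by the $C_b^3$-bounds. The second bracket is the local truncation error from Step~1 with $\barX(t_n)$ replaced by $X(t_n)$. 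Splitting the latter into a $\cF_{t_n}$-conditional drift part (bounded by $\Delta t_n^{\gamma+1}$) and a martingale remainder (bounded by $\Delta t_n^{\gamma+1/2}$ in $L^{2p}$), I would apply the Burkholder--Davis--Gundy inequality to the sum of martingale increments and a Jensen/Minkowski bound to the drift sum, then apply a discrete Grönwall inequality to $\E[\max_{k\le n}|E_k|^{2p}]$. Since $\sum_n \Delta t_n \le T + h$ and $\Delta t_n \le h$, the sum of squared martingale moments is $\sum_n \Delta t_n^{2\gamma+1}\le h^{2\gamma}\sum_n \Delta t_n \lesssim T h^{2\gamma}$, and the sum of drift moments is $\sum_n \Delta t_n^{\gamma+1}\le h^{\gamma} T$. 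The Grönwall factor telescopes into $e^{CT}$, producing the desired $\cO(h^{\gamma p})$ bound after taking $2p$-th roots and re-raising to the $p$-th power.

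The principal technical obstacle is justifying that the classical moment estimates for iterated Itô integrals such as $\cI_{(j_1,j_2)}$, $\Delta W^i_n \Delta t_n$, and $\Delta Z^i_n$ hold when the integration limits $[t_n,t_{n+1}]$ are stopping-time intervals with $\cF_{t_n}$-measurable length $\Delta t_n$ bounded by the deterministic constant $h$. This is handled by a standard conditioning argument: conditional on $\cF_{t_n}$, the mesh is frozen, the Wiener increments $W(t_n+s)-W(t_n)$ for $s\in[0,\Delta t_n]$ form a Brownian motion independent of $\cF_{t_n}$, and the Itô isometry and BDG inequality apply as usual. A secondary difficulty is that the supremum in the statement is taken over the random index set $\mesh{\Dt}$ whose cardinality $N$ is itself random; however, $N\le T h^{-\gamma_{\min}}$ where $\gamma_{\min}$ is the finest step exponent ($2$ for the order-1 scheme, $3$ for the order-1.5 scheme), so all sums above are almost surely finite and the optional stopping arguments needed for BDG on the partial sums remain valid.
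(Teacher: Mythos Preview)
The paper does not give its own proof of this proposition; it simply refers the reader to \cite[Theorem 10.6.3]{kloeden1992stochastic}. Your proposal is therefore considerably more detailed than what the paper offers, and in fact it addresses an issue the paper's bare citation glosses over: the classical theorem in Kloeden--Platen is stated for deterministic meshes, whereas here the mesh points $t_n$ are stopping times and $\Delta t_n$ is random. Your identification of the two structural facts that rescue the argument---$\Delta t_n$ is $\cF_{t_n}$-measurable and bounded by the deterministic constant $h$---is exactly right, and the conditioning-plus-BDG-plus-discrete-Gr\"onwall route you sketch is the standard way to extend the fixed-step proof to this setting. The local error bounds, the error recursion, and the summation estimates $\sum_n \Delta t_n^{2\gamma+1}\le h^{2\gamma}(T+h)$ are all correct. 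In short, your approach is sound and strictly more informative than the paper's one-line citation.
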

See~\cite[Theorem 10.6.3]{kloeden1992stochastic} for a proof Proposition~\ref{prop:conv-rate-strong}.

\begin{proposition}\label{prop:errorInterpolation}
For the exact solution of the SDE~\eqref{eq:SDE} and a sufficiently small $h>0$, it holds for any $p \ge 1$ that
\[
\E{ \max_{k\in \{0,1,\ldots, \lceil T/ h\rceil\}} \sup_{s \in [0, h] }  
|X(kh+s) - X(k h)|^p } \le C \sqrt{ h^p \log( h^{-1})},    
\]
where $C>0$ depends on $p$. 
\end{proposition}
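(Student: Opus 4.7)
The plan is to decompose each increment via the SDE dynamics, bound drift and diffusion parts separately, and then upgrade the per-interval moment bound to one on the maximum over the $N+1 = \lceil T/h\rceil + 1$ intervals via a high-moment union argument.

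First, I would write
\[
X(kh+s) - X(kh) = \int_{kh}^{kh+s} a(X(u))\,\mathrm{d}u + \int_{kh}^{kh+s} b(X(u))\,\mathrm{d}W(u)
\]
and note that under Assumption~\ref{assm:coeffs} both $a$ and $b$ are uniformly bounded, so the drift contribution is controlled deterministically by $\sup_x |a(x)| \cdot h$, which is negligible relative to the target rate. For the diffusion integral, the Burkholder-Davis-Gundy inequality applied to the continuous martingale $s\mapsto \int_{kh}^{kh+s} b(X(u))\,\mathrm{d}W(u)$, together with the uniform bound on $bb^{\top}$, yields for every $q\ge 1$ a constant $C_q$ with
\[
\E{Z_k^q} \le C_q\, h^{q/2}, \qquad Z_k \coloneqq \sup_{s\in[0,h]} |X(kh+s) - X(kh)|,
\]
uniformly in $k$; importantly, $C_q$ grows only polynomially in $q$, consistent with the sharp BDG constant $C_q \lesssim q^{q/2}$.

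Next, to pass from per-interval to maximum bounds, I would combine Jensen with a union bound at the level of moments: for any $r\ge 1$,
\[
\E{\max_{0\le k\le N} Z_k^p} \le \Bigl(\E{\max_{0\le k\le N} Z_k^{pr}}\Bigr)^{1/r} \le \Bigl( (N+1)\, C_{pr}\, h^{pr/2} \Bigr)^{1/r}.
\]
Choosing $r \asymp \log(h^{-1})$ renders $(N+1)^{1/r} = O(1)$ since $N+1 \lesssim h^{-1}$, while $C_{pr}^{1/r}$ contributes only a polylogarithmic factor. Combining these yields a bound of order $h^{p/2}$ times a logarithmic factor, matching the statement.

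The main obstacle is the balancing act in the choice of the moment order $r$: if $r$ is taken too small, the residual factor $(N+1)^{1/r} \sim h^{-1/r}$ from the union bound spoils the rate, whereas if $r$ is taken too large, the constant $C_{pr}$ explodes. The choice $r = \Theta(\log h^{-1})$ hinges on the fact that the BDG constant grows at most polynomially in its order; this is a classical fact, but it is what converts the naive $h^{p/2-1}$ estimate coming from a crude union bound into the desired $h^{p/2}$-with-logarithm bound.
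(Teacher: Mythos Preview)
Your approach is sound and self-contained, whereas the paper gives no independent argument at all: it simply records that the proposition follows from \cite[Theorem~2]{Muller-Gronbach2002} by replacing the piecewise-constant Euler--Maruyama approximation there with the piecewise-constant interpolant of the exact solution. So your BDG-plus-high-moment-union route is a genuinely different, elementary derivation where the paper relies on a citation; what your argument buys is transparency about where the logarithmic factor comes from, at the price of having to track the growth of the BDG constant in the moment order.

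One caveat worth making explicit. If you track the constants, the factor $C_{pr}^{1/r}$ with $C_q \lesssim q^{q/2}$ and $r \asymp \log(h^{-1})$ contributes $(p\log h^{-1})^{p/2}$, so your argument actually yields $O\bigl((h\log h^{-1})^{p/2}\bigr)$ rather than the stated $\sqrt{h^p\log h^{-1}} = h^{p/2}(\log h^{-1})^{1/2}$; your phrase ``a logarithmic factor, matching the statement'' glosses over this. This is not a flaw in your method: the exponent $p/2$ on the logarithm is already sharp for Brownian motion (by L\'evy's modulus of continuity the maximum increment over $\asymp h^{-1}$ intervals of length $h$ is $\asymp\sqrt{h\log h^{-1}}$, hence its $p$-th power has expectation $\asymp(h\log h^{-1})^{p/2}$), and it is also what M\"uller-Gronbach's theorem delivers. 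The printed exponent $1/2$ on the logarithm thus appears to be a slip in the statement; it is harmless downstream, since in the proofs of Theorems~\ref{thm:error-order1} and~\ref{thm:error-order1.5} only the power of $h$ matters and any polylogarithmic factor is absorbed by the $h^{\xi}$ slack.
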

The above proposition is a direct consequence of replacing the piecewise constant Euler--Maruyama approximation with a piecewise constant interpolation of the exact solution of the SDE in~\cite[Theorem 2]{Muller-Gronbach2002}.  

\subsection{Order 1 method} \label{subsec:proofs-order1}
This section proves theoretical results for the order 1 method. 

For the It\^o process~\eqref{eq:SDE} and $t>s\ge0$, let
\begin{equation}
    \label{eq:runningmaximum}
    M(s,t) \coloneqq \sup_{r \in [s, t]} \abs{X(r) - X(s)}.
\end{equation}
One may view $M(s,t)$ as the maximum stride the process $X$ takes over the interval $[s,t]$. The following lemma below shows that the threshold parameter 
$\delta$ is chosen sufficiently large to ensure that the maximum stride $X$ takes over every interval in the mesh $\mesh{\Dt}$ is with very high probability bounded by $\delta$. This estimate will help us bound the probability that the numerical solution exits the domain $D$ from the non-critical region in the proof of Theorem~\ref{thm:error-order1} (i.e., to bound the probability of exiting $D$ when using a large timestep).

\begin{lemma}\label{lem:strides-order1}
Let Assumptions~\ref{assm:domain} and~\ref{assm:coeffs} hold, and assume that 
the timestep parameter $h \in (0,1)$ is sufficiently small.
For the threshold parameter $\delta = \sqrt{8 \widehat C_b h d\, \log(h^{-1})}$
and the maximal-stride set 
\[
A := \Big\{\omega \in \Omega \mid M(t_n, t_{n} + \Delta t_n) \le \delta \quad \forall n \in \{0, 1,\ldots, N-1\} \Big\},
\]
it then holds that $\bP(A) = 1 - \cO(h)$.
\end{lemma}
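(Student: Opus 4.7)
My plan is to combine the strong Markov property with a Gaussian-type concentration bound for SDE increments, then apply a union bound over the adaptive steps. The key observation is that, since every step size satisfies $h^{2} \le \Delta t_n \le h$, the number of adaptive steps is deterministically bounded by $N_{\max} := \lceil T/h^{2}\rceil + 1$, and since $\Delta t_n$ is $\cF_{t_n}$-measurable with $\Delta t_n \le h$, I can replace each conditional probability by a uniform over-estimate on intervals of length $h$.

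More precisely, for each $n$ the strong Markov property at the stopping time $t_n$ yields
\[
\bP\big(M(t_n, t_n + \Delta t_n) > \delta \;\big|\; \cF_{t_n}\big) \;\le\; G(X(t_n)), \quad \text{where } G(x) := \bP\Big(\sup_{s \in [0,h]} |X^{0,x}(s) - x| > \delta\Big),
\]
so by the union bound $\bP(A^{c}) \le N_{\max}\,\sup_{x \in \bR^d} G(x)$. For the supremum, I split $X^{0,x}(s) - x = \int_0^s a(X^{0,x})\,\rdt + M(s)$ with $M(s) := \int_0^s b(X^{0,x})\,\rdW$. The drift is deterministically bounded by $h\|a\|_\infty$, and since $\delta = \Theta(\sqrt{h\log h^{-1}})$ we have $h\|a\|_\infty = o(\delta)$ as $h \to 0$. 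Each component $M_i$ is a continuous martingale whose quadratic variation is bounded by $\widehat C_b h$ thanks to Assumption~\ref{assm:coeffs}.3; by the Dambis--Dubins--Schwarz time change $M_i$ is a Brownian motion evaluated at its quadratic variation, so the reflection principle gives $\bP(\sup_{s \in [0,h]} |M_i(s)| > \mu) \le 2\exp\!\big(-\mu^{2}/(2\widehat C_b h)\big)$. Combining this with $|M|_2 \le \sqrt{d}\max_i |M_i|$ and a union bound over components yields
\[
G(x) \le 2d\exp\!\left(-\frac{(\delta - h\|a\|_\infty)^{2}}{2 d\widehat C_b h}\right).
\]

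Plugging in $\delta^{2} = 8\widehat C_b d h\log(h^{-1})$ and using $(\delta - h\|a\|_\infty)^{2} \ge (1 - o(1))\,\delta^{2}$ shows that the exponent equals $-(4 - o(1))\log(h^{-1})$, so $G(x) \le 2d\, h^{\,4 - o(1)}$ for $h$ sufficiently small. Putting everything together,
\[
\bP(A^{c}) \;\le\; N_{\max}\, \sup_x G(x) \;\le\; \big(\lceil T/h^{2}\rceil + 1\big)\cdot 2d\, h^{\,4 - o(1)} \;=\; \cO(h),
\]
which is the claimed bound. The main technical point to verify carefully is the calibration of the constant $8$ in the definition of $\delta$: the uniform diffusion bound $\widehat C_b$ and the dimension factor $d$ in the component-wise union bound conspire to give exactly the factor of $4$ in the exponent, which together with the drift-loss $o(1)$ leaves one full power of $h$ of safety margin beyond the $h^{-2}$ factor coming from $N_{\max}$. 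The only other place care is needed is the application of strong Markov: since $t_n$ is a stopping time, $\Delta t_n$ is $\cF_{t_n}$-measurable, and the SDE coefficients are globally Lipschitz (Assumption~\ref{assm:coeffs}.1), the conditional law of $(X(t_n + s))_{s \ge 0}$ given $\cF_{t_n}$ is that of a strong solution started from $X(t_n)$, so the uniform-in-$x$ bound on $G$ is directly applicable.
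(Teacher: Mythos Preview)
Your proof is correct and rests on the same three ingredients as the paper's (strong Markov at the stopping times $t_n$, a Gaussian--type tail bound for the martingale part of $X(t_n+\cdot)-X(t_n)$, and a union bound), but you organise them more economically. The paper first \emph{extends} the adaptive mesh to a deterministic mesh containing exactly $T/h$ intervals of length $h$ and $T/h^{2}$ intervals of length $h^{2}$, then bounds the two families separately: using the crude drift absorption $\delta - C_a h \ge \delta/\sqrt{2}$ it obtains per-step tail probabilities $2dh^{2}$ and $2dh^{3}$, respectively, so that each family contributes $\cO(h)$. You instead use the deterministic bound $N\le N_{\max}=\lceil T/h^{2}\rceil+1$ for \emph{all} steps and compensate by the sharper drift absorption $(\delta-h\|a\|_\infty)^{2}=(1-o(1))\delta^{2}$, which yields a per-step bound $2d\,h^{4-o(1)}$; multiplying by $h^{-2}$ still leaves $\cO(h)$ (indeed $\cO(h^{2-o(1)})$). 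Your DDS/reflection argument for each coordinate is equivalent to the exponential martingale inequality the paper cites from Baldi, and your component-wise union bound via $|M|\le \sqrt{d}\max_i|M_i|$ produces the same $2d\exp(-\mu^{2}/(2d\widehat C_b h))$. The paper's step-size split is what naturally generalises to the order-$1.5$ lemma (three step sizes, two thresholds), but your cruder counting with the tighter exponent would work there as well.
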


\begin{proof}
  From the adaptive time-stepping, we know that the mesh $\mesh{\Dt}$
  contains $N = |\mesh{\Dt}|-1$ many intervals where $N$ is a random
  integer that is bounded from below by $T/h$ and from above by
  $T/h^2$. At most $T/h$ of the intervals are of length $h$ and at
  most $T/h^2$ of the intervals are of length $h^2$. To avoid
  complications due to a random number of elements in the mesh, we
  extend the mesh $\mesh{\Dt}$ to span over $[0,2T]$ in such a way
  that the extended mesh agrees with $\mesh{\Dt}$ over the interval
  $[0,t_N]$ and contains exactly $T/h$ many intervals of length $h$
  and $T/h^2$ many intervals of length $h^2$. In other words,
\[
\mesh{E} = \{t_0, t_1,\ldots, t_N, t_{N+1}, \ldots , t_{\widehat N}\}
\]
where $\mesh{E} \cap \mesh{\Dt} = \mesh{\Dt}$ and $t_{\widehat N} = 2T$
with $\widehat N = T/h + T/h^2$.
And for
\[
\widehat \Delta^1 := \left\{ k \in \{0,1,\ldots, \widehat N-1\} \mid  \Delta t_k = h \right\} 
\]
and
\[
\widehat \Delta^{2} := \left\{ k \in \{0,1,\ldots, \widehat N-1\} \mid  \Delta t_k = h^2  \right\} 
\]
we have that $|\widehat \Delta^1| = T/h$ and $|\widehat \Delta^2| = T/h^2$,
respectively. 
We represent these two sets of 
integers and relabel their associated mesh points
as follows:
\[
\widehat \Delta^1 = \{ \widehat \Delta^1(1), \widehat\Delta^1(2),\ldots, \widehat\Delta^1({T/h}) \}
\quad \text{with} 
\quad  t^h_n := t_{\widehat\Delta^1(n)}\qquad n \in \{1,2,\ldots, T/h\}
\]
and 
\[
\widehat \Delta^{2} = \{\widehat\Delta^{2}(1), \widehat\Delta^{2}(2),\ldots, \widehat\Delta^{2}({T/h^2}) \}
\quad \text{with} \quad 
t^{h^2}_n := t_{\widehat\Delta^{2}(n)}   \qquad n \in \{1,2,\ldots, T/h^2\}.
\]

Introducing the maximal-stride set over the extended mesh
\begin{equation}
    B \coloneqq \cb{\omega \in \Omega \; \big| \; M(t_{n}, t_{n+1}) \le \delta \quad \forall n \in \{0,1,\ldots, \widehat N-1\} },
\end{equation}
and noting that $B \subset A$, we achieve the following bound for the probability of $A^{C}$: 
\begin{equation}\label{eq:prob-A-complement1}
    \begin{split}
        \Prob{A^{C}} &\leq \Prob{B^{C}}\\
        &\leq 
        \sum_{n=0}^{\hat N-1} \Prob{M(t_{n}, t_{n} + \Dt_{n}) > \delta}\\
        &=\sum_{n =1}^{T/h} \Prob{M(t^{h}_{n}, t^{h}_{n} + h) > \delta}
        + \sum_{n = 1}^{T/h^2} \Prob{M(t_{n}^{h^2}, t^{h^2}_{n} + h^2) > \delta}.
    \end{split}
\end{equation}
Recall that 
the integral form of the SDE~\eqref{eq:SDE} is given by
\begin{equation}
X(r) = X(s) + \int_{s}^{r} a(X(u)) du + \int_{s}^{r} b(X(u))\rdW(u).
\end{equation}
and let 
\[
C_a := \sup_{x \in \bR^d } |a(x)|.
\]
This yields 
\[
M(t^{h}_{n}, t^{h}_{n} + h) = \sup_{r \in [t_n, t_n+h]}
|X(r) - X(s)| \le C_a h + \sup_{r \in [0, h]} \left|\int_{t_n^h}^{t_n^h+r} b(X(u))\rdW(u)\right|.
\]
Assuming that $h$ is sufficiently small so that
\[
\delta - C_a h \ge \frac{\delta }{\sqrt{2}},
\]
we obtain that 
\[
\Prob{M(t^{h}_{n}, t^{h}_{n} + h) > \delta}
\le \Prob{\sup_{r \in [0, h]} \abs{\int_{t_n^h}^{t_n^h+r} b(X(u))\rdW(u)} \ge \delta/\sqrt{2}}.
\]
Introducing $\tilde{b}(u):= b(X(u+t_n))$ and $\tildeW(t):= W(t+t_n^h) - W(t_n^h)$, the above integral takes the form
\[
\int_{t_n^h}^{t_n^h+r} b(X(u))\rdW(u) = \int_0^r \tilde{b}(u) \, d\tildeW(u).
\]
Since $t_n^h$ is an element in the mesh $\mesh{E}$, it is a finite stopping time, and the strong Markov property therefore implies that $\tildeW(t)$ is a standard Wiener process associated to the filtration $\sigma(\{\tildeW(t)\}_{u \in[0,r]} ) \subset \cF_{t_n+r}$, cf.~\cite[Theorem 2.16]{morters2010brownian}. Furthermore, the integrand $\tilde{b}(u)$ is a square-integrable and $\cF_{t_n+u}$-adapted stochastic process, and Assumption~\ref{assm:coeffs}.3 implies that
\[
|\xi|^{-2}\int_0^h \xi^\top \tilde{b}(u) \tilde{b}^\top(u) \xi \, du \le \widehat C_b h 
\quad \quad \forall \xi \in \bR^d\setminus\{0\}.
\]
By~\cite[Proposition 8.7]{baldi2017stochastic},
Doob's martingale inequality then yields that
\[
\Prob{\sup_{r \in [0, h]} \abs{\int_{0}^{r} \tilde b(u) d\tildeW(u)} \ge \delta/\sqrt{2}} \le 2 d \exp\left(-\frac{\delta^2}{4 \widehat C_b h d} \right)
= 2 d h^2
\]
for any $n \in \{1, 2,\ldots, T/h\}$. Assuming $h\le 2/3$, a similar argument yields that
\[
\Prob{M(t_{n}^{h^2}, t^{h^2}_{n} + h^2) > \delta}
\le 2 d h^3 \quad \forall n \in \{1,2,\ldots, T/h^2\}.
\]
Inequality~\eqref{eq:prob-A-complement1} yields that
\[
\bP(A^C) \le \sum_{n = 1}^{T/h} \Prob{M(t_{n}^{h}, t^{h}_{n} + h) > \delta} + \sum_{n = 1}^{T/h^2} \Prob{M(t_{n}^{h^2}, t^{h^2}_{n} + h^2) > \delta} \le 4dT h.  
\]

\end{proof}

\begin{proof}[Proof of Theorem~\ref{thm:error-order1}]
We first partition the exit-time error into two parts:
\[
\E{|\tau - \nu|} = \E{ |\tau - \nu| 1_{\nu< \tau}} + \E{ |\tau - \nu| 1_{\nu>\tau}} =: I + II.     
\]

Since $\nu \in \mesh{\Dt}$, Proposition~\ref{prop:conv-rate-strong} for $\gamma =1$ implies that  
\[
    \E{|X(\nu) - \barX(\nu)|} \le
    \E{\max_{t_k \in \mesh{\Dt}} |X(t_k) - \barX(t_k)|} = \cO(h).     
\] 
For term $I$, $\nu < \tau$ implies that $\nu <T$. Consequently, $\barX(\nu) \in D^C$
and $X(\nu) \in D$, so there exists a $y \in \partial D$ satisfying that 
$|X(\nu) - y| \le |X(\nu) -\barX(\nu)|$, and of course also that $\tau^{\nu, y} = 0$. Thanks to the Lipschitz property~\eqref{eq:lipschitz-u}, we obtain that  
\[
    \begin{split}
  I  &=
  \E{ \E{ (\tau - \nu) 1_{\nu< \tau} \mid \cF_\nu} }\\ 
  &= \E{ \E{ \tau^{\nu, X(\nu)} 1_{\nu< \tau} \mid \cF_\nu} }\\
  &\le \E{ \E{ \tau^{\nu, X(\nu)}  - \tau^{\nu, y }\mid \cF_\nu} } \\
  &=  \E{ u(\nu, X(\nu)) - u(\nu, y)} \\
  & \le L \E{ |X(\nu) - y| }\\
  & \le L \E{ |X(\nu) - \barX(\nu)| }\\
& = \cO(h).
    \end{split} 
\]

For the second term, we assume for the given $\xi>0$ that $r:=h^{1-\xi} < \bar R_D$, where 
we recall that $\bar R_D$ is defined in Assumption~\ref{assm:coeffs}.2, and 
introduce the second exit time problem 
\[
\tau_r = \inf\{t\ge 0 \mid X(t) \notin D_r\} \wedge T.
\]
As $r<\bar R_D$, Proposition~\ref{prop:fk2} applies, 
which in particular means that 
the function $u_r(t,x) = \E{\tau_{r}^{t,x} }$ satisfies the Lipschitz property~\eqref{eq:lipschitz-u}.
 
Noting that $\tau_r \ge \tau$, we obtain 
\[
  II = \E{ (\nu - \tau) \ind{\nu > \tau} } \le 
  \E{ \tau_r -  \tau } + \E{ (\nu - \tau_r) \ind{\nu > \tau}}   =: II_1 + II_2. 
\]
Here,
\[
  II_1 = \E{ \E{\tau_r -  \tau \mid \cF_\tau }  } = \E{ \E{\tau_r^{\tau,X(\tau)} \mid \cF_\tau }  } = \E{ u_r(\tau, X(\tau)) }
  \le L r = \cO(h^{1-\xi}), 
\]
where the last inequality follows from~\eqref{eq:lipschitz-u}
and
\begin{equation}\label{eq:distanceStatement}
  X(\tau) \in\partial D \implies d(X(\tau), \partial D_r) = r.
\end{equation}
The statement~\eqref{eq:distanceStatement} is due to the
diffeomorphism~\eqref{eq:diffeomorphism}, as it tells us that 
whenever $\tau< T$ and thus $X(\tau) \in \partial D$, we may 
view the diffeomorphism as a projection onto the boundary of $D_r$:
\begin{equation}\label{eq:projection-pi}
  \partial D \ni X(\tau) \mapsto \pi_{r}(X(\tau)) \in \partial D_r \qquad \text{satisfying that} \quad 
  |\pi_{r}(X(\tau)) - X(\tau) | = r.
\end{equation}
Using that $u_r(T, \cdot) = 0$ and 
$u_r(\tau, \pi_{r}(X(\tau))) = 0$ whenever $\tau <T$, we verify the last inequality 
for $II_1$ as follows:
\[
  \begin{split}
\E{ u_r(\tau, X(\tau))} &= \E{ u_r(\tau, X(\tau)) \ind{\tau <T}}\\
& = \E{ \big|\, u_r(\tau, X(\tau)) - u_r(\tau, \pi_{r}(X(\tau))) \,\big| \ind{\tau <T} }\\
& \le \E{ L  \big|\, X(\tau) -\pi_{r}(X(\tau)) \,\big| \ind{\tau <T} } \\
&\le L r.
  \end{split}
\]

To bound $II_2$, we first note that since 
$(\nu - \tau_r) \ind{\tau_r = T} \le 0$, 
it holds that 
\[
II_2 \le \E{ (\nu - \tau_r) \ind{ \{\nu > \tau\} \cap \{ \tau_r < T\}}}.  
\]
Let us introduce
\[
  t^* := \max\{t_k \in \mesh{\Dt} \mid t_k \le \tau_r \}
\]
and let $A$ denote the maximal-stride set defined in Lemma~\ref{lem:strides-order1}, for which we recall that $\bP(A) = 1 - \cO(h)$. 
Since $\tau_r - t^* \le h$, it holds that 
\[
| X(\tau_r, \omega)- X(t^*, \omega)| \le \delta \qquad \forall \omega \in A
\]
and since $\tau_r <T \implies X(\tau_r) \in \partial D_r$, we also have that
\begin{equation*}\label{eq:x-star-dist-bound1}
d(X(t^*, \omega), \partial D_r) \le \delta \qquad \omega \in A \cap \{\tau_r <T\}.
\end{equation*}

To bound the distance between the exact process and the numerical one
at time $t^*$, let $p^* \in \bN$ be sufficiently large so that $p^*\xi > 1$. Then by Proposition~\ref{prop:conv-rate-strong}
\begin{equation}\label{eq:dist-numerical-path-true-path-order1}
  \bP(|\barX(t^*) - X(t^*)| \ge r)
  \le \frac{\E{|\barX(t^*) - X(t^*)|^{p^*}}}{h^{(1-\xi) p^*}} = \cO(h).
\end{equation}
Let further $\widetilde{A} := \{\omega \in A \mid |\barX(t^*) - X(t^*)| < r\}$, and note that $\bP(\widetilde{A}) = 1 - \cO(h)$. 
We will next show that for all paths in $\widetilde A \cap \{\tau_r <T\}$, the numerical solution uses 
the smallest timestep at $t^*$, meaning that 
\begin{equation}\label{eq:timestep-B}
\Delta t\big(\barX(t^*,\omega)\big) = h^2 \quad  \forall \omega \in \widetilde{A} \cap \{\tau_r <T\}.
\end{equation}
Recall first that the non-critical region of $D$ for the order 1 method is given by $\widetilde D = D \setminus V_{\partial D}(0,\delta)$,
and observe that for all paths $\omega \in \widetilde{A} \cap \{\tau_r <T\}$, it holds that  
\[
d(\barX(t^*, \omega), \partial D_r) \le d(\barX(t^*, \omega), X(t^*)) + d(X(t^*),\partial D_r) < \delta +r.
\]
Since $d(\widetilde D, \partial D_r) \ge \delta +r$, we conclude that
$ \barX(t^*, \omega) \notin \widetilde D$ and~\eqref{eq:timestep-B} is verified.
Thanks to~\eqref{eq:timestep-B}, we can sharply estimate the distance 
between $\barX(t^*)$ and $X(\tau_r)$ as follows:
\[
  \begin{split}
  \bP\Big( \{|&\barX(t^*) - X(\tau_r)| \ge r \} \cap \{\tau_r <T\}\Big) \\ &\le
  \bP\Big( \{|\barX(t^*) - X(\tau_r)| \ge h^{1-\xi} \} \cap \{\tau_r <T\} \cap \widetilde{A} \Big) + \cO(h)\\
  &\le 
  \frac{\E{|\barX(t^*) - X(\tau_r)|^{p^*} \ind{\widetilde{A} \cap \{\tau_r <T\}}}}{ h^{(1-\xi)p^*} } + \cO(h)\\
  &\le \frac{\E{ p^*|\barX(t^*) - X(t^*)|^{p^*} \ind{\widetilde{A} \cap \{\tau_r <T\}} + p^*|X(t^*) - X(\tau_r)|^{p^*}\ind{\widetilde{A} \cap \{\tau_r <T\}} }}{ h^{(1-\xi)p^*} }
  +\cO(h) \\
  &= \cO(h).
  \end{split}
\]
The first summand in the last inequality is bounded by~\eqref{eq:dist-numerical-path-true-path-order1} 
and the second one is bounded by Proposition~\ref{prop:errorInterpolation}, equation~\eqref{eq:timestep-B}
(which implies that $\tau_r-t^* \le h^2$ for all $\omega \in \widetilde A \cap \{\tau_r <T\}$), and
\[
  \begin{split}
  \E{|X(t^*) - X(\tau_r)|^{p^*}\ind{ \widetilde{A}\cap \{\tau_r <T\}}} &\le
  \E{ \max_{k\in \{0,\ldots, T/h^2 -1\}} \sup_{s \in [0,h^2] }
    |X(kh^2 + s) - X(k h^2)|^{p^*} }\\
  &= \cO( h^{p^*} \sqrt{\log(h^{-1})} )\,.
  \end{split}
\]
For $G := \{\omega \in \Omega \mid |\barX(t^*) - X(\tau_r)| < r \}$, we conclude that 
$\bP(G^C \cap \{\tau <T\}) = \cO(h)$ and  
\[
  \begin{split}
     \omega \in G \cap \{\tau_r < T\} &\implies d(\barX(t^*,\omega), \partial D_r) < r\\
    &\implies \barX(t^*,\omega) \notin D\\
    &\implies \nu(\omega) \le t^*(\omega)\\
    &\implies \nu(\omega) \le \tau_r(\omega).
  \end{split}
\]
Consequently, 
\[
\begin{split}
  II_2 &\le  \E{ (\nu - \tau_r) \ind{\{\nu > \tau\} \cap \{\tau_r < T\}}}\\
  &\le  \underbrace{\E{ (\nu - \tau_r) \ind{\{\nu > \tau\} \cap \{\tau_r < T\}\cap G}}}_{\le 0} +  T  \, \bP(G^C \cap \{\tau_r <T\})\\
  & = \cO(h).
 \end{split}
\]
\end{proof} 

We next prove the computational cost result for the order 1 method.

\begin{proof}[Proof of Theorem~\ref{thm:cost-order1}]

  Let $s_n := n h^2$ for $n=0,1,\ldots$ denote a set of deterministic uniformly spaced mesh points. This mesh contains all realizations of the adaptive mesh, meaning that $\mesh{\Dt}(\omega)\subset \{s_n\}_{n\ge 0}$ for all $\omega \in \Omega$,
  and we have that  
\begin{equation}\label{eq:costOrder1}
  \begin{split}
\E{\text{Cost}(\barX)} &= \E{\int_{0}^{\nu} \frac{1}{\Dt(\barX(t))}\rdt}\\
& \le  \int_0^T \frac{\E{\ind{\{\nu}>t\} \cap \{ \Dt(\barX(t)) = h \}}}{h}
+ \frac{\E{\ind{\{\tau_{2\delta}>t\} \cap \{ \Dt(\barX(t)) = h^2 \}}}}{h^2} \,\rdt\\
&\le  \frac{T}{h} +  \sum_{n=0}^{h^{-2}-1} \bP\big(\{\nu>s_{n}\} \cap \{ \Dt(\barX(s_n)) = h^2 \}\big).
\end{split}
\end{equation}
To bound the second term we assume the step size parameter $h>0$ is sufficiently small such that $\delta < \bar R_D/2$, where we recall that $\bar R_D$ is defined in Assumption~\ref{assm:coeffs}, and consider the stopping time
\[
  \tau_{2\delta} = \inf\{ t \ge 0 \mid X(t) \notin D_{2 \delta} \} \wedge T. 
\] 
Let further $A$ denote the maximal-stride set defined in Lemma~\ref{lem:strides-order1} and let $B:= \{ \omega \mid \max_{t_k \in \mesh{\Dt}} |X(t_k) - \barX(t_k)| \le \delta \}$. Then it holds that
\[
\omega \in A\cap B \implies \nu(\omega) \le \tau_{2\delta}(\omega),  
\]
which we prove by contradiction as follows: Suppose that $\omega \in A \cap B$ and $\tau_{2\delta}<\nu$. Then $\barX(\tau_{2\delta}) \in D$ and $X(\tau_{2\delta}) \in \partial D_{2\delta}$. Let $t_k$ denote largest mesh point in $\mesh{\Dt}$ that is smaller or equal to $\tau_{2\delta}$. Then $(t_k,\tau_{2\delta}) \subset(t_k,t_{k+1})$ and $\omega \in A$ implies that $X(t_k) \notin D_{\delta}$ while we have that $\barX(t_k) = \barX(\tau_{2\delta}) \in D$.
Hence $d(\barX(t_k), X(t_k)) > \delta$ which contradicts that $\omega \in B$. 

From Proposition~\ref{prop:conv-rate-strong} we have that 
\begin{equation*}
  \begin{split}
\Prob{B^C} &\le \E{ \frac{\max_{t_k \in \mesh{\Dt}} |X(t_k) - \barX(t_k)|^2}{\delta^2} }\\
& = \cO\left( \frac{h}{\log(h^{-1})} \right) = o(h),
  \end{split}
\end{equation*}
and since $\bP(A) = 1- \cO(h)$, we conclude that $\Prob{A\cap B} \le 1 - \cO(h)$. 
Recalling further that 
\[
  \Dt(\barX(s_n)) = h^2  \iff d(\barX(s_n), \partial D) \le \delta, 
\]
and
\[
  B \cap \{ d(\barX(s_n), \partial D) \le \delta \} 
  \subset   B \cap \{ d(X(s_n), \partial D) \le 2\delta \},
\]
we obtain that
\begin{equation}\label{eq:sumSplitCost}
  \begin{split} 
  \bP\big(\{\nu>s_{n}\} \cap \{ \Dt(\barX(s_n)) &= h^2 \}\big)\\
  & = \bP\big(A \cap B \cap \{\nu>s_{n}\} \cap \{ d(\barX(s_n), \partial D) \le \delta \}\big) + \cO(h) \\
  & \le  \bP\big(\{\tau_{2\delta}>s_{n}\} \cap \{ d(X(s_n), \partial D) \le 2\delta \}\big) + \cO(h).
  \end{split}
\end{equation}
To bound the first summand, note first that the "density" of the SDE~\eqref{eq:SDE} on the domain $D_{2\delta}$ with paths removed when they exit $D_{2\delta}$ is a generalized function 
$p(t,x): [0,T]\times \overline D_{2\delta} \to [0,\infty]$ that solves the following 
absorbing-boundary Fokker--Planck equation~\cite{naeh90,schuss80}: 
\begin{equation}\label{eq:fokkerPlanckPDE}
\left.\begin{aligned}
    \partial_t p &=  -\nabla \cdot (a p) + \frac{1}{2}  \nabla \cdot \rb{bb^{\top} \nabla p}    && \text{in}  \quad (0,T] \times D_{2\delta}, \\
   p  &= 0  && \text{on} \quad  (0,T] \times \partial D_{2\delta}\\
   p(0,x) &= \delta(x-x_0) && \quad  x\in \overline D_{2\delta}. 
\end{aligned}\right\}
\end{equation}
By the regularity constraints imposed on coefficients 
in Assumption~\ref{assm:coeffs} with $2\delta \le \bar R_D$
and since the boundary $\partial D_{2\delta}$ is $C^3$,  
the Fokker--Planck equation has a unique 
solution that blows up at $(t,x) = (0,x_0)$, and the solution 
may be viewed as the Green's function $p(t,x) = G(t,x;0,x_0)$,~cf.~\cite[Chapter 3.7]{friedman64}. Note that the well-posedness holds for any $\delta \in [0,\bar R_D/2)$, and that we are considering a parametrized set of solutions $p(x,t;\delta)$
to the PDE~\eqref{eq:fokkerPlanckPDE}, where we supress the dependence on $\delta$
when confusion is not possible. 

Considering $p$ as a mapping from a smaller domain where a neighborhood 
$N$ of the singular point $(0,x_0)$ is removed from the full domain, it holds that $p:([0,T] \times \overline{D}_{2\delta}) \setminus N \to [0,\infty)$ is a continuously differentiable function, cf.~\cite[Chapters 1 and 3.7]{friedman64}.
Since we are interested in the propeties of $p$ near the cylindrical boundary 
$[0,T]\times \partial D_{2\delta}$, we will proceed as follows to remove a cylindrical neighborhood containing $(0, x_0)$ from the domain of $p$. 
For $\lambda := d(x_0, \partial D)/2$, let $\bar h>0$ be an upper bound for the step size parameter, such that 
$d(x_0, \partial D_{-2\delta}) > \lambda$ and $\delta(h) < \bar R_D/2$ hold 
whenever $h \le \bar h$. For $\Gamma_{2\delta} := D_{2\delta} \setminus D_{-2\delta}$
it then holds that $d(x_0, \Gamma_{2\delta}) >\lambda$ and Lemma~\ref{lem:gradPBound} implies that there exists a constant $C_p>0$ that is uniform in $h \in [0,\bar h]$ such that 
\begin{equation} \label{eq:pBoundOrder1}
  \max_{(t,x) \in [0,T] \times \overline{\Gamma}_{2\delta}} p(t,x) \le 4C_p \delta.     
\end{equation}
And Lemma~\ref{lem:cI_bounded} implies there exists a constant $C_{\cI}>0$ such that 
\[
\max_{r \in [-\bar R_D, \bar R_D]} \int_{\partial D_{r}} dS(x)  \le C_{\cI} . 
\]
%
%
Thanks to the co-area forumla,  
\[
  \begin{split}
  \bP(\{\tau_{2\delta}>s_{n}\} \cap \{ d(X(s_n), \partial D) \le 2 \delta \}) 
  &= \int_{\Gamma_{2\delta}}  p(s_n ,x) dx\\
  &= \int_{-2\delta}^{2\delta}  \int_{\partial D_{r}} p(s_n,x) dS(x) dr\\ 
  &\stackrel{\eqref{eq:pBoundOrder1}}{\le} 4C_p  \int_{-2\delta}^{2 \delta}  \delta \int_{\partial D_{r}}  dS(x) dr \\
  & \le 16 C_p C_\cI \, \delta^2,
  \end{split}
\]
and it follows from~\eqref{eq:costOrder1} and~\eqref{eq:sumSplitCost} that 
\[
  \begin{split}
\E{\cost{\barX}} &\le  \cO(h^{-1}) + \sum_{n=0}^{h^{-2}-1} \bP(\{\tau_{2\delta}>s_{n}\} \cap \{d(X(s_n), \partial D) \le 2\delta) \\ 
&= \cO\big(h^{-2} \delta^2 \big) = \cO(h^{-1} \log(h^{-1}))\, .
  \end{split}
\]
\end{proof}

\subsection{Order 1.5 method} \label{subsec:proofs-order1.5}
This section proves theoretical results for the order 1.5 method.

The following lemma below shows that the threshold parameters 
$\delta_1$ and $\delta_2$ are chosen sufficiently large to ensure 
that the maximum stride $X$ takes over every interval in the mesh $\mesh{\Dt}$ is with high likelihood bounded by either $\delta_1$ or $\delta_2$, depending on the length of the interval. The estimate will 
help us bound the probability that the numerical solution exits the domain 
$D$ from anywhere but the critical region nearest the boundary of $D$ 
in the proof of Theorem~\ref{thm:error-order1.5} (i.e., bound the probability of exiting $D$ using a larger timestep than $h^3$).

\begin{lemma}\label{lem:strides-order1.5}
Let Assumptions~\ref{assm:domain} and~\ref{assm:coeffs} hold and assume that 
the parameter $h \in (0,1)$ is sufficiently small.
For the time-steps in the mesh $\mesh{\Dt}$, let
\[
\begin{split}
\Delta^j &:= \left\{ k \in \{0,1,\ldots, N-1\} \mid  \Delta t_k = h^j \right\} 
\quad \text{for} \quad j=1,2,3.
\end{split}
\]
For the threshold parameters 
\[
\delta_1 = \sqrt{12 \widehat C_b d h \, \log(h^{-1})} \quad 
\text{and} \quad 
\delta_2 = \sqrt{16 \widehat C_b d h^2\, \log(h^{-1})}
\]

and the maximal-stride sets 
\[
\begin{split}
A_1 &:= \Big\{\omega \in \Omega \mid M(t_n, t_{n} + \Delta t_n) \le \delta_1 \quad \forall n \in \Delta^1  \Big\} \\
A_2 &:= \Big\{\omega \in \Omega \mid M(t_n, t_{n} + \Delta t_n) \le \delta_2 \quad \forall n \in \Delta^{2} \cup \Delta^{3}  \Big\}
\end{split}
\]
it then holds for $A := A_1\cap A_2$ that $\bP(A) = 1 - \cO(h^{2})$.
\end{lemma}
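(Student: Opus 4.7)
The plan is to mimic the proof of Lemma~\ref{lem:strides-order1}, adapting it to the three-level adaptive scheme by splitting $A^C$ into three pieces according to step size and handling each separately via Doob's martingale inequality. The key observation is that the threshold parameters $\delta_1$ and $\delta_2$ have been tuned precisely so that each of the three union-bound sums yields a contribution of order $h^2$.

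First I would deal with the random cardinality of the adaptive mesh by enlarging $\mesh{\Dt}$ to a mesh $\mesh{E}$ covering $[0,2T]$ with exactly $T/h$ intervals of length $h$, $T/h^2$ intervals of length $h^2$, and $T/h^3$ intervals of length $h^3$, in the same spirit as the extension $\widehat N = T/h + T/h^2$ in the proof of Lemma~\ref{lem:strides-order1}. After relabeling the endpoints of the three interval types as $t^h_n$, $t^{h^2}_n$, $t^{h^3}_n$, the set $A^C$ is contained in the analogous event $B^C$ for the extended mesh, and a union bound gives
\[
\bP(A^C) \le \sum_{n=1}^{T/h} \bP\bigl(M(t^h_n, t^h_n+h) > \delta_1\bigr)
+ \sum_{j=2}^{3} \sum_{n=1}^{T/h^j} \bP\bigl(M(t^{h^j}_n, t^{h^j}_n + h^j) > \delta_2\bigr).
\]

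Next, for each term I would write $X$ in its integral form and split the stride into a drift contribution, bounded by $C_a \Delta t_n \le \delta_\star/\sqrt{2}$ for $h$ small, and a diffusion contribution $\sup_{r \in [0,\Delta t_n]}|\int_{t_n}^{t_n+r} b(X(u))\rdW(u)|$. Using the strong Markov property at the stopping time $t^{h^j}_n$ to rewrite the stochastic integral as one driven by a fresh Wiener process, Assumption~\ref{assm:coeffs}.3 gives the quadratic-variation bound $\widehat C_b \Delta t_n$, so Doob's vector-valued martingale inequality (\cite[Proposition 8.7]{baldi2017stochastic}) yields
\[
\bP\bigl(M(t^{h^j}_n, t^{h^j}_n + h^j) > \delta_\star\bigr) \le 2d \exp\!\Big(-\frac{\delta_\star^2}{4 \widehat C_b d\, h^j}\Big).
\]

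The final and most delicate step is verifying that the exponents in $\delta_1$ and $\delta_2$ produce the right $\cO(h^2)$ total. Plugging in $\delta_1^2 = 12 \widehat C_b d h \log(h^{-1})$ against the denominator $4 \widehat C_b d h$ gives an exponent of $-3\log(h^{-1})$, i.e.\ probability $2d h^3$, and there are $T/h$ such intervals, contributing $2dTh^2$. For the $h^2$-intervals, $\delta_2^2 = 16 \widehat C_b d h^2 \log(h^{-1})$ divided by $4 \widehat C_b d h^2$ gives $-4\log(h^{-1})$ and probability $2dh^4$ per interval, times $T/h^2$ intervals, contributing $2dTh^2$. The $h^3$-intervals are handled by the same $\delta_2$ but the denominator $4 \widehat C_b d h^3$ produces the much smaller probability $2d h^{4/h}$ per interval, whose sum over $T/h^3$ intervals is negligible. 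Summing gives $\bP(A^C) = \cO(h^2)$, as required. The main obstacle is really just the bookkeeping around the extended mesh and ensuring that the threshold constants $12$ and $16$ in~\eqref{eq:delta-order1.5} are large enough for both the $h$- and $h^2$-regimes to contribute at most $\cO(h^2)$ after summation.
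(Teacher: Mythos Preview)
Your proposal is correct and follows essentially the same argument as the paper: extend the mesh deterministically, apply a union bound over the three interval types, and use Doob's martingale inequality with the strong Markov property to get per-interval tail bounds $2dh^3$, $2dh^4$, and (at most) $2dh^5$, each summing to $\cO(h^2)$. The only slip is that the extended mesh must cover $[0,3T]$, not $[0,2T]$, since $T/h\cdot h + T/h^2\cdot h^2 + T/h^3\cdot h^3 = 3T$; this is purely cosmetic and does not affect the argument.
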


\begin{proof}
From the adaptive time-stepping, we know that the mesh $\mesh{\Dt}$ contains $N = |\mesh{\Dt}|-1$ many intervals where $N$ is a random integer that is bounded from below by $T/h$ and from above by $T/h^3$. At most $T/h$ of the time-steps are of length $h$, at most $T/h^2$ are of length $h^2$, and at most $T/h^3$ are of length $h^3$. To avoid complications due to a random number of elements in the mesh, we extend the mesh $\mesh{\Dt}$ to span over $[0,3T]$ in such a way that the extended mesh agrees with $\mesh{\Dt}$ over the interval $[0,t_N]$ and contains exactly $T/h$ many time-steps of length $h$,  $T/h^2$ many time-steps of length $h^2$ and $T/h^3$ many time-steps of length $h^3$. In other words,
\[
\mesh{E} = \{t_0, t_1,\ldots, t_N, t_{N+1}, \ldots , t_{\widehat N}\}
\]
where $\mesh{E} \cap \mesh{\Dt} = \mesh{\Dt}$ and $t_{\widehat N} = 3T$
with $\widehat N = T/h + T/h^2 + T/h^3$. 
And for
\[
\widehat \Delta^j := \left\{ k \in \{0,1,\ldots, \widehat N-1\} \mid  \Delta t_k = h^j \right\} \quad 
\text{for} \quad  j =1,2,3,
\]
it holds that 
\[
|\widehat \Delta^j| = \frac{T}{h^j} \quad \text{for} \quad j=1,2,3.
\]
We represent these three sets of integers and relabel their associated mesh points as follows: For $j=1,2,3$, let
\[
\widehat \Delta^j = \{ \widehat \Delta^j(1), \widehat\Delta^j(2),\ldots, \widehat\Delta^j({T/h^{j}}) \}
\quad \text{with} 
\quad  t^{h^j}_n := t_{\widehat\Delta^j(n)}\qquad n \in \{1,2,\ldots, T/h^j\}.
\]

Introducing the following maximal-stride sets over the 
extended mesh
\begin{equation}
\begin{split}
    B_1 &:= \cb{\omega \in \Omega \; \big| \; M(t_{n}, t_{n+1}) \le \delta_1 \quad \forall n \in \widehat \Delta^1 }\\
    B_2 &:= \cb{\omega \in \Omega \; \big| \; M(t_{n}, t_{n+1}) \le \delta_2 \quad \forall n \in \widehat \Delta^2 \cup \widehat \Delta^3 }
    \end{split}
\end{equation}
and noting that $B := B_1 \cap B_2$ is a subset of $A$, we obtain 
the following upper bound: 
\begin{equation}\label{eq:prob-A-complement}
    \begin{split}
        \Prob{A^{C}} &\leq \Prob{B^{C}}\\
        &\leq 
        \sum_{n =1}^{T/h} \Prob{M(t^{h}_{n}, t^{h}_{n} + h) > \delta_1}
        + \sum_{n = 1}^{T/h^2} \Prob{M(t_{n}^{h^2}, t^{h^2}_{n} + h^2) > \delta_2}\\
        & \quad + \sum_{n = 1}^{T/h^3} \Prob{M(t_{n}^{h^3}, t^{h^3}_{n} + h^3) > \delta_2}.
    \end{split}
\end{equation}
Assuming that $h \in (0,1)$ is sufficiently small so that  
\[
\delta_1 - C_a h \ge \frac{\delta_1}{\sqrt{2}}, \qquad \delta_2 - C_a h^2 \ge \frac{\delta_2}{\sqrt{2}} 
\qquad \text{and} \quad h \le \frac{4}{5},
\]
then a similar use of Doob's martingale inequality as in the proof of Lemma~\ref{lem:strides-order1} yields  
\begin{align*}
\Prob{M(t^{h}_{n}, t^{h}_{n} + h) > \delta_1} &\le 2 d h^{3}  && \forall n \in \{1, 2,\ldots, T/h\},\\
\Prob{M(t^{h^2}_{n}, t^{h^2}_{n} + h^2) > \delta_2} 
&\le 2d \exp\left(-\frac{\delta_2^2}{4 \widehat C_b h^2 d} \right) 
= 2 d h^{4} 
&& \forall n \in \{1, 2,\ldots, T/h^2\},\\
\Prob{M(t^{h^3}_{n}, t^{h^3}_{n} + h^3) > \delta_2} 
&\le 2d \exp\left(-\frac{\delta_2^2}{4 \widehat C_b h^3 d} \right) 
\le 2 d h^{5} 
&&\forall n \in \{1, 2,\ldots, T/h^3\}.
\end{align*}
Conclusion: $\bP(A^C) \le 6 d\, T \,h^{2}$.   

\end{proof}

\begin{proof}[Abbreviated proof of Theorem~\ref{thm:error-order1.5}]

The exit-time error can be partitioned into two parts:
\[
\E{|\tau - \nu|} = \E{ |\tau - \nu| 1_{\nu< \tau}} 
+ \E{ |\tau - \nu| 1_{\nu>\tau}} =: I + II.    
\]
Similarly as in the proof of Theorem~\ref{thm:error-order1}, but now using the 
strong It\^o--Taylor method of order $\gamma =1.5$, we obtain that 
\[
  I  \le L \E{ |X(\nu) - \barX(\nu)| }  = \cO(h^{3/2}).
\]

For the second term, assume for the given $\xi>0$ that $r:=h^{3/2-\xi} < \bar R_D$ 
and introduce the second exit time problem 
\[
\tau_r = \inf\{t\ge 0 \mid X(t) \notin D_r\} \wedge T.
\]
As Proposition~\ref{prop:fk2} applies, we recall that
$u_r(t,x) = \E{\tau_{r}^{t,x} }$ satisfies the Lipschitz property~\eqref{eq:lipschitz-u}.
 Noting that $\tau_r \ge \tau$, we obtain 
\[
  II = \E{ (\nu - \tau) \ind{\nu > \tau} } \le 
  \E{ \tau_r -  \tau } + \E{ (\nu - \tau_r) \ind{\nu > \tau}}   =: II_1 + II_2. 
\]
A similar argument as in the proof of Theorem~\ref{thm:error-order1} yields that
\[
  II_1 = \E{ \E{\tau_r -  \tau \mid \cF_\tau }  } = \E{ u_r(\tau, X(\tau)) }
  \le L r = \cO(h^{3/2-\xi}). 
\]

To bound $II_2$, first observe that $(\nu - \tau_r) \ind{\tau_r = T} \le 0$
implies that
\[
II_2 \le \E{ (\nu - \tau_r) \ind{ \{\nu > \tau\} \cap \{ \tau_r < T\}}}.  
\]
We introduce 
\begin{equation}\label{eq:t-star-order1.5}
  t^* := \max\{t_k \in \mesh{\Dt} \mid t_k \le \tau_r \}
\end{equation}
and note for later reference that $\tau_r - t^* \le \Delta t(\barX(t^*))$.
Let $A$ denote the maximal-stride set defined in Lemma~\ref{lem:strides-order1.5}, 
where we recall that $\bP(A) = 1 - \cO(h^{2})$. Since $\tau_r - t^* \le h$, it holds that  
\[
|X(t^*, \omega) - X(\tau_r, \omega)| \le \delta_1 \qquad \forall \omega \in A
\]
and
\begin{equation*}\label{eq:x-star-dist-bound1.5}
d(X(t^*, \omega), \partial D_r) \le \delta_1 \qquad \omega \in A \cap \{\tau_r <T\}.
\end{equation*}

Using Proposition~\ref{prop:conv-rate-strong} for $\gamma =1.5$, we proceed to bound the distance between the exact diffusion process and the numerical solution at time $t^*$:  Let $p^* \in \bN$ be sufficiently large so that $p^*\xi > 1.5$. Then by Proposition~\ref{prop:conv-rate-strong},
\begin{equation}\label{eq:dist-numerical-path-true-path-order1.5}
  \bP(|\barX(t^*) - X(t^*)| \ge r)
  \le \frac{\E{|\barX(t^*) - X(t^*)|^{p^*}}}{h^{(3/2-\xi) p^*}} = \cO(h^{3/2}). 
\end{equation}
We conclude that for $\widetilde{A} := \{\omega \in A \mid |\barX(t^*) - X(t^*)| < r\} $, it holds that $\bP(\widetilde{A}) = 1 - \cO(h^{3/2})$. 

We will next show that $d(\barX(t^*, \omega), \partial D_r) < \delta_2+r$
for all $\omega \in \widetilde{A}\cap \{\tau_r <T\}$, 
which by the time-stepping ~\eqref{eq:adaptive-order-1.5} implies the that 
the smallest step size is used at time $t^*$ in the numerical solution:  
\begin{equation}\label{eq:timestep-tildeA-order1.5}
\Delta t(\barX(t_*, \omega)) = h^3 \quad \forall \omega \in \widetilde{A} \cap \{\tau_r <T\}.
\end{equation}
Observe first that  
\[
d(\barX(t^*, \omega), \partial D_r) \le d(\barX(t^*, \omega), X(t^*, \omega)) + d(X(t^*,\omega),\partial D_r) < \delta_1 +r \quad \forall \omega \in \widetilde{A} \cap \{\tau_r <T\},
\]
and let $\widetilde D:= D \setminus V_{\partial D}(0,\delta_1)$. 
Since $d(\widetilde D, \partial D_r) \ge \delta_1 +r$, we conclude that 
$ \barX(t^*) \notin \widetilde D$ and $\Delta t(\barX(t^*)) \le h^2$
for all paths in $\widetilde{A} \cap \{\tau_r <T\}$.
A recursive argument, where we restrict ourselves to paths in $\widetilde{A} \cap \{\tau_r <T\} \subset A$,
will sharpen the step size estimate to~\eqref{eq:timestep-tildeA-order1.5}: 
The property $\Delta t(\barX(t^*)) \le h^2$ and 
Lemma~\ref{lem:strides-order1.5} implies that
\[
\sup_{s \in [0,\Delta t(\barX(t^*))]} |X(t^{*}+s) - X(t^*)| \le \delta_2. 
\]
It therefore holds that 
\[
\tau_r - t^* \le \Delta t(\barX(t^*)) \le h^2,
\]
which implies that $d(X(\tau_r),X(t^*)) \le \delta_2$ and 
\[
d(\barX(t^*), \partial D_r) \le d(\barX(t^*), X(t^*)) + d(X(t^*), X(\tau_r)) < \delta_2 +r.
\]
This implies that $\barX(t^*) \notin D \setminus V_{\partial D}(0,\delta_2)$, and thus verifies Property~\eqref{eq:timestep-tildeA-order1.5}.

Thanks to~\eqref{eq:timestep-tildeA-order1.5}, we can sharpen the estimate of the distance 
between $\barX(t^*)$ and $X(\tau_r)$: 
\[
  \begin{split}
  \bP( \{|&\barX(t^*) - X(\tau_r)| \ge r \} \cap \{\tau_r <T\}) \\ &\le
  \bP( \{|\barX(t^*) - X(\tau_r)| \ge h^{3/2-\xi} \} \cap \{\tau_r <T\} \cap \widetilde{A}) + \bP(\widetilde{A}^C)\\
  &\le 
  \frac{\E{|\barX(t^*) - X(\tau_r)|^{p^*} \ind{\widetilde{A}\cap \{\tau_r <T\}}}}{ h^{(3/2-\xi)p^*} } + \cO(h^{3/2})\\
  &\le \frac{\E{ p^*|\barX(t^*) - X(t^*)|^{p^*} \ind{\widetilde{A} \cap \{\tau_r <T\}} + p^*|X(t^*) - X(\tau_r)|^{p^*}\ind{\widetilde{A}\cap \{\tau_r <T\}} }}{ h^{(3/2-\xi)p^*} }
  +\cO(h^{3/2}) \\
  &= \cO(h^{3/2}).
  \end{split}
\]
The first summand in the last inequality is bounded by~\eqref{eq:dist-numerical-path-true-path-order1.5}, 
and the second one is bounded by Proposition~\ref{prop:errorInterpolation},~\eqref{eq:timestep-tildeA-order1.5}
and
\[
  \begin{split}
  \E{|X(t^*) - X(\tau_r)|^{p^*}\ind{\widetilde{A}\cap \{\tau_r <T\}}} &\le
  \E{ \max_{k\in \{0,1,\ldots, \lceil T/h^3\rceil-1\}} \sup_{s \in [0,h^3] }
    |X(kh^3 + s) - X(k h^3)|^{p^*} \ind{\widetilde{A}} }\\
  &= \cO\big( h^{3p^*/2} \sqrt{\log(h^{-1})} \big)\,.
  \end{split}
\]
For $G := \{\omega \in \Omega \mid |\barX(t^*) - X(\tau_r)| < r \}$, 
we obtain that $\bP(G^C \cap \{\tau <T\}) = \cO(h^{3/2})$ and  
\[
  \begin{split}
    \omega \in G \cap \{\tau_r < T\} 
    \implies \barX(t^*,\omega) \notin D
    \implies \nu(\omega) \le t^*(\omega)
    &\implies \nu(\omega) \le \tau_r(\omega).
  \end{split}
\]
We conclude the proof by the following observation
\begin{align*}
  II_2 &\le  \E{ (\nu - \tau_r) \ind{\{\nu > \tau\} \cap \{\tau_r < T\}}} \\
  &\le  \underbrace{\E{ (\nu - \tau_r) \ind{\{\nu > \tau\} \cap \{\tau_r < T\}\cap G}}}_{ \le 0} + T \, \bP(G^{C} \cap \{\tau < T\} )= \cO(h^{3/2}).
 \end{align*}
\end{proof}

Up next, we prove the computational cost result for the order 1.5 method.

\begin{proof}[Sketch of proof of Theorem \ref{thm:cost-order1.5}]
Let $s_n := n h^3$ for $n=0,1,\ldots$ denote a set of deterministic uniformly spaced mesh points. This mesh contains all realizations of the adaptive mesh, meaning that $\mesh{\Dt}(\omega)\subset \{s_n\}_{n\ge 0}$ for all $\omega \in \Omega$. Similarly as in~\eqref{eq:costOrder1}, we obtain that   
\begin{equation}\label{eq:costOrder1.5} 
\begin{split}
    \E{\cost{\barX}} &\le  \frac{T}{h} 
    +  \sum_{n=0}^{h^{-3}-1} \frac{h^3}{h^2}\bP\big(\{\nu>s_{n}\} \cap \{ \Dt(\barX(s_n)) = h^2 \}\big) \\
    & \quad + 
     \sum_{n=0}^{h^{-3}-1} \bP \big(\{\nu>s_{n}\} \cap \{ \Dt(\barX(s_n)) = h^3 \}\big).
\end{split}
\end{equation}
Let $A$ denote the maximal stride set in Lemma~\ref{lem:strides-order1.5} 
and let 
\[
  B_i = \{ \omega \mid \max_{t_k \in \mesh{\Dt}} |X(t_k) - \barX(t_k)| \le \delta_i \}, \quad i =1,2.  
\]
If $\omega \in A \cap B_1$, we obtain by similar reasoning as in the proof of Theorem~\ref{thm:cost-order1} that 
\[
  A\cap B_1 \cap \{\nu>s_{n}\} \cap \{ \Dt(\barX(s_n)) = h^2 \} \subset \{\tau_{2\delta_1} > s_n\} \cap \{ d(X(s_n),\partial D) \le 2\delta_1 \},
\]
\[
  A\cap B_2 \cap \{\nu>s_{n}\} \cap \{ \Dt(\barX(s_n)) = h^3 \} \subset \{\tau_{2\delta_2} > s_n\} \cap \{ d(X(s_n),\partial D) \le 2\delta_2 \},
\]
and 
\[
  \bP(A\cap B_1) = 1 - \cO(h^2) \quad \text{and}\quad \bP(A\cap B_2) = 1 - \cO(h^2).
\]
This leads to
\begin{equation}\label{eq:costOrder1.5_2} 
  \begin{split}
      \E{\cost{\barX}} &\le  \cO(h^{-1}) 
      +  \sum_{n=0}^{h^{-3}-1} \frac{h^3}{h^2}\bP \big(\{\tau_{2\delta_1}>s_{n}\} \cap \{ d(X(s_n),\partial D) \le 2\delta_1\} \big) \\
      & \quad + 
       \sum_{n=0}^{h^{-3}-1} \bP \big(\{\tau_{2\delta_2}>s_{n}\} \cap \{ d(X(s_n),\partial D) \le 2\delta_2 \} \big).
  \end{split}
  \end{equation}
  By a similar argument as in the proof of Theorem~\ref{thm:cost-order1},
  it holds that    
\[
  \bP \big(\{\tau_{2\delta_1}>s_{n}\} \cap \{ d(X(s_n),\partial D) \le 2\delta_1\} \big)  = \cO(\delta_1^{2}) = \cO(h \log(h^{-1}))
\]
and 
\[
  \bP \big(\{\tau_{2\delta_2}>s_{n}\} \cap \{ d(X(s_n),\partial D) \le 2\delta_2 \} \big)  
  = \cO(\delta_2^{2}) =  \cO(h^{2} \log(h^{-1})), 
\]
so that 
\begin{align*}
    \E{\cost{\barX}} &= \cO(h^{-1} + h^{-3} \times h^2 \log(h^{-1})) 
    = \cO(h^{-1} \log(h^{-1})).
\end{align*}

\end{proof}

    \section{Numerical experiments}\label{sec:numerics}
    We run several simulations to numerically verify the
    theoretical rates on strong convergence and computational cost for
    the order 1 and 1.5 methods. Algorithm
    \ref{alg:order_gamma_method} describes the implementation of the
    two methods for computing the stopping time of one SDE path. In
    all of the problems below, we consider exit
    times~\eqref{eq:exit-time} with cut-off time $T=10$.
    \begin{algorithm}[h!]
        \begin{algorithmic}[1]
            \Require step size parameter $h$, initial state $x_{0}$, domain $D$, cut-off time $T$ 
            \State Initialize the numerical solution, i.e. $\barX(0) = x_{0} \in D$, set $n=0$ and $t_n=0$.
            \Repeat
                \State Based on whether $\gamma = 1$ or $\gamma = 1.5$, use equation \eqref{eq:adaptive-order-1} or equation \eqref{eq:adaptive-order-1.5},  respectively, to determine the time-step size 
                $\Delta t_n = \Dt(\barX(t_n))$. 
                \State Generate the independent Wiener increment $\Delta W_{n}$ or the tuple of correlated random variables $(\Delta W_{n}, \Delta Z_{n})$ corresponding to the order $\gamma$ of the adaptive method.
                \State Compute the new time $t_{n+1} = t_{n} + \Dt_{n}$, the new state of the associated numerical process 
                \begin{equation*}
                    \barX(t_{n+1}) = \Psi_{\gamma}(\barX(t_{n}), \Dt_{n}), 
                \end{equation*}
                and set $n= n+1$.
            \Until{$\barX(t_{n})$ exits the domain $D$ or until $t_{n} \geq T$, whichever occurs \\ first.} \\
            \Return The exit time of the trajectory: $\nu = t_{n} \wedge T$.
        \end{algorithmic}
        \caption{Order $\gamma$ adaptive time-stepping method}
        \label{alg:order_gamma_method}
    \end{algorithm}
    
    \subsection{Geometric Brownian Motion (GBM)}
    \label{subsec:numerics_1d_gbm}
    To begin, we will investigate the exit time of one-dimensional Geometric Brownian Motion (GBM) from the interval $D = (1, 7)$ for the GBM problem
    \begin{align*}
        \rdX &= 0.05 X \rdt+ 0.2 X\rdW \\
        X(0) &= 4.
    \end{align*}
    
    The reference solution to the mean exit time was computed by
    numerically solving the Feynman--Kac PDE,
    cf.~Proposition~\ref{prop:fk1}, using the Crank--Nicolson method
    for time discretization and continuous, piecewise linear finite
    elements for spatial discretization. To this end, we use the
    \texttt{Gridap.jl} library \cite{Verdugo2022,Badia2020} in the
    \texttt{Julia} programming language. The reference solution for
    the mean exit time of the process starting at $\barX(0) = 4$ is
    $\E{\tau} = 7.153211$, rounded to 7 significant digits. For all
    $l \in \{ 3, 4, 5, 6, 7 \}$, let $\nu_{l}$ represent the exit time
    of the numerical solution $\barX$ from the domain $D$ using the
    step size parameter $h = 2^{-l}$. We estimate the sample moments
    using $M = 10^7$ Monte Carlo samples.
    
    From Figure \ref{fig:gbm_1d}, we observe that the strong
    convergence rate obtained from the numerical simulations agrees
    with the theory for the order 1 and order 1.5 methods and the weak
    convergence rate coincides with the strong rate.  In our numerical
    studies, realizations on neighboring resolutions $\nu_{\ell-1}$
    and $\nu_\ell$ are pairwise coupled using the technique for
    non-nested meshes introduced in~\cite{gilesnonnested2016} together
    with the procedure~\cite[Example 1.1]{hoel2019central} for
    coupling all driving noise in the order 1.5 method. This reduces
    the variance of samples of differences $\nu_l - \nu_{l-1}$ in the
    Monte Carlo estimators for the weak ans strong errors, leading to
    efficient Monte Carlo estimators. We also observe that the expected
    computational complexity involved in implementing the order 1 and
    order 1.5 methods are $\cO(h^{-1} \abs{\log(h)})$ as shown in
    theory, albeit the order 1.5 method is more expensive than the
    order 1 method by a constant.
    \begin{figure}[h!]
        \centering
        \includegraphics[width=0.47\textwidth]{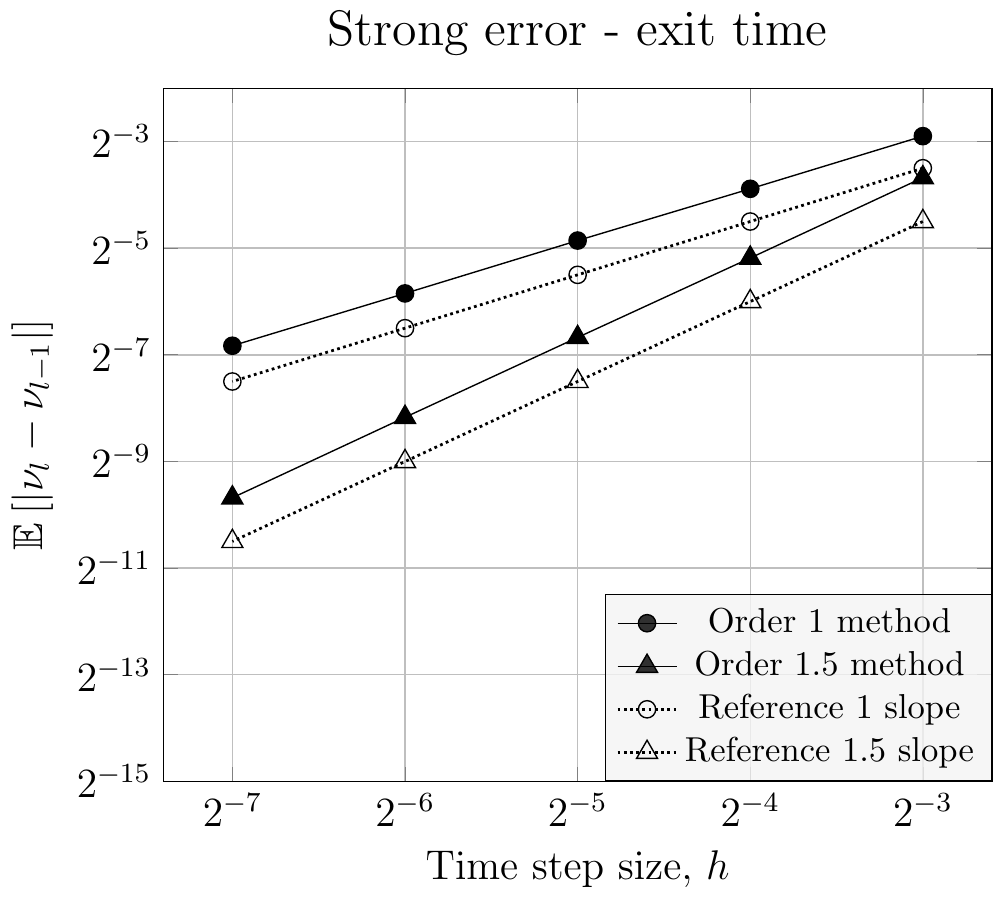}
        \includegraphics[width=0.47\textwidth]{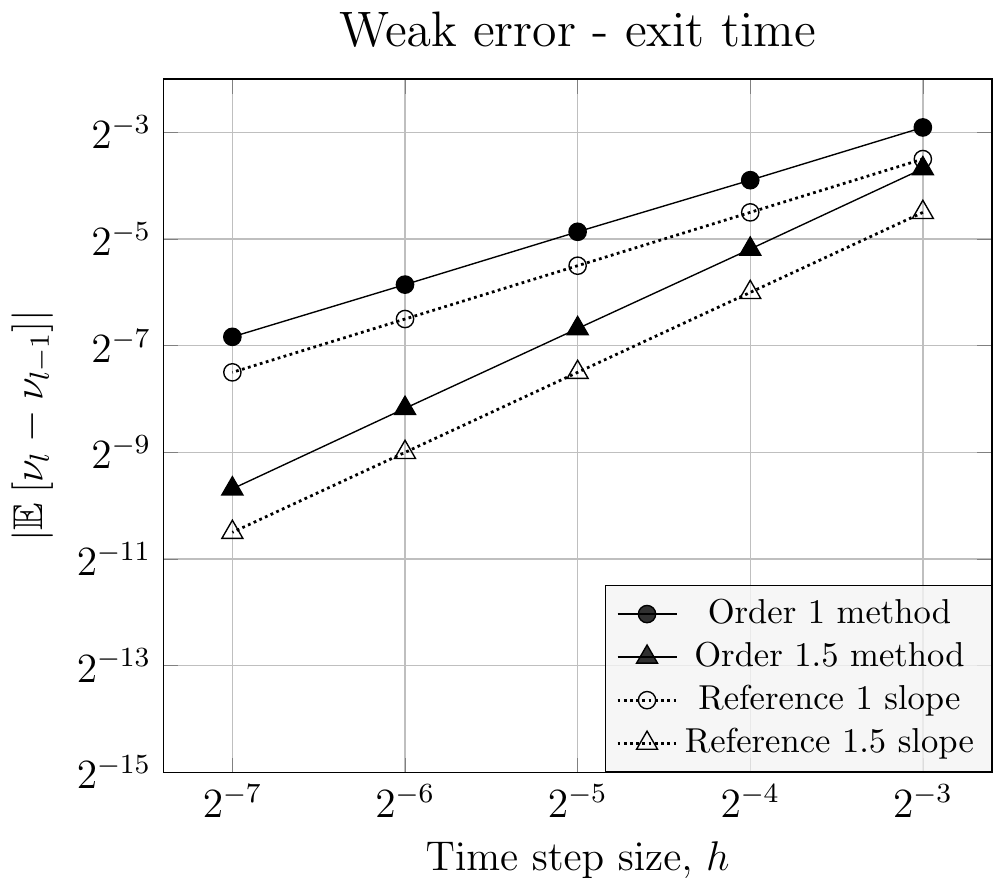}
        \centering
        \includegraphics[width=0.47\textwidth]{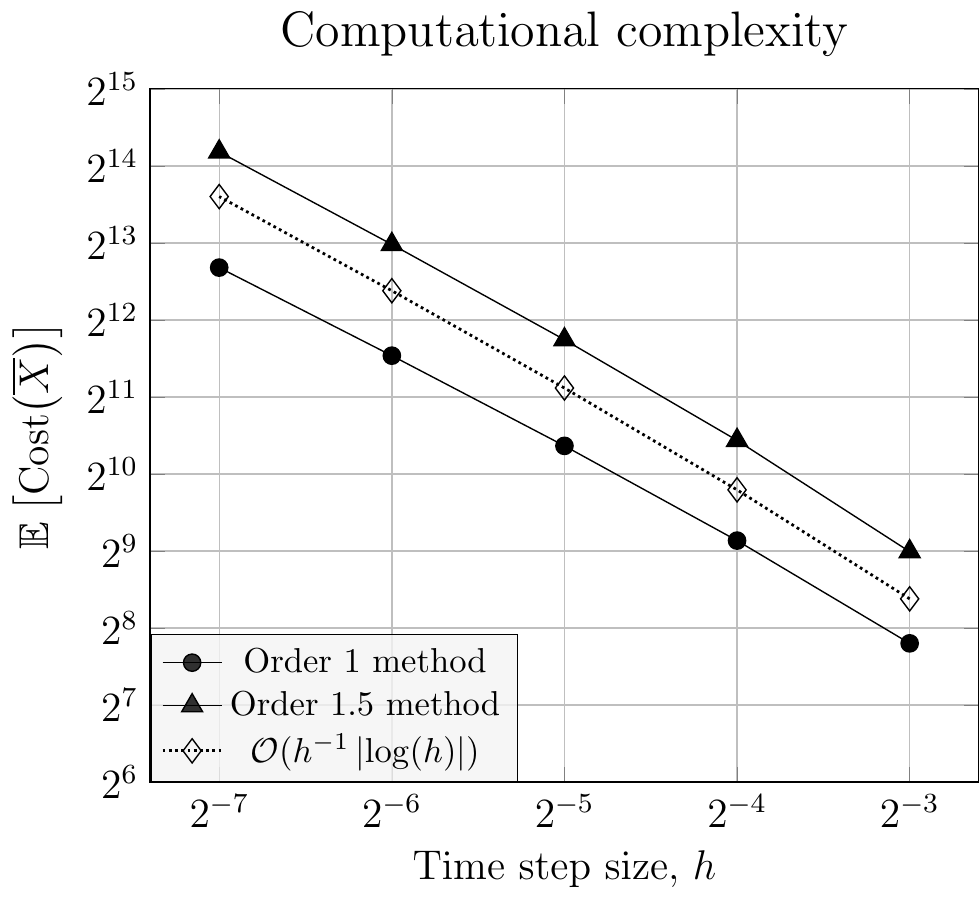}
        \includegraphics[width=0.47\textwidth]{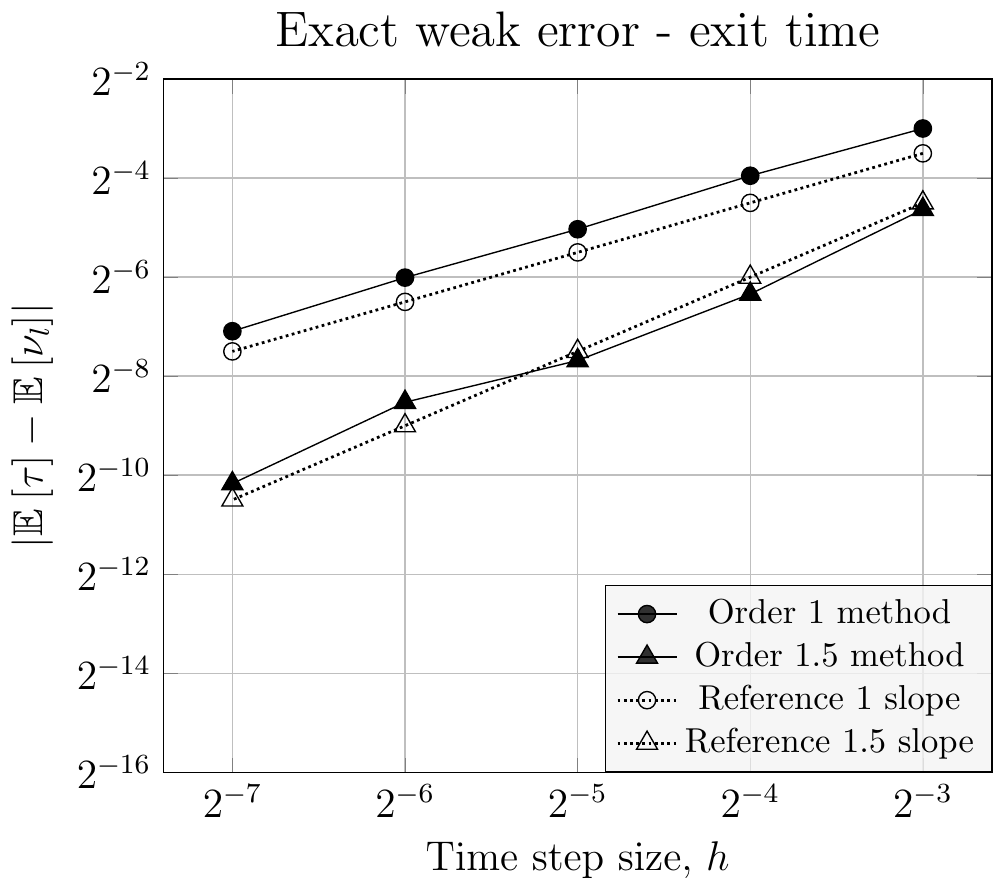}
        \caption{Error convergence rates and total computational complexity involved
          in implementing order 1 and order 1.5 adaptive methods for computing the
          exit time of the GBM process defined in Section \ref{subsec:numerics_1d_gbm}.}
        \label{fig:gbm_1d}
    \end{figure}
    
    \subsection{Linear drift, cosine diffusion coefficient - 1D}
    \label{subsec:numerics_1d_cosine}
    
    For the SDE 
    \begin{align*}
        \rdX &= 0.1 X \,\rdt+ 0.3 \big(\cos(X) + 3 \big) \,\rdW \\
        X(0) &= 4, 
    \end{align*}
    we study the exit time from the interval $D = (1, 7)$. The
    reference solution to the mean exit time problem was computed by
    solving the Feynman--Kac PDE using the same numerical method as in
    the preceding example, yielding $\E{\tau} = 5.504741$, rounded to
    7 significant digits. We run $M = 10^{7}$ simulations for our
    Monte Carlo estimates using the time-step parameter
    $h = {2^{-3}, 2^{-4}, 2^{-5}, 2^{-6}, 2^{-7}}$. From Figure
    \ref{fig:cosine_1d}, the rates for strong error the computational
    cost are in close agreement with theory, and we observe that for
    the given sample size and range of $h$-values, the rate for the
    weak error is more reliably estimated by the sample mean of
    pairwise coupled realizations $\nu_l - \nu_{l-1}$ than the sample
    mean of $\E{\tau} -\nu_\ell$. The reason is that due to the pairwise coupling, the variance of
    $\nu_l - \nu_{l-1}$ is smaller than that of $\E{\tau} -\nu_l$.
    \begin{figure}[h!]
        \centering
        \includegraphics[width=0.47\textwidth]{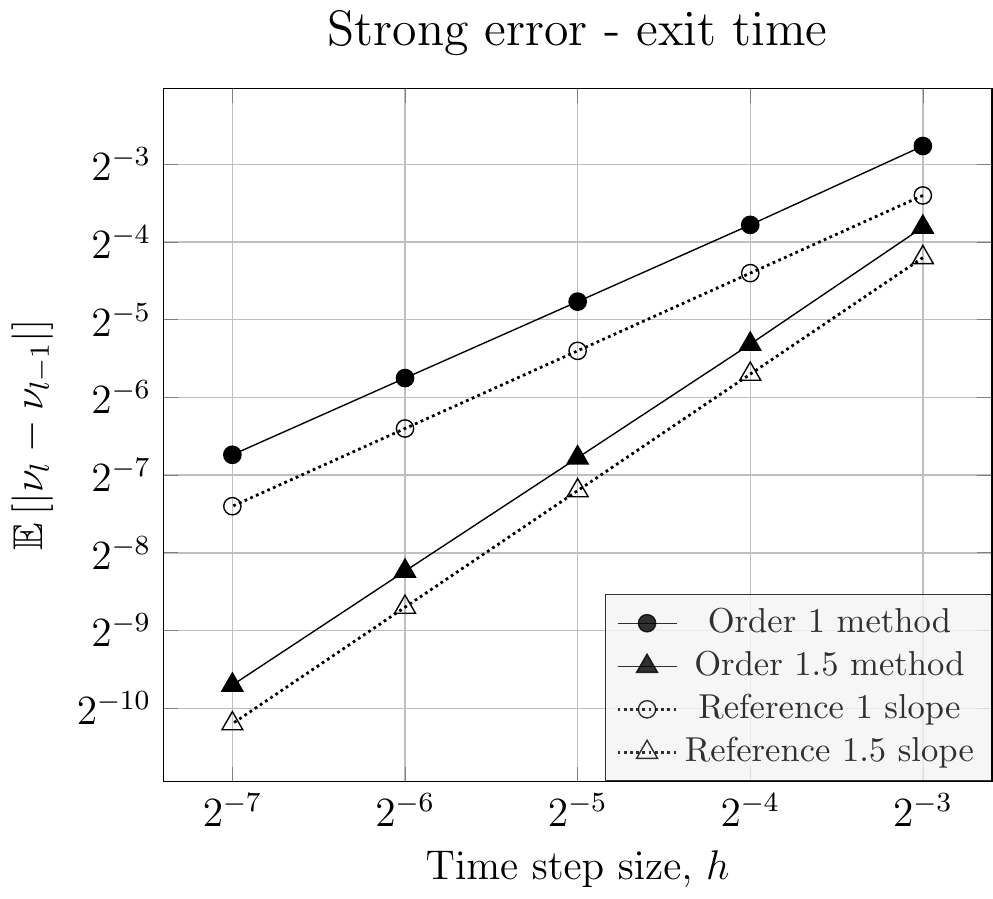}
        \includegraphics[width=0.47\textwidth]{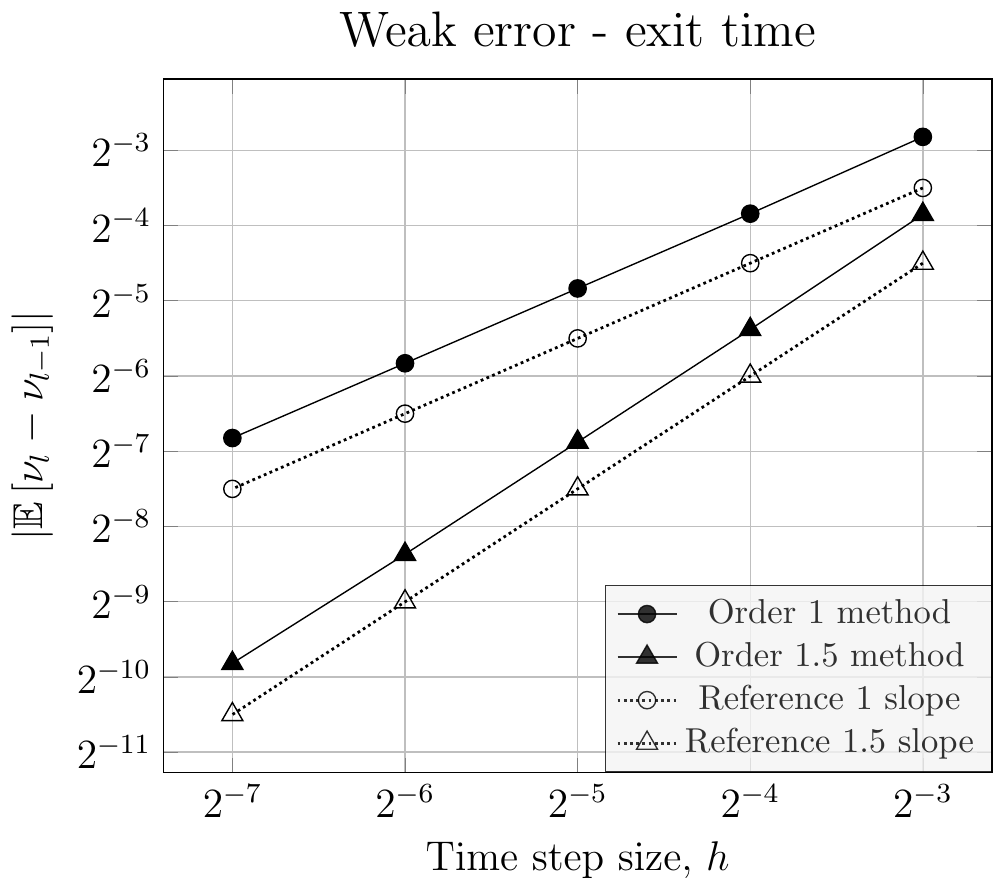}
        \centering
        \includegraphics[width=0.47\textwidth]{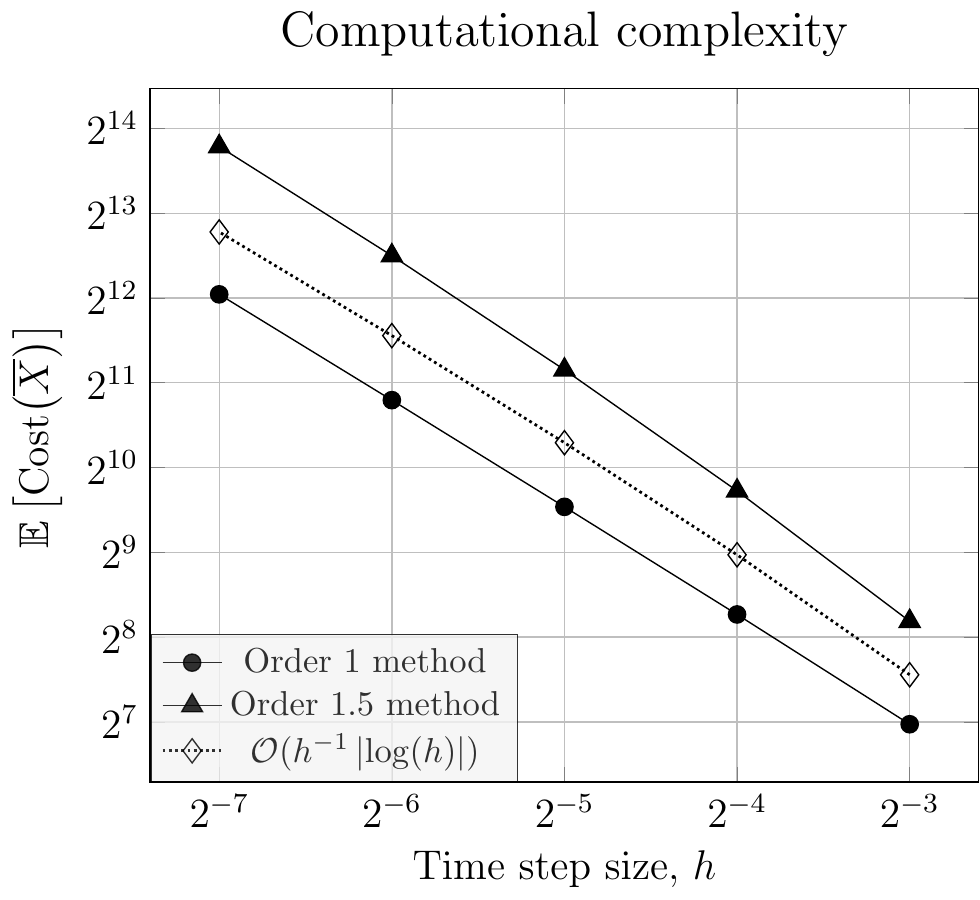}
        \includegraphics[width=0.47\textwidth]{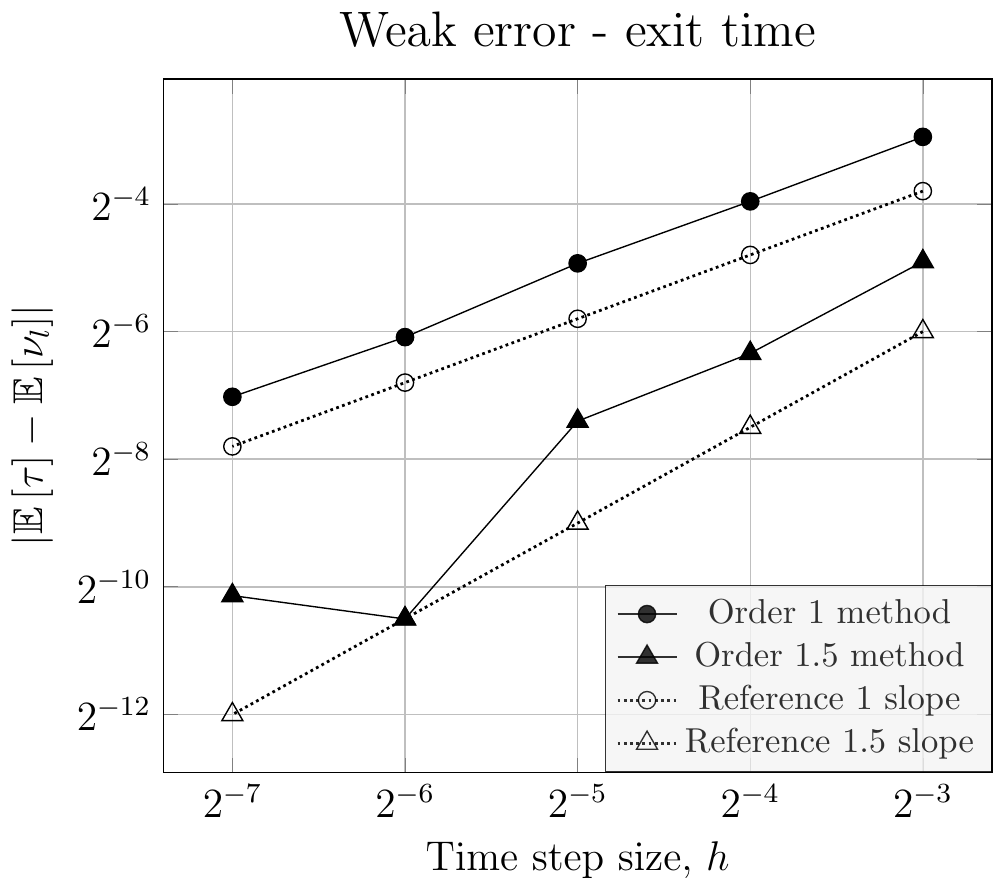}
        \caption{Error convergence rates and total computational complexity involved in implementing
          order 1 and order 1.5 adaptive methods for computing the exit time of SDE with linear drift
          coefficient and non-linear diffusion coefficient in Section \ref{subsec:numerics_1d_cosine}.}
        \label{fig:cosine_1d}
    \end{figure}
    
    We next consider two higher-dimensional exit time problems. 
    
    \subsection{Linear drift, linear diffusion - 2D}
    \label{subsec:numerics_2d_gbm}
    We consider the SDE
    \begin{align*}
        \rdX_{1} &= 0.05 X_{2} \,\rdt+ 0.2 X_{1} \,\rdW^1\\
            \rdX_{2} &= 0.05 X_1 \,\rdt+ 0.2 X_2 \,\rdW^2
    \end{align*}
    with initial condition $  X(0) = (3, 3)^{\top}$, and we are interested in computing the exit time from the disk 
    \begin{align}
        \label{eq:circular_2d_domain}
        D \coloneqq \{ x \in \bR^{2} \; \big| \; \sqrt{(x_{1} - 3)^{2} + (x_{2} - 3)^{2}} < 3 \}\, .
    \end{align}
    The reference solution to the mean exit time problem is computed
    by numerically solving the Feynman--Kac PDE, yielding
    $\E{\tau} = 6.7737$, rounded to 5 significant digits. We
    estimate the sample moments using $M = 10^7$ samples
    for $h = 2^{-3}, 2^{-4}, 2^{-5}, 2^{-6}, 2^{-7}$. Figure
    \ref{fig:gbm_2d} shows that the rates for the strong error and
    computational cost match those from theory and that for the given
    sample size, the weak error rate is more reliably estimated
    by samples of $\nu_l - \nu_{l-1}$ than by samples of $\E{\tau} -\nu_l$.
    
    \begin{figure}[h!]
      \centering
        \includegraphics[width=0.47\textwidth]{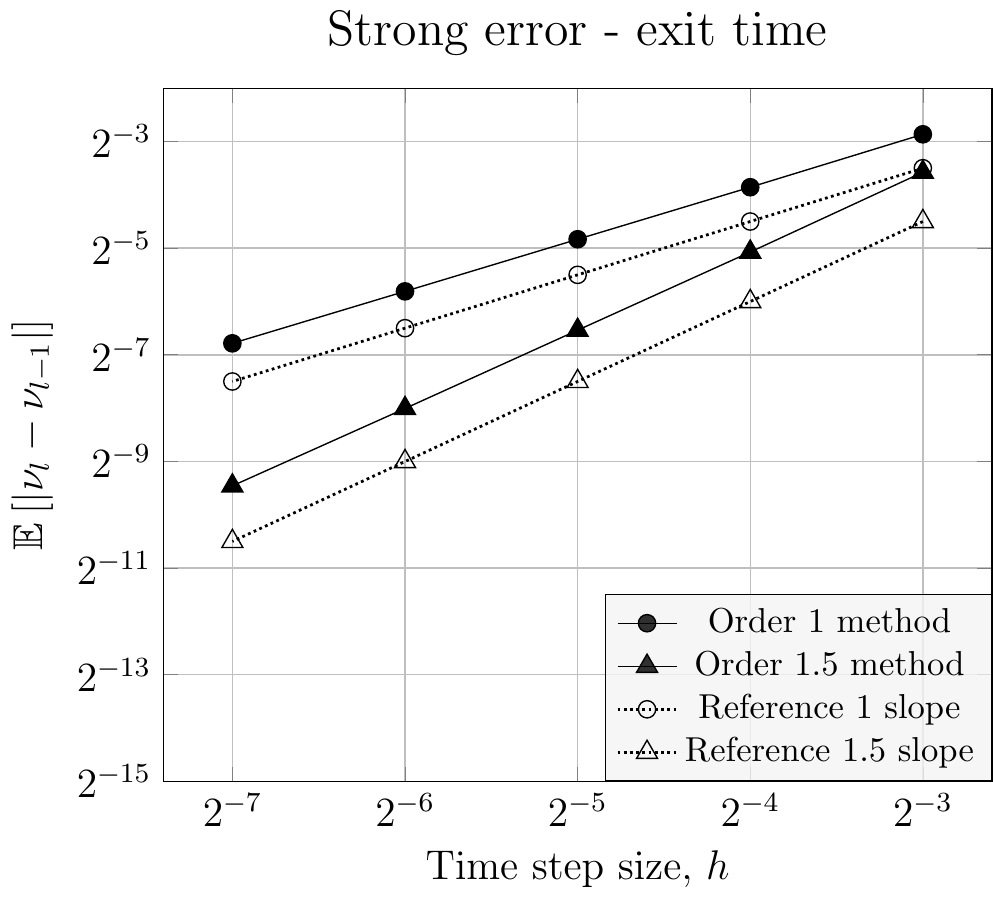}
        \includegraphics[width=0.47\textwidth]{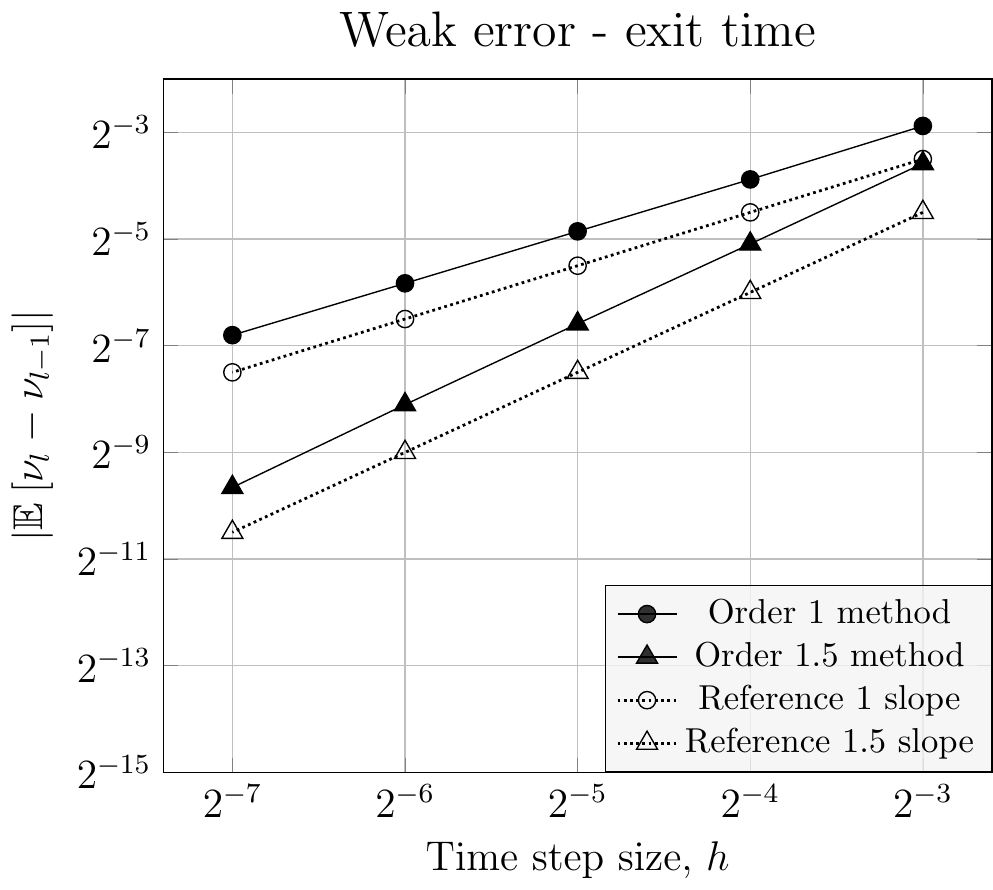}
        \centering
        \includegraphics[width=0.47\textwidth]{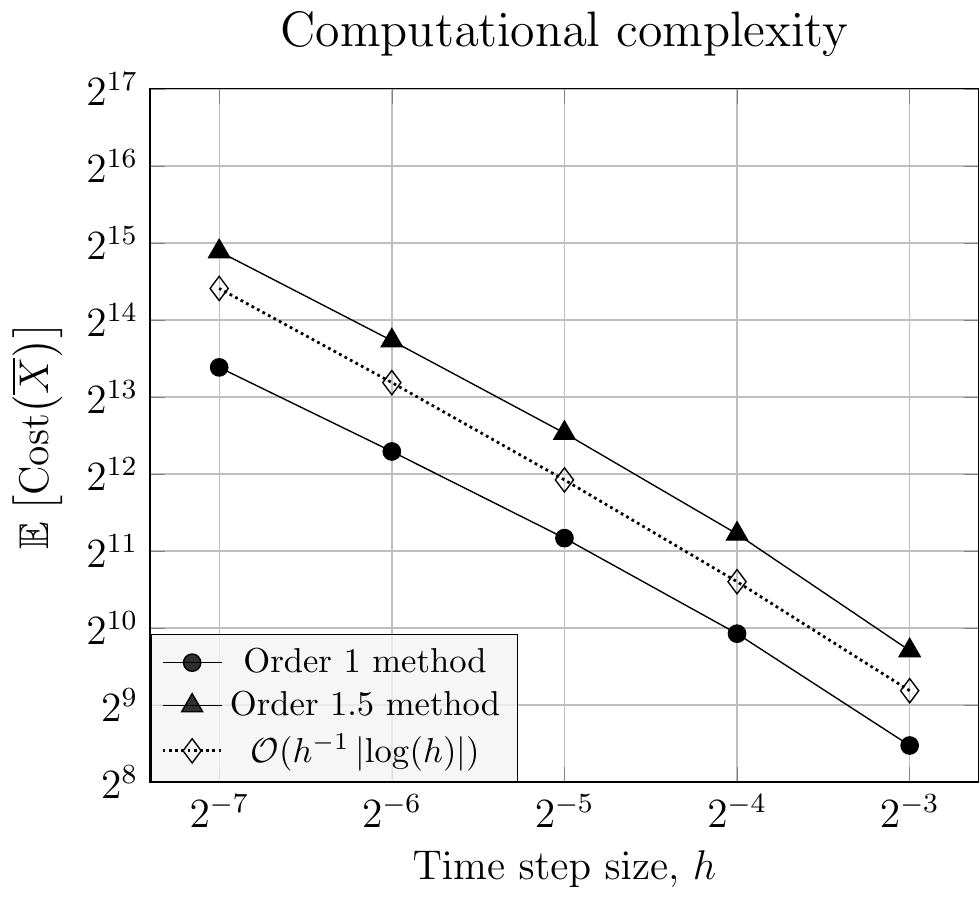}
        \includegraphics[width=0.47\textwidth]{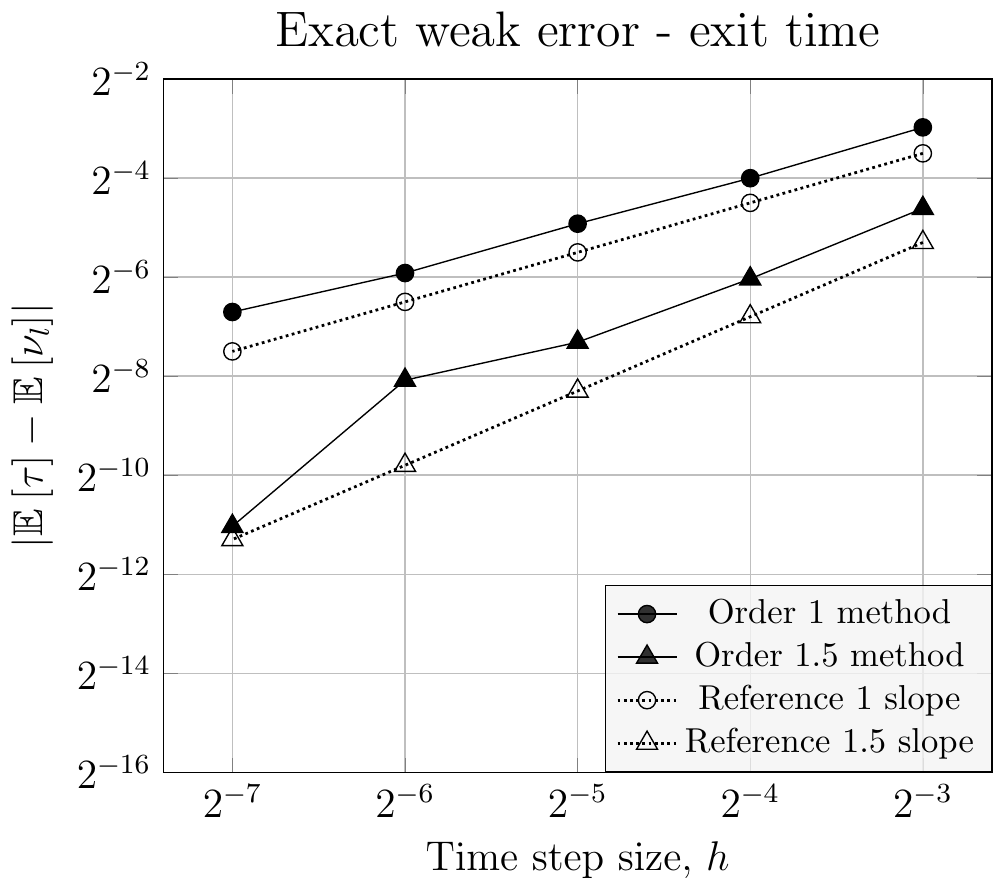}
        \caption{Error convergence rates and total computational cost involved in implementing order
          1 and order 1.5 adaptive methods for computing the exit time of SDE with linear drift
          coefficient and linear diffusion coefficient in Section~\ref{subsec:numerics_2d_gbm}.}
        \label{fig:gbm_2d}
    \end{figure}
    
    \subsection{Linear drift, cosine diffusion - 2D}
    \label{subsec:numerics_2d_cosine}
    We consider the following two-dimensional SDE with linear drift and non-linear, diagonal diffusion coefficient:
    \begin{align*}
        \rdX_{1} &= 0.1 X_{2} \,\rdt+ 0.25 \big( \cos(X_{1}) + 3 \big) \,\rdW^1\\
            \rdX_{2} &= 0.1 X_1 \,\rdt+ 0.25 \big( \cos(X_{2}) + 3 \big) \,\rdW^2,
    \end{align*}
    and with initial condition $X(0) = (3, 3)^{\top}$.  Note that the
    components of the SDE are coupled through the drift term,
    similarly as for the SDE in
    Section~\ref{subsec:numerics_2d_gbm}. We compute the exit
    time of the SDE from the disk domain given by
    equation~\eqref{eq:circular_2d_domain}. The reference solution to
    the mean exit time problem is is computed by numerically solving
    the Feynman--Kac PDE, yielding $\E{\tau} = 5.0853$, rounded to 5
    significant digits.
    The weak and strong errors are estimated using
    $M = 10^{7}$ samples for $h \in [2^{-3}, 2^{-7}]$, and, to reach
    the asymptotic regime of the computational cost, we have estimated
    it over the a range of smaller values, $h \in [2^{-4}, 2^{-14}]$,
    using $M=10^3$ (due to the bump in cost, and, luckily, cost
    estimates do not appear very sensitive to the sample size).
    Figure \ref{fig:cosine_2d} shows that numerical results support
    theory and that for the given sample size, the weak error rate is
    more reliably estimated by the samples of $\nu_l -\nu_{l-1}$ than
    samples of $\E{\tau} - \nu_l$.  Note also that the asymptotic
    regime for the computational cost is not reached at $h=2^{-7}$
    for neither of the methods, but that the cost is proportional
    to $h^{-1} |log(h)|$ for smaller $h$-values.
    \begin{figure}[h!]
      \centering
      \includegraphics[width=0.47\textwidth]{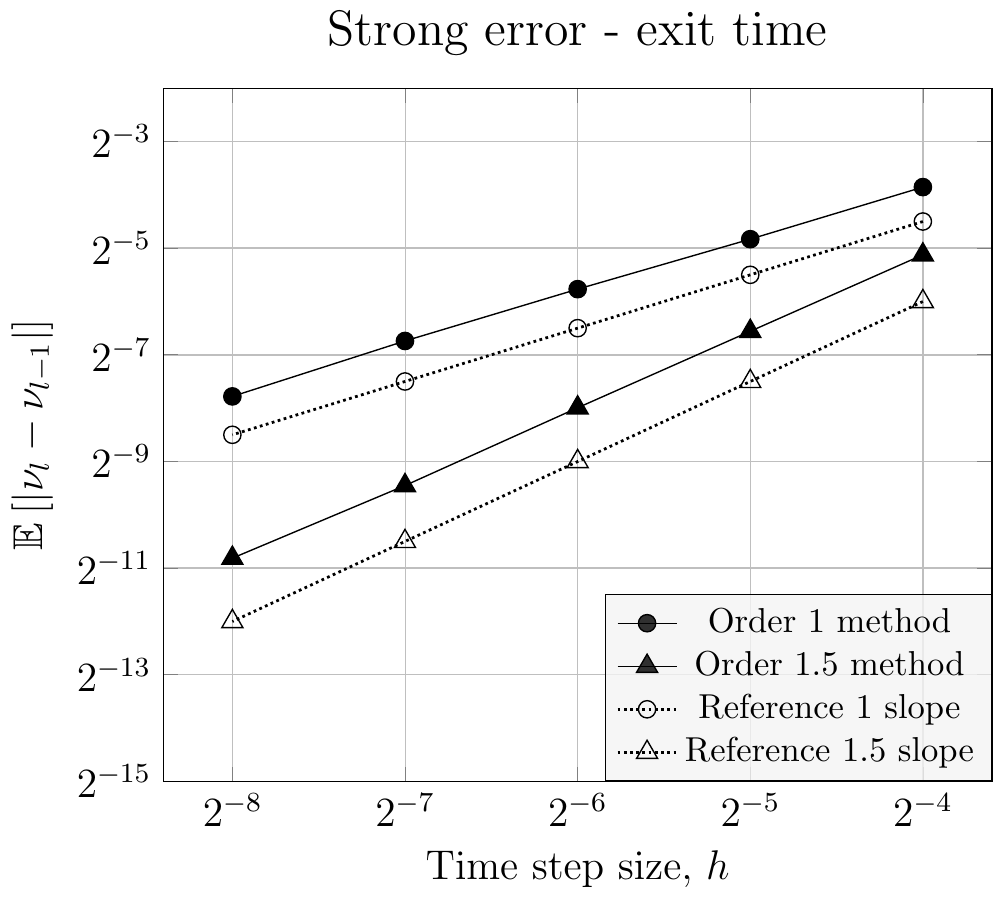}
      \includegraphics[width=0.47\textwidth]{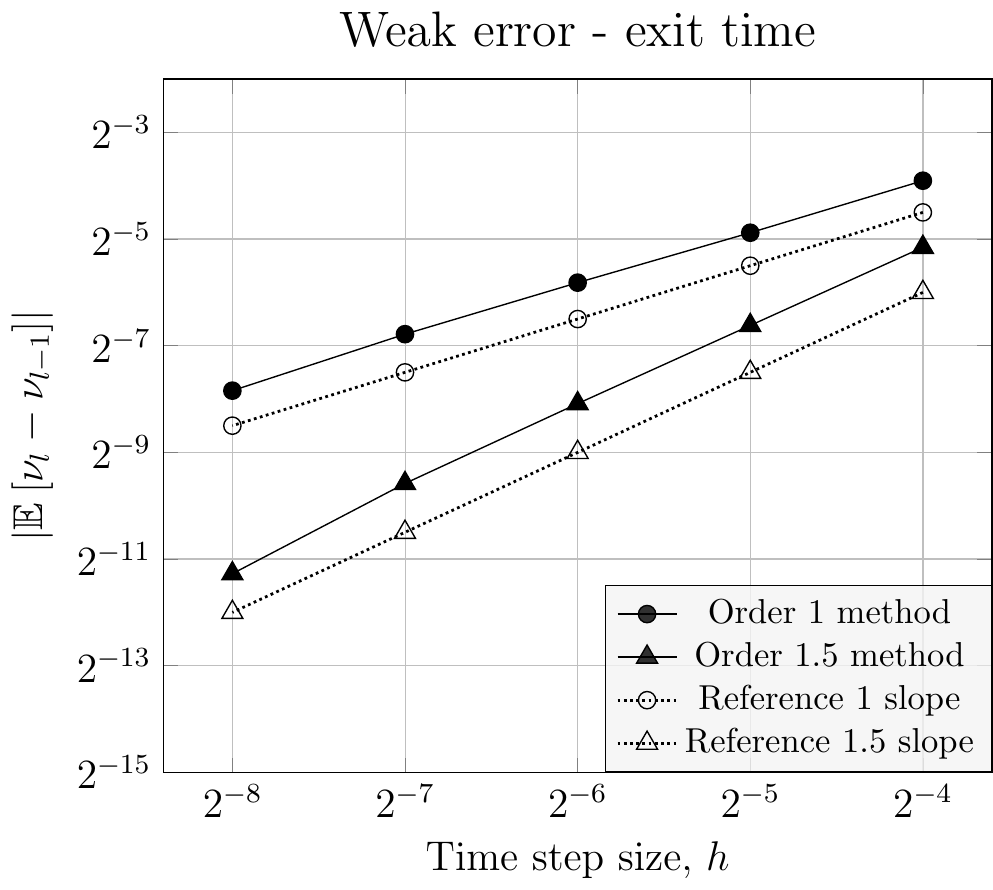}
      \centering
      \includegraphics[width=0.47\textwidth]{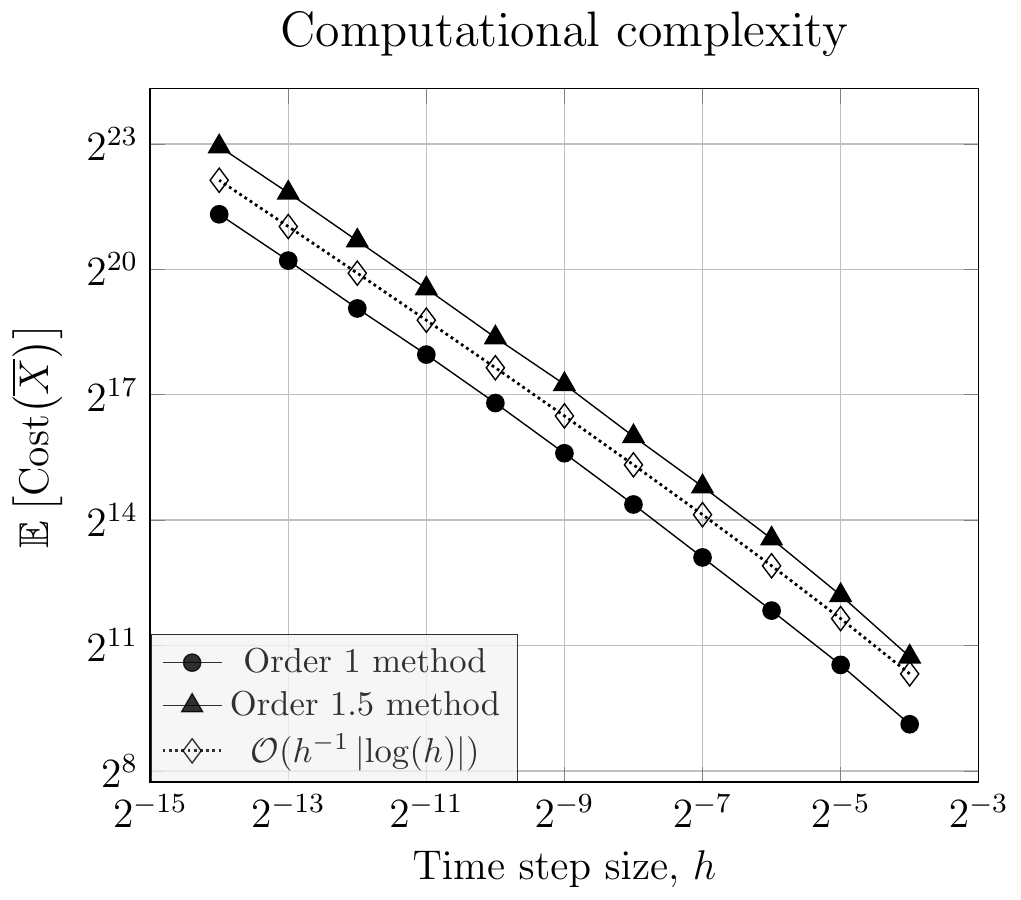}
      \includegraphics[width=0.48\textwidth]{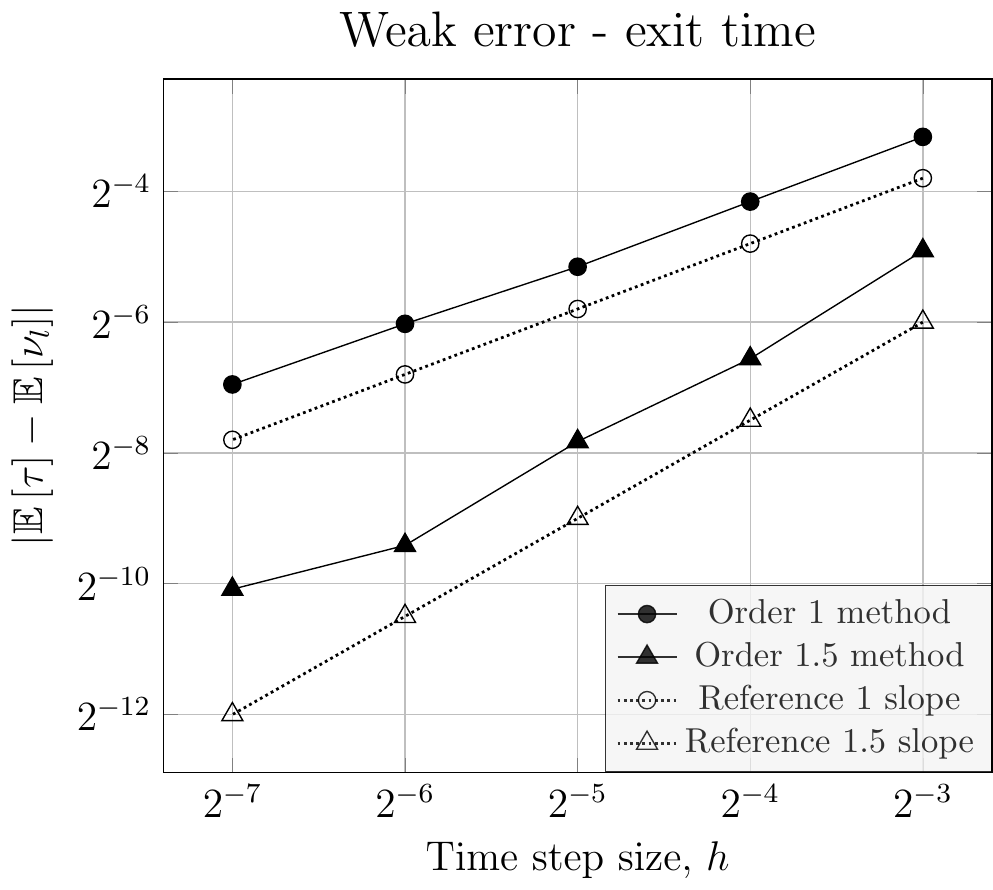}
      \caption{Error convergence rates and total computational cost involved in implementing
        order 1 and order 1.5 adaptive methods for computing the exit time of SDE with linear
        drift coefficient and non-linear diffusion coefficient in Section~\ref{subsec:numerics_2d_cosine}.}
        \label{fig:cosine_2d}
    \end{figure}

    \section{Conclusion}\label{sec:conclusion}
    In this paper, we developed a tractable higher-order adaptive
    time-stepping method for the strong approximation of exit times of
    It\^o-diffusions. We theoretically prove that the Milstein scheme
    combined with adaptive time-stepping with two different step sizes
    lead to a strong convergence rate of $\cO(h^{1-})$, and that the
    strong order 1.5 It\^o--Taylor scheme combined with adaptive
    time-stepping with three different step sizes lead to a strong
    convergence rates of $\cO(h^{3/2-})$. We also showed that the
    expected computational cost for both methods are bounded by
    $\cO(h^{-1} \abs{\log(h)})$. The fundamental idea in our
    approach, recurring in both of the aforementioned methods, is to
    use smaller step sizes as the numerical solution gets closer to
    the boundary of the domain, and to use higher-order
    integration schemes for better approximation of the state of the
    It\^o-diffusion. This reduces both the magnitude of overshoot
    when the numerical solution exits the domain and the probability
    of the numerical solution missing an exit of the domain by the exact
    process.
    
    There are several interesting ways to extend the current work. One
    direction would be to consider strong It\^o--Taylor schemes of
    order $\gamma > 3/2$ combined with adaptive time-stepping that
    employs more than two critical regions, i.e. for some
    $h \in (0, 1)$, we consider time-step sizes finer than $h^{3}$
    when the numerical solution is very close to the boundary
    $\partial D$. This way, the strong convergence rate can be further
    improved while maintaining a computational complexity that is
    tractable.
    
    Although the higher-order adaptive method has been devised for
    improving the strong convergence rate of exit times of
    It\^o-diffusions, the results easily extend to weak approximations
    of quantities of interest (QoI) that depend on the exit time and
    the state of the process:
    \begin{equation*}
        \mathrm{QoI} \coloneqq f(\tau, X_{\tau}) + \int_{0}^{\tau}  g(s, X_{s}) ds,
    \end{equation*}
    where we assume that $f, g$ are functions that are sufficiently smooth.
    
    In the implementation of the higher-order adaptive time-stepping
    algorithm, we use smaller time-step sizes as the numerical process
    gets closer to the boundary of the domain which significantly
    increases the expected computational cost. Combination of
    multilevel Monte Carlo and adaptive
    time-stepping~\cite{hoel2012adaptive,hoel2014implementation,fang2020adaptive,gilesnonnested2016,katsiolides2018multilevel,kelly2018adaptive}
    should significantly reduce the computational cost by using more
    Monte Carlo samples at the coarser level, where we use more
    degrees of freedom larger time-step size, and fewer Monte Carlo
    samples at the finer level, where we use a smaller time-step
    size. To implement the MLMC method, one requires the fine and the
    coarse trajectories of the stochastic process to be strongly
    coupled while also ensuring that the telescoping-sum property is
    satisfied. A smart way to couple solutions of the Euler--Maruyama
    method on non-nested adaptive meshes has been developed
    in~\cite{gilesnonnested2016}, and it would be interesting to study
    how extensible that approach is to higher-order methods, using for
    instance the coupling procedure in~\cite[Example
    1.1]{hoel2019central}.
    
    Finally, in \cite{giles2018multilevel}, a new multilevel Monte
    Carlo methodology to compute the mean exit time or a QoI that
    depends on the exit time of a stochastic process was
    developed. The new MLMC method reduces the multilevel variance by
    computing the approximate conditional expectation once the coarse
    or fine trajectory has exited the domain. Combining our ideas on
    higher-order, adaptive time-stepping with the conditional MLMC
    method has the potential to reduce the computational cost of exit
    time simulations even further. This makes an interesting problem
    for future research.

    \appendix 
    \section{Theoretical results}\label{appendix:contSurfaceArea}

\begin{lemma}\label{lem:gradPBound}
Let Assumptions~\ref{assm:domain} and~\ref{assm:coeffs} hold, and 
for the initial condition $x_0 \in D$ of the SDE~\eqref{eq:SDE} 
let $\lambda :=d(x_0, \partial D)/2$ and let $\bar \delta >0$ be 
sufficiently small such that 
$\bar \delta < \bar R_D/2$ and $d(x_0, \Gamma_{2 \bar \delta})>\lambda$,
where we recall that $\Gamma_{2 \delta} := D_{2\delta}\setminus D_{-2\delta}$
for $\delta \in [0,\bar \delta]$.
Then, there exists a constant $C_p>0$ such that the absorbing-boundary Fokker--Planck equation~\eqref{eq:fokkerPlanckPDE} satisfies 
\[
  \max_{(t,x) \in [0,T]\times \overline \Gamma_{2 \delta}} p(t,x;\delta) \le 4 C_p \delta, \qquad \forall \delta \in [0,\bar \delta],
\] 
where $C_p$ is independent of $\delta$. 
\end{lemma}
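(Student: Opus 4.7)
The strategy is to establish the Lipschitz-type estimate
\begin{equation*}
p(t,x;\delta) \le C_p \, d(x,\partial D_{2\delta}) \qquad \text{uniformly in } (t,x,\delta) \in [0,T] \times \overline D_{2\delta} \times [0,\bar\delta],
\end{equation*}
valid for $x$ in a one-sided neighborhood of $\partial D_{2\delta}$ whose width is uniform in $\delta$, and then to observe that every $x \in \overline\Gamma_{2\delta}$ satisfies $d(x,\partial D_{2\delta}) \le 4\delta$. The geometric inequality follows from the uniform ball property and the diffeomorphism~\eqref{eq:diffeomorphism}: if $x \in D\setminus D_{-2\delta}$, projecting $x$ onto $\partial D$ and then translating outward along $n_D$ by $2\delta$ reaches $\partial D_{2\delta}$ at total distance $d(x,\partial D)+2\delta \le 4\delta$, while if $x \in D_{2\delta}\setminus D$ the corresponding reverse construction gives $d(x,\partial D_{2\delta}) \le 2\delta$.

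The first ingredient is a uniform interior $L^\infty$-bound on $p(\cdot,\cdot;\delta)$ over $\overline\Gamma_{2\delta}$. Because $D_{2\delta}\subset D_{2\bar\delta}$, monotonicity of the survival events $\{\tau_{2\delta}>t\} \subset \{\tau_{2\bar\delta}>t\}$ yields the pointwise comparison $p(t,x;\delta) \le p(t,x;\bar\delta)$ on $[0,T]\times D_{2\delta}$. The reference density $p(\cdot,\cdot;\bar\delta)$ solves a \emph{fixed} parabolic problem with smooth uniformly elliptic coefficients and a $C^3$ boundary, so standard parabolic interior regularity (cf.~\cite[Chapter~3]{friedman64}) makes it continuous, and therefore bounded by some $M<\infty$, on the compact set $[0,T]\times \overline \Gamma_{2\bar\delta}$, which by hypothesis is separated from the singular source $(0,x_0)$ by at least $\lambda$. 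Since $\Gamma_{2\delta} \subset \Gamma_{2\bar\delta}$ whenever $\delta \le \bar\delta$, this bound descends to $p(\cdot,\cdot;\delta) \le M$ on $[0,T]\times \overline\Gamma_{2\delta}$ uniformly in $\delta$.

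The second ingredient is a $\delta$-uniform boundary gradient bound $|\nabla p(t,x;\delta)| \le C_p$ on a one-sided neighborhood of $\partial D_{2\delta}$ of uniform width. We pull the Fokker--Planck equation~\eqref{eq:fokkerPlanckPDE} back through the normal-coordinate diffeomorphism $\pi_{2\delta}$ of~\eqref{eq:diffeomorphism}, which flattens a tubular neighborhood of $\partial D_{2\delta}$ onto the corresponding tubular neighborhood of $\partial D$. The discussion below Assumption~\ref{assm:domain}, together with the lower bound $\reach(D_{2\delta}) \ge \reach(D)/2$ established in Section~\ref{subsec:notAndAssumptions}, gives $\delta$-uniform $C^3$-bounds on $\pi_{2\delta}$ and hence $\delta$-uniform $C^{0,\alpha}$-bounds on the coefficients of the pulled-back equation; Step~1 supplies $\delta$-uniform interior data. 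Classical parabolic boundary Schauder estimates on the fixed reference domain then deliver the desired uniform gradient bound. Combined with $p(t,\cdot;\delta)\equiv 0$ on $\partial D_{2\delta}$ and integration of $\nabla p$ along the inward normal, this yields the Lipschitz estimate, and specialization to $\overline\Gamma_{2\delta}$ gives $p(t,x;\delta) \le 4C_p \delta$.

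The main obstacle is precisely the $\delta$-independence of $C_p$. Without first passing to a fixed reference domain via $\pi_{2\delta}$, the Schauder constants for the family of $\delta$-dependent problems have no a priori reason to remain bounded as $\delta \to 0$, and it is the uniform geometric control of $\partial D_{2\delta}$ (uniform reach, uniformly bounded $C^3$-norms of $\pi_{2\delta}$) developed in Section~\ref{subsec:notAndAssumptions} that circumvents this. A secondary concern is the singular time-space point $(0,x_0)$, which is neutralized by the standing hypothesis $d(x_0,\Gamma_{2\bar\delta})>\lambda$ ensuring a positive separation between $x_0$ and every region where the bound is applied.
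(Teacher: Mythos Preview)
Your strategy is sound and reaches the same conclusion as the paper, but the route is genuinely different. The paper does \emph{not} work with $p$ directly. Instead it decomposes $p(t,x;\delta)=\widehat\Gamma(t,x;0,x_0)+V(t,x;\delta)$, where $\widehat\Gamma$ is a fundamental solution of a global extension of the Fokker--Planck operator (hence independent of $\delta$ and $C^{1,2}_b$ away from $(0,x_0)$), and $V$ solves the same PDE on $D_{2\delta}$ with zero initial data and lateral boundary data $-\widehat\Gamma$. The maximum principle then gives $|V|\le C_{\widehat\Gamma}$ uniformly in $\delta$, and the boundary gradient is obtained from Lieberman's barrier lemma \cite[Lemma~10.1]{lieberman96}, whose constant depends only on a uniform exterior-cylinder radius (supplied by $\reach(D_{2\delta})\ge\reach(D)/2$), the uniform ellipticity constants, and the uniform bound $C_{\widehat\Gamma}$ on $V$ and on the boundary data. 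Adding the Lipschitz bound for $\widehat\Gamma$ gives $p(t,x;\delta)\le C_p\,d(x,\partial D_{2\delta})$, and the geometric inequality $d(x,\partial D_{2\delta})\le 4\delta$ on $\overline\Gamma_{2\delta}$ finishes as you do.

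What each approach buys: your monotonicity argument $p(\cdot;\delta)\le p(\cdot;\bar\delta)$ for the sup bound is clean and avoids the fundamental-solution machinery. On the other hand, the paper's decomposition isolates the initial singularity into $\widehat\Gamma$ once and for all, so the term $V$ carrying the $\delta$-dependence has genuinely smooth data everywhere; Lieberman's barrier lemma then makes the $\delta$-uniformity of the boundary gradient completely explicit (the constant is written in terms of $\hat c_b,\widehat C_b$, a coefficient bound, $C_{\widehat\Gamma}$, and the uniform cylinder radius $\tilde r=\bar R_D$). Your pullback-plus-Schauder step is correct in principle, but note that $\pi_{2\delta}$ in~\eqref{eq:diffeomorphism} only maps $\partial D\to\partial D_{2\delta}$; you are tacitly extending it to a tubular-neighborhood diffeomorphism, and the assertion that the pulled-back coefficients have $\delta$-uniform $C^{0,\alpha}$ norms (and that the Schauder constant is therefore uniform, including down to $t=0$) is precisely where the work lies. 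The paper's use of the barrier lemma sidesteps this bookkeeping.
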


\begin{proof}
  Following~\cite[Chapter 3.7]{friedman64}, the solution of the Fokker--Planck equation can be decomposed into the  sum of two functions:
  \[
    p(t,x;\delta) = \widehat \Gamma(t,x;0,x_0) + V(t,x;\delta),
  \]
  where $\widehat \Gamma$ denotes a fundamental solution to a global parabolic extension of the Fokker-Planck PDE from the domain $D_{2 \bar \delta}$ to $\bR^d$ (making it a fundamental solution for $p(t,x;\delta)$ for every $\delta\in [0, \bar \delta]$) 
  and $V$ is a boundary correction term solving the Fokker-Planck equation
  \begin{align*}
    \partial_t V &=  -\nabla \cdot (a V) + \frac{1}{2}  \nabla \cdot \rb{bb^{\top} \nabla V}    && \text{in}  \quad (0,T] \times D_{2\delta}, \\
   V(t,x;\delta)  &= -\widehat \Gamma(t,x;0,x_0)  && \text{on} \quad  (0,T] \times \partial D_{2\delta}\\
   V(0,x;\delta) &= 0 && \quad  x\in \overline D_{2\delta}. 
  \end{align*}
  Assumption~\ref{assm:coeffs} and~\cite[Chapter 1.6]{friedman64}
  implies that $\widehat \Gamma \in C^{1,2}_b([0,T]\times \overline{\Gamma}_{2 \bar \delta})$.
  Let  
  \[
    C_{\widehat \Gamma} := \max_{(x,t) \in [0,T]\times \overline{\Gamma}_{2 \bar \delta}} 
    \widehat \Gamma + |\partial_t \widehat \Gamma| + |\nabla \widehat \Gamma| + |\nabla^2 \widehat \Gamma|,
  \] 
  and note that also the boundary data for $V$,
  \[
  \varphi(t,x;\delta) = 
  \begin{cases}
    -\widehat \Gamma(t,x;0,x_0) & (t,x) \in (0,T]\times \partial D_{2\delta}\\
    0 & (t,x) \in \{0\} \times \overline D_{2\delta}
  \end{cases}
  \]
  satisfies $\varphi \in C^{1,2}_b\Big( \big((0,T]\times \partial D_{2\delta} \big)\cup \big(\{0\} \times \overline D_{2\delta}\big)\Big)$ with the following uniform upper bound 
  over all points on the boundary domain
  \[
    |\varphi| + |\partial_t \varphi| + |\nabla \varphi| + |\nabla^2 \varphi| \le C_{\widehat \Gamma} \qquad \forall \delta \in [0,\bar \delta].  
  \]

  The boundary gradient of $V$ will be bounded 
  using~\cite[Lemma 10.1]{lieberman96}, so we need to verify 
  that the lemma applies to $V$ for any $\delta \in [0,\bar \delta]$. 
  Note first that by the maximum principle, it holds that 
  $|V|\le C_{\widehat \Gamma}$ for all 
  $\delta \in [0,\bar \delta]$. 
  Assumption~\ref{assm:domain} and $\bar R_D < \reach(D)/2$ 
  further implies that $D_{2\delta}$ satisfies the uniform ball condition 
  with $\reach(D_{2\delta}) > \reach(D)/2>\bar R_D$ for all 
  $\delta \in [0,\bar \delta]$. Therefore, any point $x \in \partial D_{2\delta}$
  for any $\delta \in[0,\bar \delta]$ satisfies the infinite exterior cylinder
  condition with radius $\tilde r := \bar R_D$, cf.~\cite[Lemma 10.1]{lieberman96}.
  Thanks to Assumption~\ref{assm:coeffs} there exists a constant 
  $C_{ab}>0$ that depends on the supremum of the SDE coefficents $a$ and $b$ 
  and their first and second partial derivatives on $D_{\bar R_D}$ 
  such that the left-hand side of~\cite[inequality (10.6)]{lieberman96}
  is bounded from above by 
  \[
    g_L(p) := 1+ \Big(\widehat C_b + C_{ab}(C_{\widehat \Gamma} +1) \Big)|p| \quad \text{for} \quad p \in \bR^d
  \]
  while the $\mu$-scaled Bernstein coefficient on the right-hand side
  of the same inequality is bounded from below by $g_R(p; \mu) := \mu \hat c_b |p|^2$, 
  where $\mu>0$ is the scaling parameter and $\widehat C_b>\hat c_b>0$ 
  are defined in Assumption~\ref{assm:coeffs}. This implies that one 
  can find constants $\mu, p_0>0$ that are independent of $\delta$ such that 
  \[
  g_L(p) \le g_R(p; \mu)\qquad \text{for all}\quad  p \in \bR^d \quad \text{such that} \quad |p| \ge p_0.
  \]
  
  We have shown that~\cite[Lemma 10.1]{lieberman96} applies, 
  which means there exists a constant $C_V>0$ that depends on 
  $\mu,p_0$, the uniform infinite exterior cylinder radius $\tilde r$ 
  and $C_{\widehat \Gamma}$ such that 
  \[
  \sup_{(t,x,y) \in [0,T] \times D_{2\delta} \times \partial D_{2\delta}} 
  |V(t,x;\delta) - V(t,y;\delta)| \le C_V |x-y|   
  \]   
  holds for all $\delta \in [0,\bar \delta]$. And since 
  \[
    |p(t,x;\delta) - p(t,y;\delta)| \le |\widehat \Gamma(t,x) - \widehat \Gamma(t,y)| + |V(t,x;\delta) - V(t,y;\delta)|, 
  \]
  and $p(t,y;\delta) =0$ for $y \in \partial D_{2\delta}$, 
  we conclude that 
  \[
  \max_{(t,x) \in [0,T] \times \overline{\Gamma}_{2\delta}} 
  p(t,x;\delta) \le \underbrace{(C_{\widehat \Gamma} + C_V)}_{=:C_p} 
  \max_{x \in \overline{\Gamma}_{2\delta}} d(x,\partial D_{2\delta}) \le 4 C_p \delta.    
  \]

\end{proof}

\begin{lemma}\label{lem:cI_bounded}
  For a domain $D$ satisfying Assumption~\ref{assm:domain} and $R = {reach}(D)/2)$, the mapping $\cI:[-R,R] \to [0,\infty)$ defined by 
  \[
    \cI(r) = \int_{\partial D_{r}} dS(x),
  \]
  satisfies that $\max_{r \in [-R,R]} \cI(r) < \infty$.
\end{lemma}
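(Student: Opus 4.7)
The plan is to bound $\cI(r)$ by pulling the surface integral back to $\partial D$ via the diffeomorphism $\pi_r\colon \partial D \to \partial D_r$ defined in~\eqref{eq:diffeomorphism} (extended to negative $r$ by $\pi_r(y) = y + r n_D(y)$, which lies in $\partial D_{-|r|}$ when $r<0$, since the uniform ball property with $\reach(D) > 2R$ guarantees $\pi_r$ is a $C^3$-diffeomorphism for every $r \in [-R,R]$). The change-of-variables formula yields
\[
\cI(r) = \int_{\partial D_r} dS(x) = \int_{\partial D} J_r(y)\, dS(y),
\]
where $J_r(y)$ denotes the surface Jacobian of $\pi_r$ at $y$.

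The next step is to identify $J_r$ in terms of the principal curvatures $\kappa_1(y),\ldots,\kappa_{d-1}(y)$ of $\partial D$ at $y$, computed with respect to the outward normal $n_D$. A standard computation in differential geometry (writing $\pi_r$ in local coordinates adapted to principal directions, and using that $\nabla_{\partial D} n_D$ is the Weingarten map) gives the classical parallel-surface formula
\[
J_r(y) = \prod_{i=1}^{d-1} \bigl(1 + r\,\kappa_i(y)\bigr).
\]
Since $\partial D$ is $C^4$, the $\kappa_i$ are continuous on the compact set $\partial D$; more importantly, the uniform ball property implies $|\kappa_i(y)| \le 1/\reach(D)$ for every $y \in \partial D$ and every $i$.

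For $r \in [-R,R]$ with $R = \reach(D)/2$, this gives $|r\,\kappa_i(y)| \le 1/2$, hence each factor $1 + r\kappa_i(y) \in [1/2,\, 3/2]$. Therefore
\[
J_r(y) \le (3/2)^{d-1} \qquad \forall (r,y) \in [-R,R] \times \partial D,
\]
and integrating over $\partial D$ yields
\[
\cI(r) \le (3/2)^{d-1} \int_{\partial D} dS(y) = (3/2)^{d-1} \,|\partial D|.
\]
The surface area $|\partial D|$ is finite because $\partial D$ is a compact $C^4$ hypersurface (bounded by Assumption~\ref{assm:domain}), completing the argument: $\max_{r \in [-R,R]} \cI(r) \le (3/2)^{d-1}|\partial D| < \infty$.

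The only mildly delicate step is justifying the parallel-surface Jacobian formula with the correct sign conventions and ensuring the diffeomorphism property for negative $r$; both follow directly from the uniform ball property and the $C^3$-diffeomorphism statement established right after Assumption~\ref{assm:domain}, so I do not expect any real obstacle. Everything else is a standard compactness/continuity estimate.
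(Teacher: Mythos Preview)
Your proof is correct and takes a genuinely different route from the paper's. You pull the surface measure back along the diffeomorphism $\pi_r$ and invoke the classical parallel-surface (Steiner/Weyl) Jacobian formula $J_r(y)=\prod_{i=1}^{d-1}(1+r\kappa_i(y))$, then use the standard bound $|\kappa_i|\le 1/\reach(D)$ to obtain the explicit uniform estimate $\cI(r)\le (3/2)^{d-1}|\partial D|$. The paper instead works with the signed distance function $d(\cdot)$: writing $\cI(r)=\int_{\partial D_r}|\nabla d|^2\,dS$ and applying the divergence theorem on the shell between $\partial D$ and $\partial D_r$, it obtains $|\cI(r)-\cI(0)|\le \int_{\Gamma_R}|\Delta d|\,dx$, which is finite because $d\in C^3(\overline{\Gamma}_R)$. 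Your approach is more direct and yields an explicit constant, at the price of invoking the tube-formula Jacobian and the curvature bound from the uniform ball property; the paper's argument avoids any explicit curvature computation and stays at the level of the distance function and the divergence theorem, which fits more naturally with the tools already set up in Section~\ref{subsec:notAndAssumptions} (in particular the regularity of the signed distance via~\cite[Lemma~14.16]{gilbarg2015elliptic}). The two arguments are of course closely related underneath, since $\Delta d$ on $\partial D_r$ is exactly the sum of the principal curvatures of $\partial D_r$.
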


\begin{proof}
  For each $r \in [-R, R]$ we recall that $\partial D_{r}$ is $C^3$ and there exists a constant $C_r>0$ such that 
  \[
  \cI(r) \le C_r \int_{D_R} dx,
  \]
  cf.~\cite[Lemma 6.36]{lieberman96}.
  To obtain a uniform upper bound we introduce the following mapping on the domain $\overline{\Gamma}_R = \{x \in D_R \mid d(x, \partial D_R) \le 2R \}$:
  \[
  d(x) := 
  \begin{cases}
  d(x,D) & \text{if} \quad x \in D \cap \overline{\Gamma}_{R}\\
  -d(x,D) & \text{if} \quad x \in D^C \cap \overline{\Gamma}_{R}.
  \end{cases}
  \]
  A small extension of~\cite[Lemma 14.16]{gilbarg2015elliptic} yields that $d \in C^3(\overline{\Gamma}_{R})$ and that for any 
  $x \in \partial D_r$ and $r \in [-R, R]$,  
  \[
  - \nabla d(x) = n_{D}(\pi_r^{-1}(x)) = n_{D_r}(x),  
  \]
  where we recall that $n_{D_r}$ denotes the outward pointing normal on the 
  boundary $\partial D_r$. For any $r \in [-R, R]$ the divergence 
  theorem yields 
  \[
    \begin{split}
  |\cI(r) - \cI(0)| &= \left|\int_{\partial D_{r}}  |\nabla d(x)|^2 dS(x) - \int_{\partial D_0} |\nabla d(x)|^2 dS(x) \right|\\ 
  &\le \int_{\Gamma_{R}} |\Delta d(x)| dx < \infty,
    \end{split} 
  \]
  where the last inequality follows from $\Delta d \in C^1(\overline{\Gamma}_{ R})$. We conclude that 
  \[
  \max_{r \in [-R, R]} \cI(r) \le \cI(0) 
  + \max_{r \in [-R, R]}|\cI(r) - \cI(0)| < \infty. 
  \]  
\end{proof}

    \section{Computational cost for the order 1 method - 1D Wiener process}\label{appendix:a}

    In this section we use a more fundamental approach to deriving 
    an upper bound for the computational cost 
    of implementing the order 1 adaptive method for a
    one-dimensional Wiener process $W(t)$ exiting the interval
    $(a, b)$. This is possible thanks to the availability of 
    the joint density for the state of the process $W(t)$ 
    and its running maximum/minimum. 
    
    Let $\barW(t)$ denote the continuous time extension of
    the discretely sampled Wiener process. For $t > s \geq 0$, let
    $M_{W}(s, t)$ and $m_{W}(s, t)$ denote the running maximum and the
    running minimum of the one-dimensional Wiener process $W(t)$,
    respectively, over the interval $[s, t]$, i.e.
    \begin{align*}
        M_{W}(s, t) &\coloneqq \sup_{r \in [s, t]} \big( W(r) - W(s) \big) \\
        m_{W}(s, t) &\coloneqq \inf_{r \in [s, t]} \big( W(r) - W(s) \big) 
    \end{align*}
    Let the shorthands $M_{W}(t)$ and $m_{W}(t)$
    represent the running maximum and the
    running minimum of the Wiener process over the interval $[0, t]$,
    respectively. Recall that the running maximum and the negative of
    the running minimum of the Wiener process have the same
    distribution, cf. \cite[Section 3.6]{klebaner2012introduction}.
    
    The threshold parameter $\delta(h)$, for all $h \in (0, 1)$, is defined as
    \begin{align}
        \label{eq:adpt-order-1-wiener}
        \delta(h) \coloneqq \sqrt{4 h \log (h^{-1})}
    \end{align}
    We construct the set $A$ to bound the strides of the running maximum and the running minimum of the Wiener process
    \begin{equation}\label{eq:aSet_appendixProof}
    \begin{split}
        A &\coloneqq \left\{ \omega \in \Omega \; \big| \; M_{W}(t_{n}, t_{n+1}) \leq \delta, \forall n \in \{ 0, 1, \ldots, N - 1 \} \right\} \\
        & \qquad \cap \left\{ \omega \in \Omega \; \big| \; m_{W}(t_{n}, t_{n+1}) \geq -\delta, \forall n \in \{ 0, 1, \ldots, N - 1 \} \right\}
    \end{split}
  \end{equation}
    It can be shown for all $h < e^{-1}$, in a manner similar to the proof argument for Lemma~\ref{lem:strides-order1},
    and for the choice of $\delta$ given by equation \eqref{eq:adpt-order-1-wiener} that
    \[ \Prob{A^{C}} = \cO(h \abs{\log (h)}^{-1/2}) \]
    
    \begin{proposition}[Computational cost for the order 1 method - 1D Wiener process]
        For a sufficiently small step size parameter $h > 0$, the joint probability density of the tuple of random variables $(W(t), M_{W}(t))$ given by
        \begin{align*}
            \rho_{(W(t), M_{W}(t))}(w, m) \coloneqq \sqrt{\frac{2}{\pi}} \frac{2m - w}{t^{3/2}} \exp \left( -\frac{(2m - w)^{2}}{2t} \right), \quad m \geq 0, \; w \leq m
        \end{align*}
        is well-defined for any $t \in [h, T]$ and 
        \begin{align*}
            \sup_{(t,w,m) \in [h, T] \times [b-\delta, b) \times [b - \delta, b + \delta]} \rho_{(W(t), M_{W}(t))}(w, m) < \infty\, .
        \end{align*}
        It further holds that
        \[ \E{\cost{\barW}} = \cO(h^{-1} \abs{\log(h)})\, . \]
    \end{proposition}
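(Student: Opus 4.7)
The plan is to first verify that the stated joint density is well-defined and uniformly bounded on the critical region, and then to exploit that bound together with the fact that the Milstein scheme for the pure Wiener process is \emph{exact} at mesh points, which considerably simplifies the cost analysis compared to the proof of Theorem~\ref{thm:cost-order1}.

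For the density formula I would invoke the reflection principle for Brownian motion, cf.~\cite[Section 3.6]{klebaner2012introduction}, which yields the stated joint density of $(W(t), M_{W}(t))$ on $\{m \ge 0,\ w \le m\}$. For the uniform bound, observe that on the region $(t,w,m) \in [h,T] \times [b-\delta, b) \times [b-\delta, b+\delta]$ the quantity $u := 2m - w$ satisfies $u \in [b-2\delta,\, b+3\delta]$, which for sufficiently small $h$ lies in a fixed compact subinterval of $(0,\infty)$. Writing $K = u^{2}/2$, the density is a bounded multiple of $u\, t^{-3/2} e^{-K/t}$, and elementary calculus shows that $g(t) := t^{-3/2} e^{-K/t}$ attains its unique maximum on $(0,\infty)$ at $t^{*} = 2K/3$ with value of order $K^{-3/2}$. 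A case analysis on the position of $t^{*}$ relative to $[h,T]$ then gives a bound uniform in $h$: when $t^{*} \in [h,T]$ the supremum is bounded by the global maximum; when $t^{*} > T$ the supremum is at $t=T$; and when $t^{*} < h$ the function $g$ is decreasing on $[h,T]$, so the supremum is $h^{-3/2} e^{-K/h}$, which vanishes as $h \to 0$ since $K$ is bounded below by a positive constant.

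For the cost bound, the key simplification is that the Milstein iteration~\eqref{eq:order1-scheme} applied to $\mathrm{d}X = \mathrm{d}W$ reduces to $\barW(t_{n+1}) = \barW(t_{n}) + \Delta W_{n}$, so $\barW$ coincides with $W$ at every $t_{n} \in \mesh{\Dt}$. Introducing the deterministic sub-mesh $s_{n} := n h^{2}$ that contains every realization of $\mesh{\Dt}$, the decomposition used at the start of the proof of Theorem~\ref{thm:cost-order1} gives
\[
\E{\cost{\barW}} \le \frac{T}{h} + \sum_{n=0}^{h^{-2}-1} \Prob{\{\nu > s_{n}\} \cap \{\Dt(\barW(s_{n})) = h^{2}\}}.
\]
On the set $A$ defined in~\eqref{eq:aSet_appendixProof}, every stride of $W$ within an interval of $\mesh{\Dt}$ is bounded by $\delta$; combined with $\barW(s_{n}) = W(t_{k})$ for the largest mesh point $t_{k} \le s_{n}$, this gives $|W(s_{n}) - \barW(s_{n})| \le \delta$ on $A$, and the non-exit event $\{\nu > s_{n}\}$ forces $W$ to stay within $(a-\delta, b+\delta)$ throughout $[0, s_{n}]$. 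Hence each summand is bounded above by
\[
\Prob{\{\tau_{\delta} > s_{n}\} \cap \{d(W(s_{n}), \partial D) \le 2\delta\}} + \Prob{A^{C}},
\]
where $\tau_{\delta}$ denotes the exit time of $W$ from $(a-\delta, b+\delta)$.

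The remaining step is to bound the first of these probabilities by splitting into contributions near each endpoint and integrating the explicit joint density. For the neighborhood of $b$,
\[
\Prob{\{W(s_{n}) \in (b - 2\delta, b)\} \cap \{M_{W}(s_{n}) < b + \delta\}} \le \int_{b - 2\delta}^{b} \int_{w}^{b+\delta} \rho_{(W(s_{n}), M_{W}(s_{n}))}(w, m) \, \mathrm{d}m \, \mathrm{d}w,
\]
which, by the uniform density bound (extended to the slightly enlarged region by the same calculus argument), is $\cO(\delta^{2}) = \cO(h \log(h^{-1}))$; the neighborhood of $a$ is handled symmetrically using the joint density of $(W(t), m_{W}(t))$. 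Summing the $h^{-2}$ contributions and using $\Prob{A^{C}} = \cO(h|\log h|^{-1/2})$ gives
\[
\E{\cost{\barW}} \le \frac{T}{h} + h^{-2} \cdot \cO(h \log(h^{-1})) + h^{-2} \cdot \cO\bigl(h|\log h|^{-1/2}\bigr) = \cO\bigl(h^{-1} |\log(h)|\bigr).
\]
The main obstacle I anticipate is the uniform density bound, specifically the regime $t^{*} < h$ where one must exploit that the exponential decay in $1/h$ overwhelms the polynomial blow-up $h^{-3/2}$; once that bound is in place the rest is bookkeeping that mirrors the proof of Theorem~\ref{thm:cost-order1}, with the pleasant simplification that no separate estimate for the strong error $\barW - W$ is required.
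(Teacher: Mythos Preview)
Your proposal is correct and follows essentially the same approach as the paper: both arguments hinge on the explicit joint density of $(W(t),M_W(t))$, a uniform bound for it over the critical strip, and the observation that the Milstein scheme is exact for the pure Wiener process so that $\barW(t_n)=W(t_n)$. The only cosmetic differences are that the paper splits the cost into $\E{\cost{\barW}\ind{A}}+\E{\cost{\barW}\ind{A^{C}}}$ upfront (bounding the second term by $T h^{-2}\Prob{A^{C}}$) and uses the integral form $\int_h^T\cdots\,\mathrm{d}t$, whereas you keep the sum over $s_n$ from Theorem~\ref{thm:cost-order1} and absorb $\Prob{A^{C}}$ term-by-term; one small slip is that your $w$-range should be $[b-2\delta,b+\delta)$ rather than $(b-2\delta,b)$, since on $A\cap\{\nu>s_n\}$ the process $W(s_n)$ may overshoot $b$ by up to $\delta$, but the area of the enlarged rectangle is still $\cO(\delta^{2})$ and the conclusion is unaffected.
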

    \begin{proof}
        Based on the size of the strides taken by the Wiener process, the computational cost can be decomposed as follows:
        \begin{align*}
            \E{\cost{\barW}} = \E{\cost{\barW} \ind{A^{C}}} + \E{\cost{\barW} \ind{A}} \eqqcolon I + II,
        \end{align*}
        where the set $A$ was defined in~\eqref{eq:aSet_appendixProof}.
        For term $I$, note that the upper bound on the number of time-steps that can be taken by the Wiener process before it exits the domain, using the order 1 adaptive time-stepping method, is $T/h^{2}$. From this, we obtain
        \begin{align*}
            I &= \E{\cost{\barW} \ind{A^{C}}} \leq \frac{T}{h^{2}} \E{\ind{A^{C}}} = \frac{T}{h^{2}} \Prob{A^{C}} = \cO(h^{-1} \abs{\log(h)}^{-1/2}) \, .
        \end{align*}
        For term $II$, we can write the computational cost as
        \begin{align*}
            II &= \E{\cost{\barW} \ind{A}} 
            \leq \frac{T}{h} + \int_{h}^{T} \frac{\E{\ind{\{ \nu > t \} \cap \{ \Dt(\barW(t)) = h^{2} \} \cap A}}}{h^{2}}\rdt\, .
        \end{align*}
        Note that
        \begin{multline*}
            \{ \nu > t \} \, \mathlarger{\cap} \, \{ \Dt(\barW(t)) = h^{2} \} \, \mathlarger{\cap} \, A \, \mathlarger{\subset}\\
            \Big(\{ M_{W}(t) \in [b - \delta, b + \delta) \} \, \mathlarger{\cap}\,  \{ W(t) \in [b-\delta, b) \} \Big) \, \mathlarger{\cup} \\ \Big(\{ m_{W}(t) \in (a - \delta, a + \delta] \}\, \mathlarger{\cap} \,\{ W(t) \in (a, a + \delta] \} \Big) \, .
        \end{multline*}
       For any $h < \min(e^{-1},T)$, $t \in [h, T]$ and $(w, m) \in [b-\delta, b) \times [b- \delta, b + \delta]$, it holds that
        \begin{align*}
            \rho_{(W(t), M_{W}(t))} (w, m) &= \sqrt{\frac{2}{\pi}} \frac{(2m - w)}{t^{3/2}} \exp \left( -\frac{(2m - w)^{2}}{2 t} \right) \\
            &\leq \sqrt{\frac{2}{\pi}} \frac{b + 3 \delta}{t^{3/2}} \exp \left( -\frac{(b - \delta)^{2}}{2 t} \right) \\
            &\leq \max_{t \in [h, T]} \sqrt{\frac{2}{\pi}} \frac{b + 6 e^{-1/2}}{t^{3/2}} \exp \left( -\frac{(b - 2e^{-1/2})^{2}}{2 t} \right) \\
            &=: C_{\rho}^b < \infty.
        \end{align*}
        Using a similar argument, we can bound the joint probability density function $\rho_{(W(t), m_{W}(t))}(w,m)$ uniformly
        for all $(w,m) \in (a,a+\delta] \times [a-\delta, a+\delta]$ and $t \in [h,T]$ by
        a positive constant $C_{\rho}^{a} < \infty$.
        From the above results, we obtain that
        \begin{align*}
            II &\leq \frac{T}{h} + \frac{C_{\rho}^{a} \delta^{2} (T - h)}{h^{2}} + \frac{C_{\rho}^{b} \delta^{2} (T - h)}{h^{2}} = \cO(h^{-1} \abs{\log(h)}) \, .
        \end{align*}
    \end{proof}

\subsubsection*{Acknowledgments} 

Thanks to Mike Giles for remarks and discussions on the manuscript, particularly for the very nice idea to use the absorbing-boundary Fokker--Planck equation in the proof of the computational cost of the adaptive methods. This improved our computational cost result considerably! 

We would also like to thank Snorre Harald Christensen, Kenneth Karlsen, Peter H.C. Pang and Nils Henrik Risebro for helpful discussions on parabolic and elliptic PDE.

    ~\\

\subsubsection*{Financial support}

This work was supported by the Mathematics Programme of the Trond Mohn Foundation.

    \bibliographystyle{amsplain}
    \bibliography{references.bib}
\end{document}